\begin{document}

\begin{center}
{\bf Linear differential  systems over the
quaternion skew field.}\end{center}

\begin{center}{ \textbf{\emph{Ivan I. Kyrchei} }\footnote{kyrchei@online.ua,\\ Pidstrygach Institute for Applied Problems of Mechanics and Mathematics of NAS of Ukraine,
 Ukraine} }\end{center}

\begin{abstract} A  basic theory on the  first order right and left linear quaternion differential systems (LQDS) is given systematic in this paper.  To proceed the theory of LQDS we adopt the theory of column-row determinants  recently  introduced by the author.
In this paper, the algebraic structure of their general solutions are established.  Determinantal representations of solutions of  systems with constant coefficient matrices and sources vectors are obtained in both cases when coefficient matrices are invertible and singular. In the  last case, we use determinantal representations of the quaternion Drazin inverse   within the framework of the theory of column-row determinants.

 Numerical examples to illustrate the main results are given.
\end{abstract}

\textbf{Keywords} Linear quaternion differential equation; Linear quaternion differential system; Quaternion matrix;
Drazin inverse;
Cramer rule; Noncommutative determinant; Column determinant; Row determinant.

\textbf{Mathematics subject classifications} 15A15; 16W10.

\section{Introduction}
\newtheorem{Corollary}{Corollary}[section]
\newtheorem{theorem}{Theorem}[section]
\newtheorem{lemma}{Lemma}[section]
 \theoremstyle{definition}
\newtheorem{ex}{Example}[section]
\newtheorem{definition}{Definition}[section]
\theoremstyle{remark}
\newtheorem{remark}{Remark}[section]
\newcommand{\rank}{\mathop{\rm rank}\nolimits}
\newtheorem{proposition}{Proposition}[section]

Recently, considerable attention has been  paid to  the quaternion differential equations  with real variables (QDEs) which
have many applications in fluid mechanics (e.g. \cite{gib,rou}), quantum mechanics (see e.g. \cite{adl,leo1}),  Frenet frame in differential geometry \cite{han}, the attitude orientation  and spatial rigid body dynamics \cite{udw,gup},
etc.
Though QDEs have  many applications,  theoretical aspects of QDEs has been considered in  few papers.

Leo and Ducati \cite{leo2}  solved some simple
second order quaternionic differential equations. Campos and Mawhin \cite{cam}  studied  the existence of periodic
solutions  for the quaternionic Riccati equation.

 \.{Z}o{\l}\c{a}dek  \cite{zol} given the complete description
of dynamics of the Riccati equation.
Wilczynski proved the existence of two periodic solutions of quaternionic Riccati equations in \cite{wil1}, and considered some sufficient conditions for the existence of at least
one periodic solution of the quaternionic polynomial equations in\cite{wil2,wil3}.
Gasull et al. \cite{gas} proved the existence of periodic orbits, homoclinic loops, invariant
tori for a one-dimensional quaternionic autonomous homogeneous differential equation.
Zhang \cite{zha} studied the global structure of the quaternion Bernoulli
equations. Recently, Cai and Kou \cite{cai} achieved the Laplace transform approach to solve the linear quaternion differential equations.

Even fewer papers are devoted to systems of the linear quaternion differential equations.
We note the three papers from the arXiv \cite{kou1,kou2,kou3}, where the authors studied the basic theory of
LQDS such as the fundamental matrix,  the algebraic structure of solutions, etc.
Through the non-commutativity of the quaternion algebra, the construction  of the basic theory of  linear systems of quaternion differential equations  has much more complicated in comparing to usual  linear systems. Difficulties arise already in determining the quaternion determinant.

It's well-known that there are several approaches to the definition of  a determinant of matrices  with
noncommutative entries (which are also defined as noncommutative
determinants). The first approach is an axiomatic defining.
Let ${\rm M}\left( {n,\bf \mathcal{R}} \right)$ be
  the ring of $n\times n$ matrices with entries in a
ring $ {\bf \mathcal{R}}$.
\begin{definition}\cite{as,co} \label{defin:axiom} Let a
functional ${\rm d}:{\rm M}\left( {n,\bf \mathcal{R}} \right) \to \bf \mathcal{R}$
satisfy the following three axioms.
\begin{enumerate}
\item  ${\rm d}\left( {{\rm {\bf A}}} \right) = 0$ if and only if the matrix ${\rm {\bf A}}$ is singular.
\item  ${\rm d}\left( {{\rm {\bf A}} \cdot
{\rm {\bf B}}} \right) = {\rm d}\left( {{\rm {\bf A}}} \right)
\cdot {\rm d}\left( {{\rm {\bf B}}} \right)$ for $\forall {\rm
{\bf B}}\in {\rm M}\left( {n,\bf R} \right)$.
\item  If the matrix ${\rm {\bf A}}'$ is obtained
from ${\rm {\bf A}}$ by adding a left-multiple of a row to another
row or a right-multiple of a column to another column, then ${\rm
d}\left( {{\rm {\bf A}}}\right)'={\rm d}\left( {{\rm {\bf
A}}}\right)$.
\end{enumerate}
Then  ${\rm d}$ is called the determinant
of  ${\rm {\bf A}}\in {\rm M}\left( {n,\bf R}
\right)$.\end{definition}
But it turns out \cite{as}, if a determinant functional satisfies
Axioms 1, 2,
 3, then it takes on a value in a commutative subset of the ring.
 The famous examples of such determinant are the
 determinants of Diedonn\'{e} \cite{di} and Study \cite{stu}.

Another way of defining is constructive.
A noncommutative determinant is constructed by similar to the usual determinant as the alternative sum of $n!$ products of entries of a matrix but by specifying a certain ordering of coefficients in each term. The Caley determinant \cite{ca} has the such type but without success in the implementation of any of the axioms.  Moore \cite{mo} was the first who
 achieved the fulfillment of the
main Axiom 1 by such definition of  a noncommutative determinant.
This is done  not for all square matrices over a ring but rather
only Hermitian matrices.  Later, Dyson \cite{dy} gave some
natural generalizations, described the theory in more modern
terms, and represented  Moore's determinant  in terms of permutations as follows,
\[
{\rm{Mdet}}\, {\rm {\bf A}} = {\sum\limits_{\sigma \in S_{n}}
{{\left| {\sigma} \right|}{a_{n_{11} n_{12}}  \cdot \ldots \cdot
a_{n_{1l_{1}}  n_{11}}  \cdot }} {a_{n_{21} n_{22}}} \cdot \ldots
\cdot {a_{n_{rl_{1}}  n_{r1}}}}.
\]
 The disjoint
cycle representation of the permutation $\sigma \in S_{n}$ is
written in the normal form,
\[ \sigma = \left( {n_{11} \ldots
n_{1l_{1}} }  \right)\left( {n_{21} \ldots n_{2l_{2}} }
\right)\ldots \left( {n_{r1} \ldots n_{rl_{r}} }  \right),\]
where $n_{i1}<n_{im}$  for all $i=1,...,r$ and
$m>1$, and
$ n_{11}>n_{21}>...>n_{r1}$.
Dyson  has emphasized to the need for expansion of the definition of Moore's
determinant to arbitrary square matrices.
Chen has offered the
following decision of this problem in \cite{ch}. He has
defined the determinant of a square matrix ${\rm {\bf
A}}=(a_{ij}) \in {\rm M}\left( {n,\mathbb{H}} \right)$ over the
quaternion skew field $\mathbb{H}$ by putting,
\[
\begin{array}{c}
  \det {\rm {\bf A}} = {\sum\limits_{\sigma \in S_{n}}  {\varepsilon
\left( {\sigma}  \right)a_{n_{1} i_{2}}  \cdot a_{i_{2} i_{3}}
\ldots \cdot a_{i_{s} n_{1}}  \cdot} } \ldots \cdot a_{n_{r}
k_{2}}  \cdot \ldots \cdot a_{k_{l} n_{r}}, \\
   \sigma = \left( {n_{1} i_{2} \ldots i_{s}}  \right)\ldots \left(
{n_{r} k_{2} \ldots k_{l}}  \right),\\
  n_{1} > i_{2} ,i_{3} ,\ldots ,i_{s} ;\ldots ;n_{r} > k_{2} ,k_{3}
,\ldots ,k_{l}, \\
  n = n_{1} > n_{2} > \ldots > n_{r} \ge 1.
\end{array}
\]
Chen's determinant does not satisfy Axiom 1 and can not be expanded  by cofactors along an arbitrary row or
column with the exception of the $n$th row. Through by using this determinant, a determinantal representation of an inverse matrix over the quaternion skew field has been obtained.

Returning to the main topic, we note that in \cite{kou1,kou2,kou3} it has been used  the determinants of Caley and    Chen, and the double determinant is used that is actually the determinant of Study. In  \cite{kou4}, the same authors has  already used  the determinant of the complex adjoint matrix $\chi_{A}$ to a quaternion matrix ${\bf A}$. This indicates the complexity of the successful choice of a quaternion determinant.

In this paper we explore linear systems of quaternion differential equations applying the theory of column-row determinants introduced by the author in \cite{kyr1,kyr2}. By this theory, for a quaternion square matrix of order $n$ is defining $n$ column determinants and  $n$ row determinants that give the complete expansion of Moore's
determinant from Hermitian to arbitrary square matrices.
Currently, the theory of column-row determinants   is active  developing.
Within the framework of  column-row determinants, determinantal representations of various kind of generalized inverses and  (generalized inverses) solutions    of quaternion matrix equations recently have been derived as by the author (see, e.g.\cite{kyr3,kyr4,kyr5,kyr6,kyr7}) so by other researchers (see, e.g.\cite{kl,song1,song2,song3}).

Throughout the paper, we denote the real number field by ${\rm
{\mathbb{R}}}$, the set of all $m\times n$ matrices over the
quaternion algebra
\[{\rm {\mathbb{H}}}=\{a_{0}+a_{1}i+a_{2}j+a_{3}k\,
|\,i^{2}=j^{2}=k^{2}=ijk=-1,\, a_{0}, a_{1}, a_{2}, a_{3}\in{\rm
{\mathbb{R}}}\}\]
by ${\rm {\mathbb{H}}}^{m\times n}$, the identity matrix with the appropriate size by $ {\bf I}$.  Let ${\rm M}\left( {n,{\rm {\mathbb{H}}}} \right)$ be the
ring of $n\times n$ quaternion matrices. For ${\rm {\bf A}}
 \in {\rm {\mathbb{H}}}^{n\times m}$, the symbols ${\rm {\bf A}}^{ *}$ stands for the conjugate transpose (Hermitian adjoint) matrix
of ${\rm {\bf A}}$.
 The matrix ${\rm {\bf A}} = \left( {a_{ij}}  \right) \in {\rm
{\mathbb{H}}}^{n\times n}$ is Hermitian if ${\rm {\bf
A}}^{ *}  = {\rm {\bf A}}$.

The main goals of the paper are to give a systematic basic theory on  linear systems of quaternion differential equations, establish the algebraic structure of their general solutions, provide  algorithms for finding their solutions.

 The paper is organized as follows. We start with some
basic concepts and also some new results from the theories of quaternion-valued differential equations,  row and column determinants, the theory on quaternion vector spaces and  eigenvalues   of quaternion  matrices, and give an algorithm obtaining eigenvalues of quaternion normal matrices   in Section 2. In Section 3, we  consider first order right and left linear quaternion differential systems, establish the algebraic structure of their general solutions and, in particular, for systems  with constant coefficients, and give determinantal representations of solutions of  systems with constant coefficient matrices and sources vectors when coefficient matrices are invertible or singular. In Sections 2 an 3,  we show
 numerical examples to illustrate the main results.

\section{Preliminaries.}

\subsection{Background for quaternion-valued differential equations (QDE)}
Consider a quaternion-valued function of real variable, ${\bf  f}:{\mathbb{R}}\rightarrow {\mathbb{H}}$, ($t \in {\mathbb{R}}$ is a real variable), such that  ${\bf{ f}}(t)= f_{0}(t)+  f_{1}(t){\bf i}+  f_{2}(t){\bf j}+  f_{3}(t){\bf k}.$
The first
derivative of a quaternionic function ${\bf  f}(t)$ with respect to the real variable $t$ denote by,
\[{{\bf  f}'}(t):=\frac{{\rm d}{\bf  f}(t)}{{\rm d}t}=\frac{{\rm d}f_{0}(t)}{{\rm d}t}+\frac{{\rm d}f_{1}(t)}{{\rm d}t}{\bf i}+
\frac{{\rm d}f_{2}(t)}{{\rm d}t}{\bf j}+\frac{{\rm d}f_{3}(t)}{{\rm d}t}{\bf k}.\]
It is easy to prove the following proposition on properties of the derivative of quaternionic functions.
 \begin{proposition} If ${\bf q} :{\mathbb{R}}\rightarrow {\mathbb{H}}$ and ${\bf r} :{\mathbb{R}}\rightarrow {\mathbb{H}}$ are differentiable,
then $({\bf q}\pm {\bf r})(t)$, ${\bf qr}(t)$ and, for any integer $n \geq 1$, ${\bf q}^n$  are differentiable, and
\begin{gather*}
        ({\bf q}\pm {\bf r})'(t)= {\bf q}'(t)\pm {\bf r}'(t),\\
        ({\bf  qr})'(t)= {\bf q}'(t){\bf r}(t)+  {\bf q}(t){\bf r}'(t),\\
        \left[{\bf q}^n(t)\right]'=\sum_{j=0}^{n-1}{\bf q}^j(t){\bf q}'(t){\bf q}^{n-j}(t).
      \end{gather*}
\end{proposition}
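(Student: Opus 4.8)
The plan is to reduce every assertion to ordinary single‑variable calculus through the component decomposition, while handling the non‑commutativity by never interchanging the order of two quaternion factors. For the additivity rule I would write $\mathbf{q}(t)=q_0(t)+q_1(t)\mathbf{i}+q_2(t)\mathbf{j}+q_3(t)\mathbf{k}$ and likewise $\mathbf{r}(t)=r_0(t)+\cdots$, note that the four real coordinates of $(\mathbf{q}\pm\mathbf{r})(t)$ are $q_m(t)\pm r_m(t)$ for $m=0,1,2,3$, and apply linearity of the real derivative coordinate‑by‑coordinate. This case is immediate, and it also records the fact — needed below — that differentiability of $\mathbf{q}$ is equivalent to differentiability of each $q_m$, hence forces continuity of $\mathbf{q}$.

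For the product rule I would work directly from the difference quotient rather than expanding the multiplication table. Using the telescoping identity
\[
\mathbf{q}(t+h)\mathbf{r}(t+h)-\mathbf{q}(t)\mathbf{r}(t)=\mathbf{q}(t+h)\bigl(\mathbf{r}(t+h)-\mathbf{r}(t)\bigr)+\bigl(\mathbf{q}(t+h)-\mathbf{q}(t)\bigr)\mathbf{r}(t),
\]
dividing by $h$, and letting $h\to0$: the first summand tends to $\mathbf{q}(t)\mathbf{r}'(t)$ (using continuity of $\mathbf{q}$ at $t$ and continuity of left multiplication by $\mathbf{q}(t)$, together with $\mathbf{r}'(t)=\lim_h h^{-1}(\mathbf{r}(t+h)-\mathbf{r}(t))$), and the second tends to $\mathbf{q}'(t)\mathbf{r}(t)$. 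Since at no stage is a factor moved past another, the result comes out precisely in the order $\mathbf{q}'(t)\mathbf{r}(t)+\mathbf{q}(t)\mathbf{r}'(t)$, and not $\mathbf{r}'(t)\mathbf{q}(t)+\cdots$.

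The power rule then follows by induction on $n\ge1$ from the product rule. The base case $n=1$ is the tautology $[\mathbf{q}]'=\mathbf{q}'$, and for the inductive step I would write $\mathbf{q}^{n+1}=\mathbf{q}^n\cdot\mathbf{q}$ and compute
\[
\bigl[\mathbf{q}^{n+1}\bigr]'=\bigl[\mathbf{q}^n\bigr]'\,\mathbf{q}+\mathbf{q}^n\mathbf{q}'=\Bigl(\sum_{j=0}^{n-1}\mathbf{q}^{\,j}\mathbf{q}'\mathbf{q}^{\,n-1-j}\Bigr)\mathbf{q}+\mathbf{q}^n\mathbf{q}'=\sum_{j=0}^{n}\mathbf{q}^{\,j}\mathbf{q}'\mathbf{q}^{\,n-j},
\]
the last term $\mathbf{q}^n\mathbf{q}'$ supplying the $j=n$ summand. (For the summand to have total degree $n$ in $\mathbf{q}$ it should be read as $\mathbf{q}^{\,j}\mathbf{q}'\mathbf{q}^{\,n-1-j}$, i.e. the exponent $n-j$ in the stated formula should be $n-1-j$.) The only genuine difficulty is conceptual rather than computational: one must resist the commutative reflex and keep track of on which side of $\mathbf{q}'$ the surviving powers of $\mathbf{q}$ sit — writing $\mathbf{q}^{n+1}$ as $\mathbf{q}\cdot\mathbf{q}^n$ instead yields the same sum but built up from the other end, a convenient consistency check. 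Everything else is the standard limit arithmetic of real analysis performed coordinatewise.
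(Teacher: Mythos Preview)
Your argument is correct and complete. The paper does not supply a proof of this proposition at all: it merely prefaces the statement with ``It is easy to prove the following proposition on properties of the derivative of quaternionic functions'' and moves on. Your approach --- componentwise linearity for the sum, the standard telescoping difference quotient for the product, and induction via $\mathbf{q}^{n+1}=\mathbf{q}^n\cdot\mathbf{q}$ for the power --- is exactly the natural verification the author presumably had in mind, with the non-commutativity handled by never permuting factors.

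You are also right to flag the exponent in the stated power formula: as printed, $\sum_{j=0}^{n-1}\mathbf{q}^{\,j}\mathbf{q}'\mathbf{q}^{\,n-j}$ has total $\mathbf{q}$-degree $n$ in addition to the $\mathbf{q}'$ factor, whereas $[\mathbf{q}^n]'$ must have $\mathbf{q}$-degree $n-1$. The check $n=2$ already exposes this ($\mathbf{q}'\mathbf{q}^2+\mathbf{q}\mathbf{q}'\mathbf{q}$ versus the correct $\mathbf{q}'\mathbf{q}+\mathbf{q}\mathbf{q}'$), and your corrected exponent $n-1-j$ is what the induction actually produces.
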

We need the   exponential of $q\in{\mathbb{H}}$ that can be defined by putting,
\begin{equation}\label{def:exp} e^{q}=\sum_{n=0}^{\propto} \frac{q^n}{n!}.\end{equation}
From the definition, we evidently have the following properties.
\begin{proposition}\label{prop:e_der}\begin{enumerate}
                     \item If $q, r\in{\mathbb{H}}$ are such that $q r=rq$, then $e^{qr}=e^{q}+e^{r}$.
                     \item If ${\bf q} :{\mathbb{R}}\rightarrow {\mathbb{H}}$ is differentiable and ${\bf q}'(t){\bf q}(t)= {\bf q}(t){\bf q}'(t)$, then
\[ \left[e^{{\bf q}(t)}\right]'= \left[e^{{\bf q}(t)}\right]{\bf q}'(t)\]
                   \end{enumerate}
\end{proposition}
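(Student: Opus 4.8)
The plan is to work directly from the power series \eqref{def:exp} and to reduce each statement to the commutative situation by invoking the stated hypotheses; all the genuine work is analytic (convergence and term-by-term differentiation), while the algebra is essentially forced.

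For part 1 (where the intended identity is plainly $e^{q+r}=e^{q}e^{r}$, the printed $e^{qr}=e^{q}+e^{r}$ being a misprint) I would first note that the quaternion norm is multiplicative, so $|q^{n}|\le|q|^{n}$ and the series $\sum_{n}q^{n}/n!$ is dominated by the real series $\sum_{n}|q|^{n}/n!=e^{|q|}$; hence it converges absolutely in the Banach algebra $\mathbb{H}$, and likewise for $r$. Absolute convergence licenses the Cauchy product and any rearrangement of terms. Since $qr=rq$, an immediate induction on $n$ gives the ordinary binomial expansion $(q+r)^{n}=\sum_{k=0}^{n}\binom{n}{k}q^{k}r^{n-k}$ in $\mathbb{H}$. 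Forming the Cauchy product and collecting terms of like total degree then yields
\[ e^{q}e^{r}=\sum_{n=0}^{\infty}\ \sum_{k=0}^{n}\frac{q^{k}r^{n-k}}{k!\,(n-k)!}=\sum_{n=0}^{\infty}\frac{1}{n!}\sum_{k=0}^{n}\binom{n}{k}q^{k}r^{n-k}=\sum_{n=0}^{\infty}\frac{(q+r)^{n}}{n!}=e^{q+r}. \]

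For part 2 I would fix an arbitrary compact interval $I\subset\mathbb{R}$, put $M=\max_{t\in I}|\mathbf{q}(t)|$ and $M'=\max_{t\in I}|\mathbf{q}'(t)|$, and observe that $\|\mathbf{q}^{n}(t)/n!\|\le M^{n}/n!$, while the term-by-term derivative, expanded via the product rule of the previous proposition as $[\mathbf{q}^{n}(t)]'=\sum_{j=0}^{n-1}\mathbf{q}^{j}(t)\,\mathbf{q}'(t)\,\mathbf{q}^{\,n-1-j}(t)$, is bounded in norm by $nM^{n-1}M'$. By the Weierstrass $M$-test both $\sum_{n}\mathbf{q}^{n}(t)/n!$ and $\sum_{n}[\mathbf{q}^{n}(t)]'/n!$ converge uniformly on $I$, so we may differentiate $e^{\mathbf{q}(t)}$ termwise: $[e^{\mathbf{q}(t)}]'=\sum_{n=1}^{\infty}\frac{1}{n!}[\mathbf{q}^{n}(t)]'$. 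Now invoke the hypothesis $\mathbf{q}'(t)\mathbf{q}(t)=\mathbf{q}(t)\mathbf{q}'(t)$: by induction it implies that $\mathbf{q}'(t)$ commutes with every power $\mathbf{q}^{m}(t)$, so each of the $n$ summands in $[\mathbf{q}^{n}(t)]'$ equals $\mathbf{q}^{n-1}(t)\mathbf{q}'(t)$, whence $[\mathbf{q}^{n}(t)]'=n\,\mathbf{q}^{n-1}(t)\mathbf{q}'(t)$. Reindexing $n\mapsto n-1$ and pulling the common right factor $\mathbf{q}'(t)$ out of the absolutely convergent sum gives
\[ [e^{\mathbf{q}(t)}]'=\sum_{n=1}^{\infty}\frac{n\,\mathbf{q}^{n-1}(t)\mathbf{q}'(t)}{n!}=\Bigl(\sum_{n=1}^{\infty}\frac{\mathbf{q}^{n-1}(t)}{(n-1)!}\Bigr)\mathbf{q}'(t)=e^{\mathbf{q}(t)}\mathbf{q}'(t). \]

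The only real obstacle is the analytic bookkeeping: justifying the Cauchy-product rearrangement in part 1 and the termwise differentiation in part 2 via uniform convergence on compacta. Algebraically there is nothing beyond the binomial identity and the collapse of the product-rule sum, and the single point that requires care is to use commutativity only between $\mathbf{q}(t)$ and $\mathbf{q}'(t)$ at one and the same $t$ — never between values of $\mathbf{q}$ at different times — which is exactly what the hypothesis supplies.
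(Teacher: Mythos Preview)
Your proof is correct. The paper itself does not actually prove this proposition: it merely introduces it with the line ``From the definition, we evidently have the following properties'' and moves on. So there is nothing to compare at the level of argument---you have supplied the analytic details (absolute convergence, Cauchy product, Weierstrass $M$-test for termwise differentiation) that the paper simply takes for granted. You are also right that the printed identity $e^{qr}=e^{q}+e^{r}$ is a misprint for $e^{q+r}=e^{q}e^{r}$; the paper uses it in that corrected form later (e.g.\ in the property $e^{{\bf A}s}e^{{\bf A}t}=e^{{\bf A}(s+t)}$).
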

Consider  the  following  statement that we shall need below.
\begin{proposition}\label{prop:e}Let $q_{l}$, $l=1,2,3$, is one from the fundamental quaternion units, i.e. $q_{l}\in \{i,j,k\}$ for all $l=1,2,3$.
Then $$e^{q_{l}}q_{m}=\begin{cases}q_{m}e^{q_{l}}, & \mbox{if } m=l,\\
q_{m}e^{-q_{l}}, & \mbox{if } m\neq l,
\end{cases}
$$ where $m, l \in \{1, 2, 3\}$.
\end{proposition}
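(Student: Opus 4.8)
The plan is to reduce the exponential $e^{q_l}$ to closed form and then exploit the multiplication table of the fundamental units. Since each $q_l\in\{i,j,k\}$ satisfies $q_l^2=-1$, the powers of $q_l$ cycle as $1,q_l,-1,-q_l,1,\dots$, so splitting the absolutely convergent series \eqref{def:exp} into its even- and odd-indexed terms yields the quaternionic analogue of Euler's formula, $e^{q_l}=\cos 1+q_l\sin 1$. The rearrangement is legitimate because the series converges absolutely and left or right multiplication by a fixed quaternion is $\mathbb{R}$-linear and continuous, hence commutes with the infinite sum. (Alternatively one may apply Proposition~\ref{prop:e_der} to ${\bf q}(t)=q_l t$, whose hypothesis ${\bf q}'(t){\bf q}(t)={\bf q}(t){\bf q}'(t)$ holds since $q_l q_l t=q_l^2 t=q_l q_l t$, obtain the scalar oscillator $[e^{q_l t}]'=e^{q_l t}q_l$, and evaluate at $t=1$; but the direct series manipulation is shorter.) Note also that the same formula with $q_l$ replaced by $-q_l$, legitimate because $(-q_l)^2=-1$ as well, gives $e^{-q_l}=\cos 1-q_l\sin 1$.

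With this closed form in hand, the case $m=l$ is immediate: here $q_m=q_l$ commutes with every power of itself, hence with the partial sums of \eqref{def:exp} and therefore with $e^{q_l}$, so $e^{q_l}q_m=q_m e^{q_l}$.

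For the case $m\neq l$, I would invoke the anticommutation of distinct fundamental units, $q_l q_m=-q_m q_l$, and compute
\[
e^{q_l}q_m=(\cos 1+q_l\sin 1)q_m=q_m\cos 1+(q_l q_m)\sin 1=q_m\cos 1-(q_m q_l)\sin 1=q_m(\cos 1-q_l\sin 1)=q_m e^{-q_l},
\]
where the last equality uses the closed form for $e^{-q_l}$ recorded above. This establishes both cases and completes the proof. There is essentially no genuine obstacle: the only point requiring care is the term-by-term handling of the series defining $e^{q_l}$, which is dispatched once and for all by absolute convergence; everything else is the elementary arithmetic of $\{i,j,k\}$, namely $q_l^2=-1$ and $q_l q_m=-q_m q_l$ for $l\neq m$.
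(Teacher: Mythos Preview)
Your proof is correct and follows essentially the same approach as the paper: both split the exponential series according to the parity of the exponent (you package this as the Euler-type identity $e^{q_l}=\cos 1+q_l\sin 1$, the paper leaves it as two sub-sums) and then push $q_m$ through using commutation with reals and anticommutation with a distinct unit. The only cosmetic difference is that you argue uniformly for all pairs via $q_l q_m=-q_m q_l$, whereas the paper checks the case $(q_l,q_m)=(i,j)$ explicitly and invokes symmetry for the rest.
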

\begin{proof} If $m=l$, let $q_{m}=i$, then evidently by Definition \ref{def:exp}, $i$ and $e^{i}$ are the commuting elements. The same is true for the other two quaternion units, $j$ and $k$.

If $m\neq l$, we put $q_{m}=j$, $q_{l}=i$. Then
\begin{multline*}e^{i}j= \sum_{n} \frac{i^n}{n!}j=\begin{cases}\sum_{k} \frac{i^{2k}}{(2k)!}j,\\
\sum_{k} \frac{i^{2k+1}}{(2k+1)!}j,
\end{cases}=\begin{cases}\sum_{k} \frac{(-1)^{k}}{(2k)!}j,\\
\sum_{k} \frac{(-1)^{k}i}{(2k+1)!}j,
\end{cases}=\\\begin{cases}j\sum_{k} \frac{i^{2k}}{(2k)!},\\
j\sum_{k} \frac{-i^{2k+1}}{(2k+1)!},\end{cases}
=j\sum_{n} \frac{(-1)^ni^n}{n!}=j e^{-i}.
\end{multline*}
Similarly, we obtain for the all other  combinations of the quaternion units.
\end{proof}
If $f_{i}(t)$ for all $i = 0,\ldots,  3$ is integrable on $[a,b]\subset {\mathbb{R}}$, then ${\bf f}(t)$ is integrable as well, and
\[
\int_a^b {\bf f}(t){{\rm d}t}=\int_a^b f_0(t){{\rm d}t}+\int_a^b f_1(t){{\rm d}t}\,{\bf i}+\int_a^b f_2(t){{\rm d}t}\,{\bf j}+\int_a^b f_3(t){{\rm d}t}\,{\bf k}.
\]
In \cite{gup}, the linear quaternion differential equations,
 \begin{equation}\label{eq:left_dif1}
{\bf q}'(t)={\bf a}(t){\bf q}(t),
\end{equation}
and
 \begin{equation}\label{eq:right_dif1}
{\bf q}'(t)={\bf q}(t){\bf a}(t),
\end{equation} with the initial condition $q(t_{0})=q_{0}$
have been considered and  the following proposition has been derived.
\begin{proposition}\label{pr:lin_eq}
Let ${\bf q}(t)={\bf \Phi}_l(t)q_{0}$ and ${\bf q}(t)=q_{0}{\bf \Phi}_r(t)$ be solutions of (\ref{eq:left_dif1}) and (\ref{eq:right_dif1}), respectively. If
 \begin{equation}\label{eq:prop_com}{\bf a}(t)\int_{t_{0}}^t {\bf a}(\tau){{\rm d}\tau}=\int_{t_{0}}^t {\bf a}(\tau){{\rm d}\tau}\,{\bf a}(t),\end{equation} then
 \begin{equation}\label{eq:sol_l_eqs}
{\bf \Phi}_l(t)={\bf \Phi}_r(t)=e^{\int_{t_{0}}^t {\bf a}(\tau){{\rm d}t}}.
\end{equation}
If  ${\bf a}$ is constant, then $\int_{t_{0}}^t {\bf a}{{\rm d}\tau}={\bf a}\left(t-t_{0}\right)$, and ${\bf \Phi}_l(t)={\bf \Phi}_r(t)=e^{{\bf a}\left(t-t_{0}\right)}.$\end{proposition}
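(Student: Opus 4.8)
The plan is to reduce everything to one observation: if $A(t):=\int_{t_{0}}^{t}{\bf a}(\tau)\,{\rm d}\tau$, then the hypothesis (\ref{eq:prop_com}) is literally the statement that $A'(t)={\bf a}(t)$ commutes with $A(t)$, which is precisely what is needed in order to differentiate $e^{A(t)}$. So I would first record that $A$ is differentiable with $A'(t)={\bf a}(t)$ (the fundamental theorem of calculus applied to each of the four real components) and that $A(t_{0})=0$. Then I would invoke Proposition \ref{prop:e_der}(2) with ${\bf q}(t)=A(t)$ to obtain $\bigl[e^{A(t)}\bigr]'=e^{A(t)}{\bf a}(t)$.

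Next I would upgrade this to commutation of ${\bf a}(t)$ with the whole exponential: since ${\bf a}(t)=A'(t)$ commutes with $A(t)$, it commutes with every partial sum $\sum_{m\le N}A(t)^{m}/m!$, and letting $N\to\infty$ in (\ref{def:exp}) (multiplication in $\mathbb{H}$ being continuous) gives ${\bf a}(t)e^{A(t)}=e^{A(t)}{\bf a}(t)$. Hence $y(t):=e^{A(t)}$ satisfies simultaneously $y'={\bf a}(t)y$ and $y'=y{\bf a}(t)$, together with $y(t_{0})=e^{0}=1$. On the other hand, substituting ${\bf q}(t)={\bf \Phi}_{l}(t)q_{0}$ into (\ref{eq:left_dif1}), using that $q_{0}$ is arbitrary and that the initial condition forces ${\bf \Phi}_{l}(t_{0})=1$, shows that ${\bf \Phi}_{l}$ solves $X'={\bf a}(t)X$, $X(t_{0})=1$; inserting ${\bf q}(t)=q_{0}{\bf \Phi}_{r}(t)$ into (\ref{eq:right_dif1}) shows that ${\bf \Phi}_{r}$ solves $X'=X{\bf a}(t)$, $X(t_{0})=1$. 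Viewing a quaternion-valued linear equation as a real linear ordinary differential system of dimension four, each of these initial value problems has a unique solution, so ${\bf \Phi}_{l}(t)={\bf \Phi}_{r}(t)=y(t)=e^{A(t)}$, which is (\ref{eq:sol_l_eqs}). For constant ${\bf a}$ one has $A(t)={\bf a}(t-t_{0})$ and (\ref{eq:prop_com}) holds automatically because ${\bf a}$ commutes with ${\bf a}(t-t_{0})$, yielding the stated specialization.

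I expect the only genuinely delicate points to be bookkeeping ones. The first is keeping the left/right ordering straight: Proposition \ref{prop:e_der}(2) is naturally a statement about the right equation, and turning it into a statement about the left equation really does require the commutation of ${\bf a}(t)$ with $e^{A(t)}$ rather than merely with $A(t)$, so the continuity/termwise argument above is not optional. The second is making explicit that a quaternion-valued linear ODE has a unique solution for given initial data; this is immediate once one passes to real coordinates, but it should be stated, since the proposition phrases its conclusion as an identity between the \emph{fundamental} solutions ${\bf \Phi}_{l}$ and ${\bf \Phi}_{r}$ and not merely as the assertion that $e^{A(t)}$ is \emph{a} solution.
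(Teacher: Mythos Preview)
Your argument is correct. You identify the key point cleanly: the commutation hypothesis (\ref{eq:prop_com}) is exactly $A'(t)A(t)=A(t)A'(t)$ for $A(t)=\int_{t_0}^t{\bf a}(\tau)\,{\rm d}\tau$, which via Proposition~\ref{prop:e_der}(2) lets you differentiate $e^{A(t)}$, and the further step of pushing commutation with $A(t)$ to commutation with $e^{A(t)}$ by continuity of multiplication is the right way to get both the left and right equations simultaneously. Invoking uniqueness for the associated four-dimensional real linear system to identify $e^{A(t)}$ with the fundamental solutions ${\bf\Phi}_l$ and ${\bf\Phi}_r$ is also correct and, as you note, necessary for the conclusion as stated.

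There is nothing to compare against in the paper itself: Proposition~\ref{pr:lin_eq} is not proved here but quoted from \cite{gup} as background. Your write-up supplies a self-contained proof using only tools already available in the paper (Proposition~\ref{prop:e_der} and the componentwise definition of the derivative and integral), so it could stand in place of the citation.
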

The similar result has been obtained in \cite{cam} as well. Moreover, in \cite{cam}, the following nonhomogeneous differential equations corresponding to (\ref{eq:left_dif1}) has been considered,
 \begin{equation}\label{eq:non_left_dif1}
{\bf q}'(t)={\bf a}(t){\bf q}(t)+{\bf f}(t),
\end{equation}
where ${\bf f} :[0, T]\rightarrow {\mathbb{H}}$ and ${\bf a} :[0, T]\rightarrow {\mathbb{H}}$. It has been shown if the condition (\ref{eq:prop_com}) is satisfied, then the solutions of (\ref{eq:non_left_dif1}) are given by
 \begin{equation*}{\bf q}(t)=e^{\int_{0}^t {\bf a}(\tau){{\rm d}\tau}}\left({\bf q}(0)+\int_{0}^t e^{\int_{0}^s \left( -{\bf a}(\tau)\right){{\rm d}\tau}}{\bf f}(s){{\rm d}s}\right),\;\;(t\in[0,T]).
\end{equation*}
In the special case when ${\bf a}$ is constant and ${\bf q}(0)=0$,  then the solutions of (\ref{eq:non_left_dif1}) are given by
 \begin{equation*}{\bf q}(t)=e^{{\bf a}t}\left(\int_{0}^t e^{ -{\bf a}s}{\bf f}(s){{\rm d}s}\right),\;\;(t\in[0,T]).
\end{equation*}
\begin{remark}The condition (\ref{eq:prop_com}) too narrow the set of solvable  linear quaternion differential equations. So, we propose to consider the two different chain rules for computing the derivative of the composition of two or more quaternion-valued functions, namely, left and right chain rules as follows.
\end{remark}
\begin{definition}\label{def:deriv}Let ${\bf q}_{i}$ are quaternion-valued differentiable functions for all $i=1,\ldots,n$ and ${\bf Q}(t)={\bf q}_{1}\left(\ldots\left({\bf q}_{n}\right)\ldots\right)(t)$. Then the left and right chain rules are defined by putting respectively  in the Lagrange notation,
\begin{gather*}
{\bf Q}'_{r}(t)={\bf q}'_{1..n}(t)={\bf q}'_{1}\left({\bf q}_{2..n}(t)\right)\cdot {\bf q}'_{2}\left({\bf q}_{3..n}(t)\right)\cdot\ldots\cdot{\bf q}'_{n}(t),\\
{\bf Q}'_{l}(t)={\bf q}'_{n..1}(t)={\bf q}'_{n}(t)\cdot\ldots\cdot{\bf q}'_{2}\left({\bf q}_{3..n}(t)\right)\cdot{\bf q}'_{1}\left({\bf q}_{2..n}(t)\right). \end{gather*}
\end{definition}
By Definition \ref{def:deriv}, considering the left linear equation (\ref{eq:left_dif1})
due only to the left chain rule and the right linear equation (\ref{eq:right_dif1})
 through  the right chain rule,
  we respectively obtain (\ref{eq:sol_l_eqs}) without a need to perform the condition (\ref{eq:prop_com}).
   We also have  change  the property of the derivative of the  quaternion exponent in Proposition \ref{prop:e_der} such that
\begin{equation*} \left[e^{{\bf q}(t)}\right]'_{l}= {\bf q}'(t)\left[e^{{\bf q}(t)}\right], \,\,\,
\left[e^{{\bf q}(t)}\right]'_{r}= \left[e^{{\bf q}(t)}\right]{\bf q}'(t),
 \end{equation*}
 without a need to commutativity ${\bf q}(t)$ and ${\bf q}'(t)$.

 \subsection{Elements of the theory of  column and row determinants}
Suppose $S_{n}$ is the symmetric group on the set $I_{n}=\{1,\ldots,n\}$. For  ${\rm {\bf A}}=(a_{ij}) \in {\rm
M}\left( {n,{\mathbb{H}}} \right)$ we define $n$ row determinants and $n$ column determinants as follows.
\begin{definition}\cite{kyr1}
 The $i$th row determinant of ${\rm {\bf A}}=(a_{ij}) \in {\rm
M}\left( {n,{\mathbb{H}}} \right)$ is defined  for all $i = {1,\ldots,n} $
by putting
 \begin{gather*}{\rm{rdet}}_{ i} {\rm {\bf A}} =
{\sum\limits_{\sigma \in S_{n}} {\left( { - 1} \right)^{n - r}({a_{i{\kern
1pt} i_{k_{1}}} } {a_{i_{k_{1}}   i_{k_{1} + 1}}} \ldots } } {a_{i_{k_{1}
+ l_{1}}
 i}})  \ldots  ({a_{i_{k_{r}}  i_{k_{r} + 1}}}
\ldots  {a_{i_{k_{r} + l_{r}}  i_{k_{r}} }}),\\
\sigma = \left(
{i\,i_{k_{1}}  i_{k_{1} + 1} \ldots i_{k_{1} + l_{1}} } \right)\left(
{i_{k_{2}}  i_{k_{2} + 1} \ldots i_{k_{2} + l_{2}} } \right)\ldots \left(
{i_{k_{r}}  i_{k_{r} + 1} \ldots i_{k_{r} + l_{r}} } \right),\end{gather*}
with
conditions $i_{k_{2}} < i_{k_{3}}  < \ldots < i_{k_{r}}$ and $i_{k_{t}}  <
i_{k_{t} + s} $ for $t = {2,\ldots,r} $ and $s ={1,\ldots,l_{t}} $.
\end{definition}
\begin{definition}\cite{kyr1}
The $j$th column determinant
 of ${\rm {\bf
A}}=(a_{ij}) \in {\rm M}\left( {n,{\mathbb{H}}} \right)$ is defined for
all $j ={1,\ldots,n} $ by putting
 \begin{gather*}{\rm{cdet}} _{{j}}\, {\rm {\bf A}} =
\sum\limits_{\tau \in S_{n}} \left( { - 1} \right)^{n - r}(a_{j_{k_{r}}
j_{k_{r} + l_{r}} } \ldots a_{j_{k_{r} + 1} i_{k_{r}} } ) \ldots  (a_{j\,
j_{k_{1} + l_{1}} }  \ldots  a_{ j_{k_{1} + 1} j_{k_{1}} }a_{j_{k_{1}}
j}),\\
\tau =
\left( {j_{k_{r} + l_{r}}  \ldots j_{k_{r} + 1} j_{k_{r}} } \right)\ldots
\left( {j_{k_{2} + l_{2}}  \ldots j_{k_{2} + 1} j_{k_{2}} } \right){\kern
1pt} \left( {j_{k_{1} + l_{1}}  \ldots j_{k_{1} + 1} j_{k_{1} } j}
\right), \end{gather*}
\noindent with conditions, $j_{k_{2}}  < j_{k_{3}}  < \ldots <
j_{k_{r}} $ and $j_{k_{t}}  < j_{k_{t} + s} $ for  $t = {2,\ldots,r} $
and $s = {1,\ldots,l_{t}}  $.
\end{definition}

Suppose ${\rm {\bf A}}_{}^{i{\kern 1pt} j} $ denotes the submatrix of
${\rm {\bf A}}$ obtained by deleting both the $i$th row and the $j$th
column. Let ${\rm {\bf a}}_{.j} $ be the $j$th column and ${\rm {\bf
a}}_{i.} $ be the $i$th row of ${\rm {\bf A}}$. Suppose ${\rm {\bf
A}}_{.j} \left( {{\rm {\bf b}}} \right)$ denotes the matrix obtained from
${\rm {\bf A}}$ by replacing its $j$th column with the column ${\rm {\bf
b}}$, and ${\rm {\bf A}}_{i.} \left( {{\rm {\bf b}}} \right)$ denotes the
matrix obtained from ${\rm {\bf A}}$ by replacing its $i$th row with the
row ${\rm {\bf b}}$.

The following theorem has a key value in the theory of the column and row
determinants.
\begin{theorem} \cite{kyr1}\label{theorem:
determinant of hermitian matrix} If ${\rm {\bf A}} = \left( {a_{ij}}
\right) \in {\rm M}\left( {n,{\rm {\mathbb{H}}}} \right)$ is Hermitian,
then ${\rm{rdet}} _{1} {\rm {\bf A}} = \cdots = {\rm{rdet}} _{n} {\rm {\bf
A}} = {\rm{cdet}} _{1} {\rm {\bf A}} = \cdots = {\rm{cdet}} _{n} {\rm {\bf
A}} \in {\rm {\mathbb{R}}}.$
\end{theorem}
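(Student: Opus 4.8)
\emph{Proof sketch (plan).} The plan is to reorganise each of the $2n$ sums ${\rm rdet}_{1}{\bf A},\dots,{\rm rdet}_{n}{\bf A},{\rm cdet}_{1}{\bf A},\dots,{\rm cdet}_{n}{\bf A}$ as a sum over \emph{orbits} of permutations, where two permutations lie in the same orbit when one is obtained from the other by replacing an arbitrary chosen subset of its disjoint cycles $c$ by the inverse cycles $c^{-1}$, and then to show that \emph{every orbit contributes one and the same real number to all $2n$ functionals}. Since the orbits partition $S_{n}$, and the sign $(-1)^{n-r}$ together with the multiset of cycle supports are orbit-invariant, summing over orbits then yields the equality of all $2n$ determinants together with their reality in a single stroke.

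First I would collect the elementary consequences of ${\bf A}^{*}={\bf A}$: the diagonal entries $a_{ii}$ are real; $a_{ji}=\overline{a_{ij}}$, so that $a_{ij}a_{ji}=a_{ij}\overline{a_{ij}}=|a_{ij}|^{2}\in{\mathbb R}$ and, more generally, $a_{pq}a_{qp}=a_{qp}a_{pq}=|a_{pq}|^{2}$; and, using $\overline{xy}=\overline{y}\,\overline{x}$ together with $\overline{a_{kl}}=a_{lk}$, for the closed chain along a cycle $(p\,q_{1}\,q_{2}\,\dots\,q_{m})$
\[
\overline{a_{p q_{1}}a_{q_{1}q_{2}}\cdots a_{q_{m}p}}=a_{p q_{m}}a_{q_{m}q_{m-1}}\cdots a_{q_{1}p},
\]
i.e. conjugating the chain of a cycle equals traversing the cycle the other way round. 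I would also use the two scalar facts $q+\overline{q}\in{\mathbb R}$ for every $q\in{\mathbb H}$, and ${\rm Re}(xy)={\rm Re}(yx)$ (both sides equal $x_{0}y_{0}-\vec x\!\cdot\!\vec y$), from which ${\rm Re}$ is invariant under cyclic rotation of a product; hence the real part of a closed chain does not depend on the base point chosen on the cycle, and neither do the scalars $a_{pp}$ and $|a_{pq}|^{2}$.

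The heart of the argument is then the following. Fix $\sigma\in S_{n}$ with cycles $c_{1},\dots,c_{r}$ and fix one functional, say ${\rm rdet}_{i}$; in the normal form the monomial of $\sigma$ is the product $\varphi_{1}\varphi_{2}\cdots\varphi_{r}$ of the closed chains of $c_{1},\dots,c_{r}$, arranged with the chain of the cycle through $i$ first (based at $i$) and the others based at their least elements and ordered by least element. Reversing any $c_{t}$ keeps it a cycle on the same support and keeps $i$ in its own cycle, so it leaves the permutation inside this same normal form with the same sign $(-1)^{n-r}$, and by the conjugation--reversal identity it replaces $\varphi_{t}$ by $\overline{\varphi_{t}}$ (a cycle of length $\le 2$ being its own reverse). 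By distributivity the sum over the whole orbit of $\sigma$ collapses to
\[
(-1)^{n-r}\;\prod_{\ell(c_{t})\ge 3}\bigl(\varphi_{t}+\overline{\varphi_{t}}\bigr)\;\prod_{\ell(c_{t})\le 2}\varphi_{t},
\]
a product of real numbers ($\varphi_{t}+\overline{\varphi_{t}}=2\,{\rm Re}\,\varphi_{t}$, $a_{pp}$, $|a_{pq}|^{2}$), hence real; in particular ${\rm rdet}_{i}{\bf A}\in{\mathbb R}$. Running the same computation for another index $j$, or for ${\rm cdet}_{j}$, changes only (i) which base point is used on each cycle — harmless, since $2\,{\rm Re}\,\varphi_{t}$ and the short-cycle scalars are base-point free; and (ii) the order in which the cycle-chains are multiplied (and, for ${\rm cdet}$, possibly the direction in which a cycle is traversed) — also harmless, since after (i) all factors in the displayed product are real and $\overline{\varphi_{t}}+\varphi_{t}=\varphi_{t}+\overline{\varphi_{t}}$. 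Thus every orbit contributes the identical real number to all $2n$ functionals, and summation over the disjoint orbits completes the argument.

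I expect the difficulty to be bookkeeping rather than conceptual. The one genuinely delicate point is that the orbit must be taken under reversal of \emph{each} cycle \emph{separately} — not merely under $\sigma\mapsto\sigma^{-1}$ — for otherwise the orbit sum would have the shape $\varphi_{1}\varphi_{2}+\overline{\varphi_{1}}\,\overline{\varphi_{2}}$, which need not be real; it is precisely the disjointness of the cycles' supports that makes reversing a single cycle a legal move inside the normal form. The remaining work is the careful verification that every normal-form convention (${\rm rdet}_{i}$ versus ${\rm cdet}_{j}$, and the various singled-out indices) indeed amounts only to reordering the cycle-chains and relabelling base points. A cofactor/Laplace-expansion induction on $n$ is an alternative, but is awkward because these determinants expand by cofactors only along the last row or column, so the combinatorial orbit argument above seems the cleaner route.
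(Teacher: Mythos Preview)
The paper does not prove this theorem here; it is quoted from the author's earlier work \cite{kyr1} and only its statement appears. So there is no in-paper argument to compare yours against. That said, your orbit-sum plan is essentially the standard route (and is in spirit the reason the Moore determinant of a Hermitian matrix is real): group permutations by independent reversal of each disjoint cycle, use the Hermitian identity $\overline{a_{pq_1}\cdots a_{q_m p}}=a_{pq_m}\cdots a_{q_1 p}$ to see that cycle reversal conjugates the corresponding chain, and observe that the orbit sum factors (by distributivity alone, no commutativity needed) into the real scalars $2\,\mathrm{Re}\,\varphi_t$, $a_{pp}$, $|a_{pq}|^{2}$.

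Your proof sketch is correct. The two points one would want spelled out in a full write-up are exactly the ones you flag: (a) that different elements of the same orbit have identical cycle supports, hence identical normal-form structure in \emph{every} $\mathrm{rdet}_i$ and $\mathrm{cdet}_j$, so reversing a single cycle really does replace just one $\varphi_t$ by $\overline{\varphi_t}$ with all other factors and their order unchanged; and (b) that the factorization $\sum_{\varepsilon}\prod_t \varphi_t^{(\varepsilon_t)}=\prod_t\bigl(\varphi_t+\overline{\varphi_t}\bigr)$ needs only the distributive law, after which all factors are central and the base-point and ordering discrepancies between the various $\mathrm{rdet}_i$, $\mathrm{cdet}_j$ evaporate via cyclic invariance of $\mathrm{Re}$. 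Your warning that pairing only $\sigma\leftrightarrow\sigma^{-1}$ would give $\varphi_1\varphi_2+\overline{\varphi_1}\,\overline{\varphi_2}$, which need not be real, is exactly the right diagnosis of why the finer orbit is required.

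One minor remark: your closing comment that these determinants ``expand by cofactors only along the last row or column'' applies to Chen's determinant, not to the row and column determinants defined here; for a Hermitian matrix one does have Laplace-type expansions along any row or column (this is how Theorem~\ref{th:inver_her} is obtained in \cite{kyr1,kyr2}), so an inductive proof is available too. Your combinatorial argument is nonetheless cleaner and self-contained.
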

Due to Theorem \ref{theorem:
determinant of hermitian matrix}, we can define the
determinant of a  Hermitian matrix ${\rm {\bf A}}\in {\rm M}\left( {n,{\rm
{\mathbb{H}}}} \right)$. By definition, we put
$\det {\rm {\bf A}}: = {\rm{rdet}}_{{i}}\,
{\rm {\bf A}} = {\rm{cdet}} _{{i}}\, {\rm {\bf A}}$
 for all $i ={1,\ldots,n}$.
The determinant of a Hermitian matrix has properties similar to a usual determinant. They are completely explored
in
 \cite{kyr1, kyr2}
 by its row and
column determinants and can be summarized by the following
theorems.
\begin{theorem}\label{theorem:row_combin} If the $i$th row of
a Hermitian matrix ${\rm {\bf A}}\in {\rm M}\left( {n,{\rm
{\mathbb{H}}}} \right)$ is replaced with a left linear combination
of its other rows, i.e. ${\rm {\bf a}}_{i.} = c_{1} {\rm {\bf
a}}_{i_{1} .} + \ldots + c_{k}  {\rm {\bf a}}_{i_{k} .}$, where $
c_{l} \in {{\rm {\mathbb{H}}}}$ for all $ l = {1,\ldots, k}$ and
$\{i,i_{l}\}\subset I_{n} $, then
\[
 {\rm{rdet}}_{i}\, {\rm {\bf A}}_{i \, .} \left(
{c_{1} {\rm {\bf a}}_{i_{1} .} + \ldots + c_{k} {\rm {\bf
a}}_{i_{k} .}}  \right) = {\rm{cdet}} _{i}\, {\rm {\bf A}}_{i\, .}
\left( {c_{1}
 {\rm {\bf a}}_{i_{1} .} + \ldots + c_{k} {\rm {\bf
a}}_{i_{k} .}}  \right) = 0.
\]
\end{theorem}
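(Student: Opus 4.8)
The plan is to reduce the statement about a Hermitian matrix whose $i$th row is a left linear combination of other rows to a statement about genuine rows of a (possibly non-Hermitian) matrix, and then exploit the expansion properties of row and column determinants already catalogued in \cite{kyr1,kyr2}. First I would treat the single-term case ${\rm {\bf a}}_{i.} = c\,{\rm {\bf a}}_{i_{1}.}$ with one coefficient $c \in {\mathbb{H}}$ and one index $i_{1}\neq i$, and then argue that the general case follows by linearity of ${\rm{rdet}}_{i}$ and ${\rm{cdet}}_{i}$ in the $i$th row. The reason this linearity is available is that in each term of either $i$th-row- or $i$th-column-determinant, the entries coming from the $i$th row (respectively $i$th column) appear at fixed positions in the ordered products — each such term is a sum over $\sigma\in S_n$ of a product in which exactly the factors indexed by $i$ on one side occur — so replacing ${\rm {\bf a}}_{i.}$ by $\sum_l c_l {\rm {\bf a}}_{i_l.}$ distributes over the sum, reducing to the one-term case with $k=1$.

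Next, for the one-term case, I would show that the matrix ${\rm {\bf A}}_{i.}(c\,{\rm {\bf a}}_{i_1.})$ has, in effect, two proportional rows — its $i$th row equals $c$ times its $i_1$th row — and invoke the corresponding known vanishing result for row/column determinants of a matrix with two left-proportional rows. Concretely: pair up permutations $\sigma$ and $\sigma' = \sigma\circ(i\;i_1)$ (the transposition swapping $i$ and $i_1$) and check that the two corresponding terms in the defining sum cancel. Here one must be careful, because the cycle-normalization conventions in the definitions of ${\rm{rdet}}_i$ and ${\rm{cdet}}_j$ (the ordering conditions $i_{k_2}<\cdots<i_{k_r}$, $i_{k_t}<i_{k_t+s}$, and the distinguished role of the index $i$ at the head of its cycle) mean the involution $\sigma\mapsto\sigma\circ(i\;i_1)$ does not act quite as naively as in the commutative case. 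The bookkeeping is exactly the content of the proportional-rows lemma for column–row determinants in \cite{kyr1}, so I would either cite it directly or reproduce its short pairing argument: when the $i$th and $i_1$th rows are equal up to the common left factor $c$, the sign $(-1)^{n-r}$ and the reordering of factors within the affected cycle conspire so that the $\sigma$-term and the $\sigma'$-term are negatives of one another, and the sum telescopes to $0$.

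Finally, I would record that the same pairing/cancellation works verbatim for ${\rm{cdet}}_i$, using the column-side definition, so that both ${\rm{rdet}}_i\,{\rm {\bf A}}_{i.}(\cdots)=0$ and ${\rm{cdet}}_i\,{\rm {\bf A}}_{i.}(\cdots)=0$, which is the claimed equality (both sides being $0$, the equality between them is trivial). The main obstacle I anticipate is precisely the interaction of the \emph{left} scalar multipliers $c_l$ with the noncommutative products in the defining sums: one cannot simply pull $c_l$ out front of a whole term as in the commutative theory, so the linearity-in-a-row step and the proportional-rows cancellation must be justified at the level of the cycle structure, tracking where the $i$-indexed factor sits in each monomial and verifying the $c_l$ can be factored to the left of that monomial. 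Since Theorem~\ref{theorem: determinant of hermitian matrix} is already in hand, I would lean on it only to note that for the underlying Hermitian ${\rm {\bf A}}$ all the row and column determinants agree before modification, but the vanishing after modification is an intrinsic consequence of the monomial structure and does not need Hermiticity of the modified matrix.
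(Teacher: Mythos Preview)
The paper does not prove this theorem at all; it is quoted from \cite{kyr1,kyr2} as part of a summary of the column--row determinant calculus, so there is no in-paper argument to compare against. I can only assess your proposal on its own merits.

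Your treatment of ${\rm rdet}_{i}$ is essentially right. In every monomial of ${\rm rdet}_{i}{\rm {\bf A}}$ the factor coming from row~$i$ is the leftmost factor $a_{i,i_{k_1}}$ of the distinguished first cycle, so replacing ${\rm {\bf a}}_{i.}$ by $\sum_l c_l{\rm {\bf a}}_{i_l.}$ genuinely distributes, the left scalars $c_l$ pull to the front, and one is reduced to the ``two equal rows'' vanishing lemma of \cite{kyr1}.

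The gap is in your claim that ``the same pairing/cancellation works verbatim for ${\rm cdet}_{i}$.'' It does not. In the definition of ${\rm cdet}_{i}$ the cycle containing $i$ is the \emph{rightmost} block of the product, and the factor from row~$i$, namely $a_{i,j_{k_1+l_1}}$, sits at the left end of that last block --- with all the other cycle blocks to its left. Consequently ${\rm cdet}_{i}$ is \emph{not} left-linear in the $i$th row (it is right-linear in the $i$th column), and after the substitution $a_{i,\cdot}\mapsto c\,a_{i_1,\cdot}$ the scalar $c$ is trapped in the interior of the monomial. Neither your linearity reduction nor the $\sigma\leftrightarrow\sigma\circ(i\;i_1)$ pairing survives this: for $\sigma$ and $\sigma'$ the position of $c$ inside the product differs, so the two terms are not manifest negatives of one another over~$\mathbb{H}$.

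The repair used in \cite{kyr1,kyr2} goes through the identity ${\rm cdet}_{i}{\rm {\bf B}}=\overline{{\rm rdet}_{i}{\rm {\bf B}}^{*}}$ together with the Hermiticity of the \emph{unmodified} ${\rm {\bf A}}$. One has $({\rm {\bf A}}_{i.}({\rm {\bf b}}))^{*}={\rm {\bf A}}_{.i}({\rm {\bf b}}^{*})$ with ${\rm {\bf b}}^{*}=\sum_l {\rm {\bf a}}_{.i_l}\overline{c_l}$, a \emph{right} linear combination of other columns; the column-side analogue (the companion of Theorem~\ref{theorem:colum_combin}) then gives $0$. So Hermiticity is not incidental here --- it is what converts the left row combination into a right column combination on the ${\rm cdet}$ side. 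Your last sentence, that ``the vanishing after modification \ldots does not need Hermiticity of the modified matrix,'' is true, but Hermiticity of the original ${\rm {\bf A}}$ is exactly what makes the ${\rm cdet}_{i}$ half go through.
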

\begin{theorem}\label{theorem:colum_combin} If the $j$th column of
 a Hermitian matrix ${\rm {\bf A}}\in
{\rm M}\left( {n,{\rm {\mathbb{H}}}} \right)$   is replaced with a
right linear combination of its other columns, i.e. ${\rm {\bf
a}}_{.j} = {\rm {\bf a}}_{.j_{1}}   c_{1} + \ldots + {\rm {\bf
a}}_{.j_{k}} c_{k} $, where $c_{l} \in{{\rm {\mathbb{H}}}}$ for
all $ l = {1,\ldots,k}$ and $\{j,j_{l}\}\subset J_{n}$, then
 \[{\rm{cdet}} _{j}\, {\rm {\bf A}}_{.j}
\left( {{\rm {\bf a}}_{.j_{1}} c_{1} + \ldots + {\rm {\bf
a}}_{.j_{k}}c_{k}} \right) ={\rm{rdet}} _{j} \,{\rm {\bf A}}_{.j}
\left( {{\rm {\bf a}}_{.j_{1}}  c_{1} + \ldots + {\rm {\bf
a}}_{.j_{k}}  c_{k}} \right) = 0.
\]
\end{theorem}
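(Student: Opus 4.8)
The plan is to deduce the statement from Theorem~\ref{theorem:row_combin} by passing to the conjugate transpose. Put ${\rm {\bf M}}:={\rm {\bf A}}_{.j}\!\left({\rm {\bf a}}_{.j_{1}}c_{1}+\cdots+{\rm {\bf a}}_{.j_{k}}c_{k}\right)$, so that ${\rm {\bf M}}$ agrees with ${\rm {\bf A}}$ in every column but the $j$th. First I would describe ${\rm {\bf M}}^{*}$ row by row: since $(M^{*})_{pq}=\overline{M_{qp}}$ and ${\rm {\bf A}}^{*}={\rm {\bf A}}$, every row of ${\rm {\bf M}}^{*}$ other than the $j$th is the corresponding row of ${\rm {\bf A}}$, while its $j$th row is the entrywise conjugate of the replacing column, read as a row. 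Using $\overline{xy}=\overline{y}\,\overline{x}$ together with $\overline{a_{pj_{l}}}=a_{j_{l}p}$, that $j$th row is $\overline{c}_{1}{\rm {\bf a}}_{j_{1}.}+\cdots+\overline{c}_{k}{\rm {\bf a}}_{j_{k}.}$. Hence ${\rm {\bf M}}^{*}={\rm {\bf A}}_{j.}\!\left(\overline{c}_{1}{\rm {\bf a}}_{j_{1}.}+\cdots+\overline{c}_{k}{\rm {\bf a}}_{j_{k}.}\right)$ arises from the Hermitian matrix ${\rm {\bf A}}$ by replacing its $j$th row with a left linear combination of its other rows, so Theorem~\ref{theorem:row_combin} yields ${\rm{rdet}}_{j}{\rm {\bf M}}^{*}={\rm{cdet}}_{j}{\rm {\bf M}}^{*}=0$.

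To carry this back to ${\rm {\bf M}}$ I would invoke the conjugation duality ${\rm{cdet}}_{j}({\rm {\bf N}}^{*})=\overline{{\rm{rdet}}_{j}{\rm {\bf N}}}$ and ${\rm{rdet}}_{j}({\rm {\bf N}}^{*})=\overline{{\rm{cdet}}_{j}{\rm {\bf N}}}$, valid for every ${\rm {\bf N}}\in{\rm M}(n,{\mathbb{H}})$. This is already recorded in \cite{kyr1,kyr2}, and it is the one place in the argument that repays a little care: it follows from the definitions of the two determinants once one observes that conjugating a monomial reverses the order of all its factors and returns each $(N^{*})_{pq}=\overline{N_{qp}}$ to an entry of ${\rm {\bf N}}$, so that the row-pattern cyclic words become column-pattern cyclic words (with each cycle read backwards, i.e. with the permutation replaced by its inverse, so $r$ and hence $(-1)^{n-r}$ are unchanged) and the order of the non-principal blocks is reversed consistently with the two normal forms. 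Applying these identities with ${\rm {\bf N}}={\rm {\bf M}}$ gives $\overline{{\rm{rdet}}_{j}{\rm {\bf M}}}={\rm{cdet}}_{j}({\rm {\bf M}}^{*})=0$ and $\overline{{\rm{cdet}}_{j}{\rm {\bf M}}}={\rm{rdet}}_{j}({\rm {\bf M}}^{*})=0$, that is ${\rm{cdet}}_{j}{\rm {\bf M}}={\rm{rdet}}_{j}{\rm {\bf M}}=0$, which is the assertion.

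For the column-determinant half there is also a more pedestrian route: in each monomial of ${\rm{cdet}}_{j}$ the only factor drawn from the $j$th column is the rightmost one, so ${\rm{cdet}}_{j}$ is right-linear in the $j$th column and ${\rm{cdet}}_{j}{\rm {\bf M}}=\sum_{l}\bigl({\rm{cdet}}_{j}{\rm {\bf A}}_{.j}({\rm {\bf a}}_{.j_{l}})\bigr)c_{l}$; each summand is the column determinant of a matrix whose $j$th and $j_{l}$th columns coincide, which one shows vanishes by a sign-reversing pairing of permutations that reroutes the cycle structure at the repeated index. It is precisely here that the Hermitian hypothesis enters and cannot be dropped — the products attached to two paired permutations agree only after invoking the relations $\overline{a_{pq}}=a_{qp}$ and the reality of the diagonal entries of ${\rm {\bf A}}$, and for a general matrix the vanishing already fails for $n=2$. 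This trick does not transfer verbatim to ${\rm{rdet}}_{j}$, whose $j$th-column factor lies in the interior of the monomial so the scalars $c_{l}$ cannot be pulled out; that asymmetry is why the conjugate-transpose argument above is the cleaner one.
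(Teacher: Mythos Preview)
Your argument is correct. The paper itself does not supply a proof of this theorem; it is quoted as one of several properties ``completely explored in \cite{kyr1,kyr2}'' and stated without demonstration. So there is no in-paper proof to compare against line by line.

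That said, your strategy---reducing the column statement to the row statement (Theorem~\ref{theorem:row_combin}) via the conjugate-transpose duality ${\rm cdet}_{j}({\rm {\bf N}}^{*})=\overline{{\rm rdet}_{j}{\rm {\bf N}}}$---is exactly the kind of symmetry argument one expects, and it is how the two theorems are paired in \cite{kyr1,kyr2}. Your identification of ${\rm {\bf M}}^{*}$ with ${\rm {\bf A}}_{j.}(\overline{c}_{1}{\rm {\bf a}}_{j_{1}.}+\cdots+\overline{c}_{k}{\rm {\bf a}}_{j_{k}.})$ is correct and uses the Hermitian hypothesis in the right place. The alternative ``pedestrian'' route you sketch for the ${\rm cdet}_{j}$ half (right-linearity in the $j$th column followed by a vanishing argument for a matrix with two equal columns) is also the direct approach taken in the original source, and your remark that the Hermitian hypothesis is genuinely needed there, and that the same trick does not transplant to ${\rm rdet}_{j}$ because the $j$th-column entry sits in the interior of each monomial, is accurate.
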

  The determinant of a Hermitian matrix also has a property of expansion along arbitrary rows and columns using  row and column determinants of submatrices. So, we were able to get determinantal representations of an inverse  as follows.
\begin{theorem}\cite{kyr1} \label{th:inver_her} If ${\rm {\bf A}}\in {\rm M}\left( {n,{\rm
{\mathbb{H}}}} \right)$ is Hermitian and
$\det {\rm {\bf A}} \ne 0,$
then there exist its unique  right  $(R{\rm {\bf A}})^{ - 1}$
and  unique left inverse  $(L{\rm {\bf A}})^{ - 1}$, where $\left( {R{\rm {\bf A}}} \right)^{ - 1} = \left( {L{\rm {\bf A}}}
\right)^{ - 1} = :{\rm {\bf A}}^{ - 1}$, and they possess the following determinantal representations,
\begin{multline*}\label{eq:inver_her_R}
  \left( {R{\rm {\bf A}}} \right)^{ - 1} = {\frac{{1}}{{\det {\rm
{\bf A}}}}}
\begin{bmatrix}
  R_{11} & R_{21} & \cdots & R_{n1}\\
  R_{12} & R_{22} & \cdots & R_{n2}\\
  \cdots & \cdots & \cdots& \cdots\\
  R_{1n} & R_{2n} & \cdots & R_{nn}
\end{bmatrix},
\\
  \left( {L{\rm {\bf A}}} \right)^{ - 1} = {\frac{{1}}{{\det {\rm
{\bf A}}}}}
\begin{bmatrix}
  L_{11} & L_{21} & \cdots & L_{n1} \\
  L_{12} & L_{22} & \cdots & L_{n2} \\
  \cdots & \cdots & \cdots & \cdots \\
  L_{1n} & L_{2n} & \cdots & L_{nn}
\end{bmatrix},
\end{multline*}
where $ \det {\rm {\bf A}} ={\sum\limits_{j = 1}^{n} {{a_{i{\kern
1pt} j} \cdot R_{i{\kern 1pt} j} } }}= {{\sum\limits_{i = 1}^{n}
{L_{i{\kern 1pt} j} \cdot a_{i{\kern 1pt} j}} }}$,
\begin{gather*}
 R_{i{\kern 1pt} j} = {\left\{ {{\begin{array}{*{20}c}
  - {\rm{rdet}}_{{j}}\, {\rm {\bf A}}_{{.{\kern 1pt} j}}^{{i{\kern 1pt} i}} \left( {{\rm
{\bf a}}_{{.{\kern 1pt} {\kern 1pt} i}}}  \right),& {i \ne j},
\hfill \\
 {\rm{rdet}} _{{k}}\, {\rm {\bf A}}^{{i{\kern 1pt} i}},&{i = j},
\hfill \\
\end{array}} } \right.}\,\,\,\,
L_{i{\kern 1pt} j} = {\left\{ {\begin{array}{*{20}c}
 -{\rm{cdet}} _{i}\, {\rm {\bf A}}_{i{\kern 1pt} .}^{j{\kern 1pt}j} \left( {{\rm {\bf a}}_{j{\kern 1pt}. } }\right),& {i \ne
j},\\
 {\rm{cdet}} _{k}\, {\rm {\bf A}}^{j\, j},& {i = j},
\\
\end{array} }\right.}
\end{gather*}
 and ${\rm {\bf A}}_{.{\kern 1pt} j}^{i{\kern 1pt} i} \left(
{{\rm {\bf a}}_{.{\kern 1pt} {\kern 1pt} i}}  \right)$ is obtained from
${\rm {\bf A}}$   by both replacing the $j$th column with the $i$th column and deleting  the $i$th row and column, ${\rm {\bf A}}_{i{\kern 1pt} .}^{jj} \left( {{\rm {\bf
a}}_{j{\kern 1pt} .} } \right)$ is obtained
 by replacing the $i$th row with the $j$th row, and then by
deleting both the $j$th row and  column, respectively, $I_{n} =  {\left\{ {1,\ldots ,n} \right\}}$, $k = \min {\left\{ {I_{n}}
\right.} \setminus {\left. {\{i\}} \right\}}$
 for all $ i,j =
{1,\ldots,n}$.
\end{theorem}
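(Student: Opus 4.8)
The plan is to verify by direct computation that the matrix displayed as $(R{\rm {\bf A}})^{-1}$ is a right inverse of ${\rm {\bf A}}$ and that the matrix displayed as $(L{\rm {\bf A}})^{-1}$ is a left inverse, and then to deduce uniqueness and the coincidence $(R{\rm {\bf A}})^{-1}=(L{\rm {\bf A}})^{-1}$ by a standard algebraic argument. By Theorem~\ref{theorem: determinant of hermitian matrix} we have $\det{\rm {\bf A}}\in{\rm {\mathbb{R}}}\setminus\{0\}$, so $(\det{\rm {\bf A}})^{-1}$ is a real, hence central, scalar and no question of sides arises when multiplying by it.

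First I would record the Laplace-type expansions of $\det{\rm {\bf A}}$ along the $i$th row and along the $j$th column, namely $\det{\rm {\bf A}}=\sum_{k=1}^{n}a_{ik}R_{ik}=\sum_{k=1}^{n}L_{kj}a_{kj}$ in the notation of the statement; these are exactly the expansion properties of the determinant of a Hermitian matrix mentioned just before the theorem and proved in \cite{kyr1,kyr2}. They immediately give that every diagonal entry of ${\rm {\bf A}}\cdot(R{\rm {\bf A}})^{-1}$ and of $(L{\rm {\bf A}})^{-1}\cdot{\rm {\bf A}}$ equals $1$, and they also establish the displayed formula $\det{\rm {\bf A}}=\sum_k a_{ik}R_{ik}=\sum_k L_{kj}a_{kj}$ asserted in the theorem.

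The core step is the ``alien cofactor'' identity: $\sum_{k=1}^{n}a_{sk}R_{tk}=0$ and $\sum_{k=1}^{n}L_{ks}a_{kt}=0$ whenever $s\neq t$. For the first, fix $s\neq t$ and let ${\rm {\bf B}}={\rm {\bf A}}_{t.}({\rm {\bf a}}_{s.})$ be the matrix obtained from ${\rm {\bf A}}$ by replacing its $t$th row with a copy of its $s$th row. Since $s\ne t$, the row ${\rm {\bf a}}_{s.}$ is a left linear combination of rows of ${\rm {\bf A}}$ other than the $t$th one (with the single coefficient $1$), so Theorem~\ref{theorem:row_combin} yields ${\rm{rdet}}_{t}{\rm {\bf B}}=0$. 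On the other hand, ${\rm {\bf B}}$ agrees with ${\rm {\bf A}}$ outside the $t$th row, and each submatrix entering the definition of $R_{tk}$, namely ${\rm {\bf A}}^{tt}$ and ${\rm {\bf A}}_{.k}^{tt}({\rm {\bf a}}_{.t})$, has the $t$th row deleted; hence these cofactors are literally unchanged when ${\rm {\bf A}}$ is replaced by ${\rm {\bf B}}$. Expanding ${\rm{rdet}}_{t}{\rm {\bf B}}$ along its $t$th row (the row expansion of a row determinant, \cite{kyr1}) therefore gives ${\rm{rdet}}_{t}{\rm {\bf B}}=\sum_{k}b_{tk}R_{tk}=\sum_{k}a_{sk}R_{tk}$, and comparison with the previous sentence forces $\sum_{k}a_{sk}R_{tk}=0$. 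The identity for the $L_{ks}$ is proved symmetrically, replacing the $t$th column of ${\rm {\bf A}}$ by a copy of its $s$th column and using Theorem~\ref{theorem:colum_combin} together with the column expansion of a column determinant.

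Combining the last two paragraphs shows ${\rm {\bf A}}\cdot(R{\rm {\bf A}})^{-1}={\rm {\bf I}}$ and $(L{\rm {\bf A}})^{-1}\cdot{\rm {\bf A}}={\rm {\bf I}}$. Then $(L{\rm {\bf A}})^{-1}=(L{\rm {\bf A}})^{-1}\bigl({\rm {\bf A}}\,(R{\rm {\bf A}})^{-1}\bigr)=\bigl((L{\rm {\bf A}})^{-1}{\rm {\bf A}}\bigr)(R{\rm {\bf A}})^{-1}=(R{\rm {\bf A}})^{-1}$, so the two displayed matrices coincide; writing ${\rm {\bf A}}^{-1}$ for this common matrix, it is a two-sided inverse, and any right inverse ${\rm {\bf C}}$ satisfies ${\rm {\bf C}}={\rm {\bf A}}^{-1}{\rm {\bf A}}{\rm {\bf C}}={\rm {\bf A}}^{-1}$ while any left inverse ${\rm {\bf C}}$ satisfies ${\rm {\bf C}}={\rm {\bf C}}{\rm {\bf A}}{\rm {\bf A}}^{-1}={\rm {\bf A}}^{-1}$, which gives uniqueness. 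The step I expect to be the main obstacle is the alien-cofactor identity: one must apply Theorem~\ref{theorem:row_combin} in its exact form (only the original matrix is assumed Hermitian, while ${\rm {\bf B}}$ is not), and one must separately invoke the row-expansion of ${\rm{rdet}}_{t}$ for the non-Hermitian matrix ${\rm {\bf B}}$, together with the observation that the cofactors $R_{tk}$ depend only on submatrices from which the $t$th row has been removed and are therefore the same for ${\rm {\bf A}}$ and ${\rm {\bf B}}$.
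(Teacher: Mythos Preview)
The paper does not give its own proof of this theorem; it is quoted from \cite{kyr1}, and the surrounding text only indicates that the result follows from the expansion of $\det{\rm {\bf A}}$ along arbitrary rows and columns via row and column determinants of submatrices. Your argument is exactly this standard approach and is correct: the Laplace expansions give the diagonal entries of ${\rm {\bf A}}\cdot(R{\rm {\bf A}})^{-1}$ and $(L{\rm {\bf A}})^{-1}\cdot{\rm {\bf A}}$, Theorems~\ref{theorem:row_combin} and~\ref{theorem:colum_combin} supply the alien-cofactor vanishing, and the remaining algebra is routine. Your worry about ${\rm {\bf B}}$ not being Hermitian is unfounded on both counts you raise: Theorem~\ref{theorem:row_combin} is stated precisely for the matrix obtained from a Hermitian ${\rm {\bf A}}$ by such a row replacement, and the expansion of ${\rm{rdet}}_{t}$ along its $t$th row holds for any quaternionic matrix directly from the definition of ${\rm{rdet}}_{t}$, since every monomial begins with an entry from row $t$.
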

\begin{theorem}\cite{kyr1}\label{theorem:crit_depen} The right linearly dependence of
columns of ${\rm {\bf A}} \in {{\rm {\mathbb{H}}}}^{m\times n}$ or
the left linearly dependence of rows of ${\rm {\bf A}}^{ *} $ is
the necessary and sufficient condition for $\det {\rm {\bf A}}^{
*} {\rm {\bf A}} = 0.$
\end{theorem}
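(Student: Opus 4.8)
The plan is to prove the equivalence in two halves, after first reducing the two stated hypotheses to one. Note that $\mathbf{A}^{*}\mathbf{A}$ is Hermitian, so $\det\mathbf{A}^{*}\mathbf{A}$ is well defined and real by Theorem~\ref{theorem: determinant of hermitian matrix}. The equivalence of the two hypotheses is purely formal: if $\mathbf{a}_{.j}=\sum_{l\ne j}\mathbf{a}_{.l}c_{l}$ is a nontrivial right linear combination of the columns of $\mathbf{A}$, then passing to conjugate transposes gives $(\mathbf{A}^{*})_{j.}=\sum_{l\ne j}\bar{c}_{l}\,(\mathbf{A}^{*})_{l.}$, a nontrivial left linear combination of the rows of $\mathbf{A}^{*}$, and conversely. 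Hence it suffices to relate the condition $\det\mathbf{A}^{*}\mathbf{A}=0$ to the right linear dependence of the columns of $\mathbf{A}$.

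For the ``if'' direction, suppose some column of $\mathbf{A}$ satisfies $\mathbf{a}_{.j}=\sum_{l\ne j}\mathbf{a}_{.l}c_{l}$. Left-multiplying this identity by $\mathbf{A}^{*}$ and using $\mathbf{A}^{*}\mathbf{a}_{.l}=(\mathbf{A}^{*}\mathbf{A})_{.l}$ shows that the $j$th column of $\mathbf{B}:=\mathbf{A}^{*}\mathbf{A}$ is the very same right combination of the other columns of $\mathbf{B}$, i.e. $\mathbf{B}=\mathbf{B}_{.j}\bigl(\sum_{l\ne j}\mathbf{b}_{.l}c_{l}\bigr)$. As $\mathbf{B}$ is Hermitian, Theorem~\ref{theorem:colum_combin} yields $\mathrm{cdet}_{j}\mathbf{B}=0$, and therefore $\det\mathbf{A}^{*}\mathbf{A}=0$.

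For the ``only if'' direction I would argue by contraposition. If the columns of $\mathbf{A}$ are right linearly independent, then $\mathbf{A}\mathbf{x}\ne\mathbf{0}$ for every nonzero $\mathbf{x}\in\mathbb{H}^{n}$, so $\mathbf{x}^{*}\mathbf{A}^{*}\mathbf{A}\mathbf{x}=(\mathbf{A}\mathbf{x})^{*}(\mathbf{A}\mathbf{x})>0$; in particular $\mathbf{A}^{*}\mathbf{A}\mathbf{x}\ne\mathbf{0}$, so the Hermitian matrix $\mathbf{A}^{*}\mathbf{A}$ is nonsingular. The remaining step — deducing $\det\mathbf{A}^{*}\mathbf{A}\ne0$ from nonsingularity — is the one I expect to be the genuine obstacle, since Theorem~\ref{th:inver_her} supplies only the converse implication and the row/column determinant is not multiplicative in general, so no direct product formula is available. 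I would close this gap in one of two ways: either by recalling that on Hermitian matrices the determinant of Theorem~\ref{theorem: determinant of hermitian matrix} coincides with Moore's determinant \cite{mo}, which satisfies Axiom~1 of Definition~\ref{defin:axiom} and hence vanishes exactly on singular Hermitian matrices; or, more self-containedly, by diagonalizing $\mathbf{A}^{*}\mathbf{A}=\mathbf{U}^{*}\mathbf{D}\mathbf{U}$ with $\mathbf{U}$ unitary and $\mathbf{D}$ real diagonal, and using the behaviour of row and column determinants under elementary operations (Theorems~\ref{theorem:row_combin}--\ref{theorem:colum_combin}) to compute $\det\mathbf{A}^{*}\mathbf{A}=\prod_{i}d_{ii}$, each factor $d_{ii}=(\mathbf{A}\mathbf{v}_{i})^{*}(\mathbf{A}\mathbf{v}_{i})$ (with $\mathbf{v}_{i}$ the $i$th column of $\mathbf{U}^{*}$) being positive by the independence hypothesis. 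Either route gives $\det\mathbf{A}^{*}\mathbf{A}\ne0$, which contradicts the assumption and completes the proof.
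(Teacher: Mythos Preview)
The paper does not actually prove this theorem; it is quoted from \cite{kyr1} without argument, so there is no proof in the paper to compare your attempt against. I can only assess your proposal on its own merits.

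Your reduction of the two hypotheses to one and your ``if'' direction are clean and correct: once a column of $\mathbf{A}$ is a right combination of the others, the same relation holds among the columns of the Hermitian matrix $\mathbf{A}^{*}\mathbf{A}$, and Theorem~\ref{theorem:colum_combin} then forces $\det\mathbf{A}^{*}\mathbf{A}=0$.

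The ``only if'' direction is where you still have work to do, and you correctly identify the gap. Your first proposed fix, via Moore's determinant and Axiom~1, is legitimate but imports a nontrivial classical fact from outside the paper. Your second fix, however, does not go through as written: Theorems~\ref{theorem:row_combin} and~\ref{theorem:colum_combin} only assert that replacing a row or column of a Hermitian matrix by a combination of the \emph{other} rows or columns yields determinant zero; they give no invariance of the determinant under unitary similarity and no mechanism to reduce $\det(\mathbf{U}^{*}\mathbf{D}\mathbf{U})$ to $\prod_i d_{ii}$. The tool in the paper that actually delivers this is Theorem~\ref{theor:char_polin}: the characteristic polynomial $p_{\mathbf{A}^{*}\mathbf{A}}(t)=\det(t\mathbf{I}-\mathbf{A}^{*}\mathbf{A})$ has constant term $(-1)^{n}\det\mathbf{A}^{*}\mathbf{A}$, and its roots are the (real) eigenvalues $d_{ii}$, whence $\det\mathbf{A}^{*}\mathbf{A}=\prod_i d_{ii}$. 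Combined with your observation that each $d_{ii}=\lVert \mathbf{A}\mathbf{v}_i\rVert^{2}>0$ under the independence hypothesis, this closes the argument. So replace the appeal to Theorems~\ref{theorem:row_combin}--\ref{theorem:colum_combin} by an appeal to Theorem~\ref{theor:char_polin} and the diagonalization route becomes sound.
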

\begin{theorem}\label{theorem:equal_lef_righ_her} If ${\rm {\bf A}} \in {\rm M}\left(
{n,{{\rm {\mathbb{H}}}}} \right)$, then $\det {\rm {\bf A}}{\rm
{\bf A}}^{ *} = \det {\rm {\bf A}}^{ * }{\rm {\bf A}}$.
\end{theorem}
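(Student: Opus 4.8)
\emph{Proof strategy.} The plan is to first record that both $\mathbf{A}\mathbf{A}^{*}$ and $\mathbf{A}^{*}\mathbf{A}$ are Hermitian, so by Theorem~\ref{theorem: determinant of hermitian matrix} the quantities $\det\mathbf{A}\mathbf{A}^{*}$ and $\det\mathbf{A}^{*}\mathbf{A}$ are well defined real numbers; then to treat separately the case where they vanish and the case where they do not.

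For the vanishing case I would invoke Theorem~\ref{theorem:crit_depen}. Applied to $\mathbf{A}$ it gives that $\det\mathbf{A}^{*}\mathbf{A}=0$ precisely when the columns of $\mathbf{A}$ are right linearly dependent, i.e.\ when $\mathbf{A}$ fails to be left invertible. Applied instead to $\mathbf{A}^{*}$, and using $(\mathbf{A}^{*})^{*}=\mathbf{A}$, it gives that $\det\mathbf{A}\mathbf{A}^{*}=0$ precisely when the columns of $\mathbf{A}^{*}$ are right linearly dependent, equivalently when the rows of $\mathbf{A}$ are left linearly dependent, i.e.\ when $\mathbf{A}$ fails to be right invertible. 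Since for a square matrix over the division ring $\mathbb{H}$ left invertibility, right invertibility and invertibility all coincide, $\det\mathbf{A}\mathbf{A}^{*}=0$ if and only if $\det\mathbf{A}^{*}\mathbf{A}=0$, which settles the statement when $\mathbf{A}$ is singular.

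It remains to prove equality when $\mathbf{A}$ is invertible, and I expect this to be the main obstacle. In that case both $\mathbf{A}\mathbf{A}^{*}$ and $\mathbf{A}^{*}\mathbf{A}$ are positive definite Hermitian matrices (from $\mathbf{x}^{*}\mathbf{A}\mathbf{A}^{*}\mathbf{x}=\|\mathbf{A}^{*}\mathbf{x}\|^{2}$ and $\mathbf{x}^{*}\mathbf{A}^{*}\mathbf{A}\mathbf{x}=\|\mathbf{A}\mathbf{x}\|^{2}$ together with invertibility), and moreover $\mathbf{A}^{*}\mathbf{A}=\mathbf{A}^{-1}(\mathbf{A}\mathbf{A}^{*})\mathbf{A}$, so the two matrices are similar. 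I would then pass to the $2n\times 2n$ complex adjoint (complex representation) $\chi$, an injective ring homomorphism with $\chi(\mathbf{A}^{*})=(\chi(\mathbf{A}))^{*}$ and, for any Hermitian $\mathbf{H}\in\mathrm{M}(n,\mathbb{H})$, $\det_{\mathbb{C}}\chi(\mathbf{H})=(\det\mathbf{H})^{2}$. From $\chi(\mathbf{A}\mathbf{A}^{*})=(\chi(\mathbf{A}))(\chi(\mathbf{A}))^{*}$ and $\chi(\mathbf{A}^{*}\mathbf{A})=(\chi(\mathbf{A}))^{*}(\chi(\mathbf{A}))$ one obtains $\det_{\mathbb{C}}\chi(\mathbf{A}\mathbf{A}^{*})=|\det_{\mathbb{C}}\chi(\mathbf{A})|^{2}=\det_{\mathbb{C}}\chi(\mathbf{A}^{*}\mathbf{A})$, hence $(\det\mathbf{A}\mathbf{A}^{*})^{2}=(\det\mathbf{A}^{*}\mathbf{A})^{2}$.

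Finally I would fix the sign. Both numbers are nonzero by the singular case already handled, so it suffices to show that the determinant of a positive definite Hermitian quaternion matrix is positive; this follows by a connectedness argument, since the set of positive definite Hermitian matrices in $\mathrm{M}(n,\mathbb{H})$ is path connected, $\det$ is a real polynomial in the entries and hence continuous, it is nowhere zero there by Theorem~\ref{theorem:crit_depen}, and $\det\mathbf{I}=1>0$. Thus $\det\mathbf{A}\mathbf{A}^{*}$ and $\det\mathbf{A}^{*}\mathbf{A}$ are both positive, hence equal. The delicate points I anticipate are the auxiliary identity $\det_{\mathbb{C}}\chi(\mathbf{H})=(\det\mathbf{H})^{2}$ for Hermitian $\mathbf{H}$, which ties the column--row (Moore) determinant to the genuine complex determinant, and the positivity of $\det$ on positive definite matrices; a variant that avoids the squaring is to use instead that similar Hermitian matrices share the same multiset of real eigenvalues and that the determinant of a Hermitian matrix is the product of its eigenvalues.
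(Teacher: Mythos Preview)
The paper does not actually supply a proof of this theorem: it is stated in the preliminaries section as background from the author's earlier work on column--row determinants (references \cite{kyr1,kyr2}), with no argument given. So there is no ``paper's own proof'' to compare against directly.

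That said, your proof is correct. The singular case via Theorem~\ref{theorem:crit_depen} is clean and exactly the right reduction. For the invertible case, the route through the complex adjoint $\chi$ works, but as you yourself flag, it leans on the identity $\det_{\mathbb{C}}\chi(\mathbf{H})=(\det\mathbf{H})^{2}$ for Hermitian $\mathbf{H}$, which is true but is an external fact not developed in this paper; the positivity/connectedness argument to resolve the sign is fine. Your alternative at the end --- observe that $\mathbf{A}^{*}\mathbf{A}=\mathbf{A}^{-1}(\mathbf{A}\mathbf{A}^{*})\mathbf{A}$ are similar Hermitian matrices, hence share the same multiset of real eigenvalues, and then invoke Theorem~\ref{theor:char_polin} to identify $\det$ of a Hermitian matrix with the product of its eigenvalues --- is both shorter and stays entirely within the machinery the paper sets up (characteristic polynomial, real eigenvalues of Hermitian matrices). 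I would promote that variant to the main argument and drop the complex-adjoint detour.
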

\begin{definition}For ${\rm {\bf A}} \in {\rm M}\left(
{n,{{\rm {\mathbb{H}}}}} \right)$, the double determinant of  ${\bf A}$ is defined by putting,
${\rm ddet}\,{\bf A}:= \det {\rm {\bf A}}{\rm
{\bf A}}^{ *} = \det {\rm {\bf A}}^{ * }{\rm {\bf A}}.$
\end{definition}
\begin{theorem} \label{theorem:deter_inver}\cite{kyr2} The necessary and sufficient condition of invertibility
of  ${\rm {\bf A}} \in {\rm M}(n,{{\rm {\mathbb{H}}}})$ is
${\rm{ddet}} {\rm {\bf A}} \ne 0$. Then there exists ${\rm {\bf
A}}^{ - 1} = \left( {L{\rm {\bf A}}} \right)^{ - 1} = \left(
{R{\rm {\bf A}}} \right)^{ - 1}$, where
\begin{equation*}
 \left( {L{\rm {\bf A}}} \right)^{ - 1}
=\left( {{\rm {\bf A}}^{ *}{\rm {\bf A}} } \right)^{ - 1}{\rm {\bf
A}}^{ *} ={\frac{{1}}{{{\rm{ddet}}{ \rm{\bf A}} }}}
\begin{bmatrix}
  {\mathbb{L}} _{11} & {\mathbb{L}} _{21}& \ldots & {\mathbb{L}} _{n1} \\
  {\mathbb{L}} _{12} & {\mathbb{L}} _{22} & \ldots & {\mathbb{L}} _{n2} \\
  \ldots & \ldots & \ldots & \ldots \\
 {\mathbb{L}} _{1n} & {\mathbb{L}} _{2n} & \ldots & {\mathbb{L}} _{nn}
\end{bmatrix}
\end{equation*}
\begin{equation*} \left( {R{\rm {\bf A}}} \right)^{ - 1} = {\rm {\bf
A}}^{ *} \left( {{\rm {\bf A}}{\rm {\bf A}}^{ *} } \right)^{ - 1}
= {\frac{{1}}{{{\rm{ddet}}{ \rm{\bf A}}^{ *} }}}
\begin{bmatrix}
 {\mathbb{R}} _{11} & {\mathbb{R}} _{21} &\ldots & {\mathbb{R}} _{n1} \\
 {\mathbb{R}} _{12} & {\mathbb{R}} _{22} &\ldots & {\mathbb{R}} _{n2}  \\
 \ldots  & \ldots & \ldots & \ldots \\
 {\mathbb{R}} _{1n} & {\mathbb{R}} _{2n} &\ldots & {\mathbb{R}} _{nn}
\end{bmatrix}
\end{equation*}
and \[{\mathbb{L}} _{ij} = {\rm{cdet}} _{j} ({\rm {\bf
A}}^{\ast}{\rm {\bf A}})_{.j} \left( {{\rm {\bf a}}_{.{\kern 1pt}
i}^{ *} } \right), \,\,\,{\mathbb{R}} _{\,{\kern 1pt} ij} =
{\rm{rdet}}_{i} ({\rm {\bf A}}{\rm {\bf A}}^{\ast})_{i.} \left(
{{\rm {\bf a}}_{j.}^{ *} }  \right),\] for all $i,j =
{1,\ldots,n}.$
\end{theorem}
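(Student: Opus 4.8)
The plan is to reduce the statement to the Hermitian case already settled in Theorem~\ref{th:inver_her}, applied to the two auxiliary Hermitian matrices ${\rm {\bf A}}^{*}{\rm {\bf A}}$ and ${\rm {\bf A}}{\rm {\bf A}}^{*}$. First I note that both of these are Hermitian, so their determinants are well defined by Theorem~\ref{theorem: determinant of hermitian matrix}, and they coincide by Theorem~\ref{theorem:equal_lef_righ_her}; hence ${\rm{ddet}}\,{\rm {\bf A}}$ is well defined, ${\rm{ddet}}\,{\rm {\bf A}} = \det{\rm {\bf A}}^{*}{\rm {\bf A}} = \det{\rm {\bf A}}{\rm {\bf A}}^{*}$, and likewise ${\rm{ddet}}\,{\rm {\bf A}}^{*} = {\rm{ddet}}\,{\rm {\bf A}}$ (so the two denominators appearing in the statement agree).

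For the criterion I would treat both directions. If ${\rm {\bf A}}$ is invertible and ${\rm {\bf A}}{\rm {\bf c}} = {\bf 0}$ for a column ${\rm {\bf c}}$, then ${\rm {\bf c}} = {\rm {\bf A}}^{-1}{\bf 0} = {\bf 0}$; since ${\rm {\bf A}}{\rm {\bf c}}$ is exactly the right linear combination ${\rm {\bf a}}_{.1}c_{1} + \cdots + {\rm {\bf a}}_{.n}c_{n}$ of the columns of ${\rm {\bf A}}$, those columns are right linearly independent, and Theorem~\ref{theorem:crit_depen} then forces $\det{\rm {\bf A}}^{*}{\rm {\bf A}} \ne 0$, i.e. ${\rm{ddet}}\,{\rm {\bf A}} \ne 0$. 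Conversely, if ${\rm{ddet}}\,{\rm {\bf A}} \ne 0$ then $\det{\rm {\bf A}}^{*}{\rm {\bf A}} \ne 0$ and $\det{\rm {\bf A}}{\rm {\bf A}}^{*} \ne 0$, so by Theorem~\ref{th:inver_her} both ${\rm {\bf A}}^{*}{\rm {\bf A}}$ and ${\rm {\bf A}}{\rm {\bf A}}^{*}$ are invertible. Setting $L := ({\rm {\bf A}}^{*}{\rm {\bf A}})^{-1}{\rm {\bf A}}^{*}$ and $R := {\rm {\bf A}}^{*}({\rm {\bf A}}{\rm {\bf A}}^{*})^{-1}$ we get $L{\rm {\bf A}} = {\rm {\bf I}} = {\rm {\bf A}}R$, whence $L = L({\rm {\bf A}}R) = (L{\rm {\bf A}})R = R$; so ${\rm {\bf A}}$ is invertible with ${\rm {\bf A}}^{-1} = (L{\rm {\bf A}})^{-1} = (R{\rm {\bf A}})^{-1} = L = R$, and the same cancellation yields uniqueness.

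To extract the two displayed formulas I would feed Theorem~\ref{th:inver_her} the Hermitian matrix ${\rm {\bf H}} = {\rm {\bf A}}^{*}{\rm {\bf A}}$ and use the Cramer-type rule it yields: for every column ${\rm {\bf b}}$ the $j$th coordinate of ${\rm {\bf H}}^{-1}{\rm {\bf b}}$ equals $\frac{1}{\det{\rm {\bf H}}}\,{\rm{cdet}}_{j}\,{\rm {\bf H}}_{.j}({\rm {\bf b}})$, i.e. summing the $j$th-column cofactors of $\det{\rm {\bf H}}$ against the entries of ${\rm {\bf b}}$ reassembles the column determinant ${\rm{cdet}}_{j}\,{\rm {\bf H}}_{.j}({\rm {\bf b}})$ (the column-expansion identity for Hermitian matrices from \cite{kyr1}). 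Applying ${\rm {\bf H}}^{-1}$ to the $i$th column ${\rm {\bf a}}^{*}_{.i}$ of ${\rm {\bf A}}^{*}$ produces the $i$th column of ${\rm {\bf A}}^{-1} = {\rm {\bf H}}^{-1}{\rm {\bf A}}^{*}$, so the $(j,i)$ entry of ${\rm {\bf A}}^{-1}$ equals $\frac{1}{{\rm{ddet}}\,{\rm {\bf A}}}\,{\rm{cdet}}_{j}\,({\rm {\bf A}}^{*}{\rm {\bf A}})_{.j}({\rm {\bf a}}^{*}_{.i}) = \frac{1}{{\rm{ddet}}\,{\rm {\bf A}}}\,{\mathbb{L}}_{ij}$, which is exactly the claimed representation of $(L{\rm {\bf A}})^{-1}$. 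Symmetrically, feeding ${\rm {\bf A}}{\rm {\bf A}}^{*}$ into the row-determinant half of Theorem~\ref{th:inver_her} and writing ${\rm {\bf A}}^{-1} = {\rm {\bf A}}^{*}({\rm {\bf A}}{\rm {\bf A}}^{*})^{-1}$ yields the representation of $(R{\rm {\bf A}})^{-1}$ with ${\mathbb{R}}_{ij} = {\rm{rdet}}_{i}\,({\rm {\bf A}}{\rm {\bf A}}^{*})_{i.}({\rm {\bf a}}^{*}_{j.})$.

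The part I expect to demand genuine care is this last passage, from the block-adjugate description of $({\rm {\bf A}}^{*}{\rm {\bf A}})^{-1}$ in Theorem~\ref{th:inver_her} to the single column determinant ${\rm{cdet}}_{j}\,({\rm {\bf A}}^{*}{\rm {\bf A}})_{.j}({\rm {\bf a}}^{*}_{.i})$. Because column and row determinants are not multiplicative and depend on the order of the factors, the Cramer-type identity used above --- the $j$th coordinate of ${\rm {\bf H}}^{-1}{\rm {\bf b}}$ being $\frac{1}{\det{\rm {\bf H}}}\,{\rm{cdet}}_{j}\,{\rm {\bf H}}_{.j}({\rm {\bf b}})$ --- is not automatic; it rests on the Laplace-type expansion of $\det{\rm {\bf H}}$ along a column together with Theorem~\ref{theorem:colum_combin} (replacing a column by a right linear combination of the others annihilates the column determinant), both established in \cite{kyr1,kyr2}. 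Granting those, the only residual hazard is bookkeeping --- which column of ${\rm {\bf A}}^{*}{\rm {\bf A}}$ or ${\rm {\bf A}}{\rm {\bf A}}^{*}$ gets replaced by which column or row of ${\rm {\bf A}}^{*}$, and whether a ${\rm{cdet}}$ or an ${\rm{rdet}}$ appears --- so the remaining work is routine once Theorems~\ref{theorem: determinant of hermitian matrix}, \ref{th:inver_her}, \ref{theorem:crit_depen} and \ref{theorem:equal_lef_righ_her} are in hand.
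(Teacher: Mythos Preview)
The paper does not prove this theorem; it is quoted from \cite{kyr2} as a preliminary result, so there is no in-paper proof to compare against. Your proposal is correct and is exactly the natural argument one would expect in \cite{kyr2}: reduce to the Hermitian case via ${\rm {\bf A}}^{*}{\rm {\bf A}}$ and ${\rm {\bf A}}{\rm {\bf A}}^{*}$, invoke Theorem~\ref{th:inver_her} for their invertibility and adjugate formulas, and read off the entries of $({\rm {\bf A}}^{*}{\rm {\bf A}})^{-1}{\rm {\bf A}}^{*}$ and ${\rm {\bf A}}^{*}({\rm {\bf A}}{\rm {\bf A}}^{*})^{-1}$ via the Cramer rule for Hermitian systems (your ``$j$th coordinate of ${\rm {\bf H}}^{-1}{\rm {\bf b}}$'' identity is precisely Theorem~\ref{theorem:right_system}(i), so you need not rederive it).
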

Moreover, the following  criterion of invertibility of a quaternion matrix
can be obtained.
\begin{theorem}\cite{kyr2}\label{theorem:inver_equiv} If ${\rm {\bf A}}\in {\rm M}\left( {n,{\rm
{\mathbb{H}}}} \right)$, then
 the following statements are equivalent.
\begin{itemize}
  \item[i)] ${\rm {\bf A}}$ is invertible, i.e. ${\rm {\bf A}}\in
GL\left( {n,{\mathbb{H}}} \right);$
  \item[ii)]  rows of ${\rm {\bf A}}$ are left-linearly independent;
  \item[iii)]  columns of ${\rm {\bf A}}$ are right-linearly
independent;
  \item[iv)] ${\rm ddet}\, {\rm {\bf A}}\ne 0$.
\end{itemize}
\end{theorem}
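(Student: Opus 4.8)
The plan is to make $\mathrm{ddet}\,\mathbf{A}\neq 0$ the hub of the equivalence and to reduce each of i), ii), iii) to it separately, so that no self-contained argument is needed: everything will follow by citing theorems already established above. First, i) $\Leftrightarrow$ iv) is literally the statement of Theorem \ref{theorem:deter_inver} ($\mathbf{A}$ is invertible if and only if $\mathrm{ddet}\,\mathbf{A}\neq 0$), so I would just invoke it. There is no circularity worry here, because that theorem is proved through the determinantal representations of the one-sided inverses $(L\mathbf{A})^{-1}$ and $(R\mathbf{A})^{-1}$, independently of the present statement.

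For iii) $\Leftrightarrow$ iv), I would apply Theorem \ref{theorem:crit_depen} directly to $\mathbf{A}$: it says that right-linear dependence of the columns of $\mathbf{A}$ is necessary and sufficient for $\det(\mathbf{A}^{*}\mathbf{A})=0$. Since $\mathrm{ddet}\,\mathbf{A}:=\det(\mathbf{A}^{*}\mathbf{A})$ by definition, taking contrapositives gives that the columns of $\mathbf{A}$ are right-linearly independent if and only if $\mathrm{ddet}\,\mathbf{A}\neq 0$. For ii) $\Leftrightarrow$ iv), I would apply the same Theorem \ref{theorem:crit_depen}, but to $\mathbf{A}^{*}$ in place of $\mathbf{A}$: then left-linear dependence of the rows of $(\mathbf{A}^{*})^{*}=\mathbf{A}$ is necessary and sufficient for $\det\!\big((\mathbf{A}^{*})^{*}\mathbf{A}^{*}\big)=\det(\mathbf{A}\mathbf{A}^{*})=0$. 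By Theorem \ref{theorem:equal_lef_righ_her} one has $\det(\mathbf{A}\mathbf{A}^{*})=\det(\mathbf{A}^{*}\mathbf{A})=\mathrm{ddet}\,\mathbf{A}$, so the rows of $\mathbf{A}$ are left-linearly independent if and only if $\mathrm{ddet}\,\mathbf{A}\neq 0$. Chaining the three equivalences i) $\Leftrightarrow$ iv), ii) $\Leftrightarrow$ iv) and iii) $\Leftrightarrow$ iv) yields the full four-way equivalence.

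Since every step is a direct application of a theorem proved earlier, the argument involves no genuine mathematical obstacle; the only thing to be careful about is the bookkeeping when passing from $\mathbf{A}$ to $\mathbf{A}^{*}$ in Theorem \ref{theorem:crit_depen} — one must correctly interchange ``columns/right-dependence'' with ``rows/left-dependence'' and keep track of the conjugate transposes — and the explicit use of Theorem \ref{theorem:equal_lef_righ_her} to identify $\det(\mathbf{A}\mathbf{A}^{*})$ with $\mathrm{ddet}\,\mathbf{A}$. So the ``hard part'' is purely notational.
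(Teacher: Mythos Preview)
Your argument is correct: each of the three equivalences i)$\Leftrightarrow$iv), ii)$\Leftrightarrow$iv), iii)$\Leftrightarrow$iv) follows directly from Theorems~\ref{theorem:deter_inver}, \ref{theorem:crit_depen}, and \ref{theorem:equal_lef_righ_her} exactly as you describe, and the bookkeeping when swapping $\mathbf{A}$ for $\mathbf{A}^{*}$ is handled properly. Note, however, that the paper does not supply its own proof of this theorem at all; it is simply quoted from \cite{kyr2} as a known result, so there is nothing in the present text to compare your approach against.
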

Due to Theorems \ref{th:inver_her}  and \ref{theorem:deter_inver}, we evidently obtain the following Cramer rules.
\begin{theorem}\cite{kyr2} \label{theorem:right_system}Let
\begin{equation}
\label{eq:right_syst} {\rm {\bf A}} \cdot {\rm {\bf x}} = {\rm
{\bf b}}
\end{equation}
be a right system of  linear equations  with a matrix of
coefficients ${\bf A}\in {\rm M}(n, {\mathbb{H}})$, a
column of constants ${\bf b} = \left( {b_{1} ,\ldots ,b_{n}
} \right)\in  {\mathbb{H}}^{n\times 1}$, and a
column of unknowns $ {\bf x} = \left( {x_{1} ,\ldots ,x_{n}}
\right)^{T}$.
\begin{itemize}
  \item[(i)]If ${\bf A}$ is Hermitian and ${\det}{\bf A} \ne 0$, then the
solution to the linear system (\ref{eq:right_syst}) is given by
components
\begin{equation*} x_{j} = {\frac{{{\rm{cdet}} _{j}
 {\bf A}_{.j} \left( {{\rm {\bf b}}}
\right)}}{{{\det}  {\bf A}}}}.
\end{equation*}
  \item[(ii)] If ${\bf A}$ is an arbitrary matrix and ${\rm{ddet}}{ \rm{\bf A}} \ne 0$, then the
solution to the linear system (\ref{eq:right_syst}) is given by
components
\begin{equation*} x_{j} = {\frac{{{\rm{cdet}} _{j}
({\rm {\bf A}}^{ *} {\rm {\bf A}})_{.j} \left( {{\rm {\bf f}}}
\right)}}{{{\rm{ddet}} {\rm {\bf A}}}}}, \,\,\, j =
 {1,\ldots,n},
\end{equation*}
where $ {\bf f} =  {\bf A}^{ *}  {\bf b}.$
\end{itemize}
\end{theorem}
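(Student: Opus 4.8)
The plan is to obtain both Cramer rules directly from the determinantal representations of the inverse already proved in Theorems \ref{th:inver_her} and \ref{theorem:deter_inver}, so that the only substantive work is matching the entries of $\mathbf{A}^{-1}\mathbf{b}$ against a cofactor expansion of a column determinant. First I would treat part (i). Since $\mathbf{A}$ is Hermitian with $\det\mathbf{A}\neq 0$, Theorem \ref{th:inver_her} gives that $\mathbf{A}^{-1}=(R\mathbf{A})^{-1}$ exists, hence the right system $(\ref{eq:right_syst})$ has the unique solution $\mathbf{x}=\mathbf{A}^{-1}\mathbf{b}$. Reading the $(r,c)$ entry of $(R\mathbf{A})^{-1}$ off the displayed matrix in Theorem \ref{th:inver_her} as $R_{cr}/\det\mathbf{A}$, the $j$th component is $x_j=(\det\mathbf{A})^{-1}\sum_{i=1}^{n}R_{ij}\,b_i$. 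It then remains to recognize $\sum_{i=1}^{n}R_{ij}\,b_i$ as ${\rm{cdet}}_j\,\mathbf{A}_{.j}(\mathbf{b})$. This is exactly a cofactor expansion along the $j$th column: each cofactor $R_{ij}$ is, by construction, a row determinant of a submatrix of the form $\mathbf{A}^{ii}$ or $\mathbf{A}^{ii}_{.j}(\mathbf{a}_{.i})$ — none of which contains the original $j$th column of $\mathbf{A}$ — so the $R_{ij}$ are unchanged when the $j$th column is overwritten by $\mathbf{b}$, and the expansion identity $\det\mathbf{A}=\sum_i a_{ij}R_{ij}$ recorded in Theorem \ref{th:inver_her} applies verbatim to $\mathbf{A}_{.j}(\mathbf{b})$ with $a_{ij}$ replaced by $b_i$.

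The step I expect to need the most care is this last identification, because in the noncommutative setting the factor $b_i$ must occupy exactly the right slot inside each cyclic product that defines ${\rm{cdet}}_j$; one cannot simply invoke multilinearity. The resolution is not to re-derive anything but to quote the column-expansion and equal-column properties of the column and row determinants established in \cite{kyr1,kyr2} (in particular the vanishing statement of Theorem \ref{theorem:colum_combin}, which already encodes that the replaced-column entry is the appropriately placed factor). An equivalent and perhaps cleaner route, which I would mention, is to verify directly that the proposed $\mathbf{x}$ satisfies $\mathbf{A}\mathbf{x}=\mathbf{b}$: writing $\mathbf{b}$ in terms of the columns of the invertible matrix $\mathbf{A}$ and using that ${\rm{cdet}}_j\,\mathbf{A}_{.j}(\mathbf{a}_{.l})$ equals $\det\mathbf{A}$ for $l=j$ and $0$ for $l\neq j$ (two equal columns, Theorem \ref{theorem:colum_combin}) collapses the sum $\sum_j a_{ij}\,{\rm{cdet}}_j\,\mathbf{A}_{.j}(\mathbf{b})$ to $b_i\,\det\mathbf{A}$; uniqueness of the solution of the invertible system then finishes (i).

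For part (ii) no new machinery is required. Left-multiplying $(\ref{eq:right_syst})$ by $\mathbf{A}^{*}$ gives $\mathbf{A}^{*}\mathbf{A}\,\mathbf{x}=\mathbf{A}^{*}\mathbf{b}=\mathbf{f}$. Since ${\rm{ddet}}\,\mathbf{A}\neq 0$, Theorem \ref{theorem:inver_equiv} shows $\mathbf{A}$, and hence $\mathbf{A}^{*}$, is invertible, so left-multiplication by $\mathbf{A}^{*}$ is a bijection and the new system has precisely the same unique solution as the original. Moreover $\mathbf{A}^{*}\mathbf{A}$ is Hermitian and, by the definition of the double determinant, $\det(\mathbf{A}^{*}\mathbf{A})={\rm{ddet}}\,\mathbf{A}\neq 0$. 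Applying part (i) to the Hermitian system $(\mathbf{A}^{*}\mathbf{A})\mathbf{x}=\mathbf{f}$ yields $x_j={\rm{cdet}}_j(\mathbf{A}^{*}\mathbf{A})_{.j}(\mathbf{f})/\det(\mathbf{A}^{*}\mathbf{A})={\rm{cdet}}_j(\mathbf{A}^{*}\mathbf{A})_{.j}(\mathbf{f})/{\rm{ddet}}\,\mathbf{A}$ for all $j=1,\ldots,n$, which is the asserted formula. Thus the whole proof reduces to the inverse representations of Theorems \ref{th:inver_her} and \ref{theorem:deter_inver} plus the column-determinant expansion lemmas, with the noncommutative placement of the replaced-column entries being the only genuinely delicate point.
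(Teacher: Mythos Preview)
Your proposal is correct and matches the paper's approach: the paper does not give a detailed proof but simply states that the Cramer rules ``evidently'' follow from the determinantal inverse representations of Theorems \ref{th:inver_her} and \ref{theorem:deter_inver}, which is exactly the route you take. Your discussion of the cofactor-expansion identification and the reduction of (ii) to (i) via the Hermitian system $\mathbf{A}^{*}\mathbf{A}\,\mathbf{x}=\mathbf{A}^{*}\mathbf{b}$ in fact fills in more detail than the paper itself provides.
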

\begin{theorem}\cite{kyr2}\label{theorem:left_system} Let
\begin{equation}\label{eq:left_syst}
 {\bf x} \cdot  {\bf A} =  {\bf b}
\end{equation}
be a left  system of  linear equations
 with a
matrix of coefficients $ {\bf A}\in {\rm M}(n,
{\mathbb{H}})$, a row of constants $ {\bf b} = \left(
{b_{1} ,\ldots ,b_{n} } \right)\in
{\mathbb{H}}^{1\times n}$, and a row of unknowns $ {\bf x}
= \left( {x_{1} ,\ldots ,x_{n}} \right)$.
\begin{itemize}
\item[(i)]If ${\bf A}$ is Hermitian and ${\det}{\bf A} \ne 0$, then the
solution to (\ref{eq:left_syst}) is given by
components
\begin{equation*} x_{i} = {\frac{{{\rm{rdet}} _{i}
{\bf A}_{i.} \left( {{\rm {\bf b}}}
\right)}}{{{\det}  {\bf A}}}}.
\end{equation*}
  \item[(ii)]If ${\rm{ddet}}{ \rm{\bf
A}} \ne 0$, then the solution to the linear system
(\ref{eq:left_syst}) is given by components
\begin{equation*}
  x_{i} = {\frac{{{\rm{rdet}}_{i} ( {\bf A} {\bf A}^{ *})_{i.} \left(
{\bf z} \right)}}{{{\rm{ddet}}{\bf A}}}}, \quad  i =
 {1,\ldots,n},
\end{equation*}
where $  {\bf z} =  {\bf b} {\bf A}^{ *}.$
\end{itemize}
\end{theorem}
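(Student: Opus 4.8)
The plan is to mirror the argument already used for the right system in Theorem~\ref{theorem:right_system}: in both cases the coefficient matrix is invertible, so \eqref{eq:left_syst} has a unique solution obtained by right-multiplying the constant row by the inverse, and the only real work is to rewrite the $i$th component of that product as a single row determinant.

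For part~(i), since $\mathbf{A}$ is Hermitian with $\det\mathbf{A}\neq 0$, Theorem~\ref{th:inver_her} supplies the two-sided inverse $\mathbf{A}^{-1}=(R\mathbf{A})^{-1}$ together with its determinantal representation, so the unique solution of \eqref{eq:left_syst} is $\mathbf{x}=\mathbf{b}\,\mathbf{A}^{-1}$. Reading off the $(k,i)$ entry of $(R\mathbf{A})^{-1}$ as $R_{ik}/\det\mathbf{A}$ and using that $\det\mathbf{A}\in\mathbb{R}$ is central (Theorem~\ref{theorem: determinant of hermitian matrix}), one obtains
\[
x_i=\sum_{k=1}^{n} b_k\,(\mathbf{A}^{-1})_{ki}=\frac{1}{\det\mathbf{A}}\sum_{k=1}^{n} b_k\,R_{ik}.
\]
It then remains to recognize $\sum_{k} b_k R_{ik}$ as the cofactor expansion, along the $i$th row, of $\mathrm{rdet}_i\,\mathbf{A}_{i.}(\mathbf{b})$. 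This rests on the row‑expansion property of the row determinant from \cite{kyr1}: the cofactors $R_{ik}$ are built from $\mathbf{A}^{ii}$ and $\mathbf{A}_{.k}^{ii}(\mathbf{a}_{.i})$, neither of which involves the $i$th row of $\mathbf{A}$, and $\det\mathbf{A}=\sum_k a_{ik}R_{ik}$ already records this expansion for the original row; hence replacing $\mathbf{a}_{i.}$ by $\mathbf{b}$ merely replaces each $a_{ik}$ by $b_k$ inside $\mathrm{rdet}_i$, giving $x_i=\mathrm{rdet}_i\,\mathbf{A}_{i.}(\mathbf{b})/\det\mathbf{A}$.

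For part~(ii) the idea is to reduce to the Hermitian left system just handled. Right‑multiplying \eqref{eq:left_syst} by $\mathbf{A}^{*}$ yields $\mathbf{x}(\mathbf{A}\mathbf{A}^{*})=\mathbf{b}\mathbf{A}^{*}=\mathbf{z}$; conversely, since $\mathrm{ddet}\,\mathbf{A}\neq 0$ both $\mathbf{A}$ and $\mathbf{A}^{*}$ are invertible (Theorems~\ref{theorem:deter_inver} and~\ref{theorem:inver_equiv}), so right‑multiplication by $\mathbf{A}^{*}$ is a bijection and the two systems share the same unique solution. Now $\mathbf{A}\mathbf{A}^{*}$ is Hermitian with $\det(\mathbf{A}\mathbf{A}^{*})=\mathrm{ddet}\,\mathbf{A}\neq 0$ (by the definition of $\mathrm{ddet}$ and Theorem~\ref{theorem:equal_lef_righ_her}), so part~(i) applies to $\mathbf{x}(\mathbf{A}\mathbf{A}^{*})=\mathbf{z}$ and delivers precisely $x_i=\mathrm{rdet}_i\,(\mathbf{A}\mathbf{A}^{*})_{i.}(\mathbf{z})/\mathrm{ddet}\,\mathbf{A}$.

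The only genuinely delicate step is the cofactor‑expansion identity invoked in part~(i): one must be sure that the quantities $R_{ij}$ from Theorem~\ref{th:inver_her} really are ``$i$th‑row‑independent'' cofactors, so that substituting a new $i$th row into $\mathrm{rdet}_i$ is legitimate, and one must keep scalars on the correct side of quaternionic products — here this is harmless only because $\det\mathbf{A}$ (and hence $\mathrm{ddet}\,\mathbf{A}$) is real. Everything else is routine bookkeeping with inverses and with the identity $\det\mathbf{A}\mathbf{A}^{*}=\det\mathbf{A}^{*}\mathbf{A}$.
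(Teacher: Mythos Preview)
Your proposal is correct and follows exactly the route the paper indicates: the paper does not spell out a proof of Theorem~\ref{theorem:left_system} but simply remarks that, ``Due to Theorems~\ref{th:inver_her} and~\ref{theorem:deter_inver}, we evidently obtain the following Cramer rules,'' and then quotes the result from \cite{kyr2}. Your argument for part~(i) via $\mathbf{x}=\mathbf{b}\,(R\mathbf{A})^{-1}$ and the row-cofactor expansion, and for part~(ii) via the Hermitian reduction $\mathbf{x}(\mathbf{A}\mathbf{A}^{*})=\mathbf{z}$, is precisely the ``evident'' derivation the paper has in mind.
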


For determinantal representations of quaternion generalized inverses,
we are needed by the following notations. Let $\alpha : = \left\{
{\alpha _{1} ,\ldots ,\alpha _{k}} \right\} \subseteq {\left\{
{1,\ldots ,m} \right\}}$ and $\beta : = \left\{ {\beta _{1}
,\ldots ,\beta _{k}} \right\} \subseteq {\left\{ {1,\ldots ,n}
\right\}}$ be subsets of the order $1 \le k \le \min {\left\{
{m,n} \right\}}$. By ${\rm {\bf A}}_{\beta} ^{\alpha} $ denote the
submatrix of ${\rm {\bf A}}$ determined by the rows indexed by
$\alpha$ and the columns indexed by $\beta$. Then ${\rm {\bf
A}}{\kern 1pt}_{\alpha} ^{\alpha}$ denotes the principal submatrix
determined by the rows and columns indexed by $\alpha$.
 If ${\rm {\bf A}} \in {\rm
M}\left( {n,{\rm {\mathbb{H}}}} \right)$ is Hermitian, then by
${\left| {{\rm {\bf A}}_{\alpha} ^{\alpha} } \right|}$ denote the
corresponding principal minor of $\det {\rm {\bf A}}$.
 For $1 \leq k\leq n$, the collection of strictly
increasing sequences of $k$ integers chosen from $\left\{
{1,\ldots ,n} \right\}$ is denoted by $\textsl{L}_{ k,
n}: = {\left\{ {\,\alpha :\alpha = \left( {\alpha _{1} ,\ldots
,\alpha _{k}} \right),\,{\kern 1pt} 1 \le \alpha _{1} \le \ldots
\le \alpha _{k} \le n} \right\}}$.  For fixed $i \in \alpha $ and $j \in
\beta $, let $I_{r,\,m} {\left\{ {i} \right\}}: = {\left\{
{\,\alpha :\alpha \in L_{r,m} ,i \in \alpha}  \right\}}{\rm ,}
\quad J_{r,\,n} {\left\{ {j} \right\}}: = {\left\{ {\,\beta :\beta
\in L_{r,n} ,j \in \beta}  \right\}}$.

The Drazin inverse for an arbitrary square matrix ${\bf A}$ over $\mathbb{H}$ is defined to be the unique matrix ${\bf X}$ that satisfying the following equations \cite{dr},
\begin{gather*}{\bf X} {\rm {\bf
A}}{\bf X}  = {\bf X};\,
                                   {\rm {\bf A}}{\bf X}  = {\rm
{\bf X}{\bf A}}; \,
                                   {\rm {\bf A}}^{k+1}{\rm
{\bf X}}={\rm {\bf
         A}}^{k}. \end{gather*}
where $k = Ind\,{\bf A}$ is the smallest positive number such that $\rank {\bf A}^{k+1}=\rank {\bf A}^{k}$. It is denoted by ${\rm {\bf X}}={\bf A}^{D}$.

Denote by ${\rm {\bf a}}_{.j}^{(m)} $ and ${\rm {\bf
a}}_{i.}^{(m)} $ the $j$th column  and the $i$th row of  ${\rm
{\bf A}}^{m} $, and by $\hat{{\rm {\bf a}}}_{.s}$ and $\check{{{\rm {\bf a}}}}_{t.}$ the $s$th column of $
({\bf A}^{ 2k+1})^{*} {\bf A}^{k}=:\hat{{\rm {\bf A}}}=
(\hat{a}_{ij})\in {\mathbb{H}}^{n\times n}$
and  the $t$th row of $
{\bf A}^{k}({\bf A}^{ 2k+1})^{*} =:\check{{\rm {\bf A}}}=
(\check{a}_{ij})\in {\mathbb{H}}^{n\times n}$, respectively,  for all $s,t=
{1,\ldots.n}$.
\begin{theorem} \cite{kyr4}\label{theor:det_rep_draz} If ${\rm {\bf A}} \in {\rm M}\left( {n, {\mathbb{H}}}\right)$  with
$ Ind{\kern 1pt} {\rm {\bf A}}=k$ and $\rank{\rm {\bf A}}^{k+1} =
\rank{\rm {\bf A}}^{k} = r$, then the Drazin inverse  ${\rm {\bf A}}^{ D} $ possess the
determinantal representations,
\begin{equation}
\label{eq:cdet_draz} a_{ij} ^{D}=
 {\frac{{ \sum\limits_{t = 1}^{n} {a}_{it}^{(k)}   {\sum\limits_{\beta \in J_{r,\,n} {\left\{ {t}
\right\}}} {{\rm{cdet}} _{t} \left( {\left({\bf A}^{ 2k+1} \right)^{*}\left({\bf A}^{ 2k+1} \right)_{. \,t} \left( \hat{{\rm {\bf a}}}_{.\,j}
\right)} \right){\kern 1pt}  _{\beta} ^{\beta} } }
}}{{{\sum\limits_{\beta \in J_{r,\,n}} {{\left| {\left({\bf A}^{ 2k+1} \right)^{*}\left({\bf A}^{ 2k+1} \right){\kern 1pt} _{\beta} ^{\beta}
}  \right|}}} }}}
\end{equation}
and
\begin{equation}
\label{eq:rdet_draz} a_{ij} ^{D}=
{\frac{\sum\limits_{s = 1}^{n}\left({{\sum\limits_{\alpha \in I_{r,\,n} {\left\{ {s}
\right\}}} {{\rm{rdet}} _{s} \left( {\left( { {\bf A}^{ 2k+1} \left({\bf A}^{ 2k+1} \right)^{*}
} \right)_{\,. s} (\check{{\rm {\bf a}}}_{i\,.})} \right) {\kern 1pt} _{\alpha} ^{\alpha} } }
}\right){a}_{sj}^{(k)}}{{{\sum\limits_{\alpha \in I_{r,\,n}} {{\left| {\left( { {\bf A}^{ 2k+1} \left({\bf A}^{ 2k+1} \right)^{*}
} \right){\kern 1pt} _{\alpha} ^{\alpha}
}  \right|}}} }}}
\end{equation}
\end{theorem}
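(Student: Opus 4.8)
The plan is to reduce the statement to the already-available determinantal representation of the Moore--Penrose inverse over $\mathbb{H}$ (see, e.g., \cite{kyr3}) together with the limit-free identity
\[
\mathbf{A}^{D}=\mathbf{A}^{k}\bigl(\mathbf{A}^{2k+1}\bigr)^{+}\mathbf{A}^{k},
\]
valid whenever $\mathrm{Ind}\,\mathbf{A}=k$. Over the quaternions this identity follows from the core--nilpotent decomposition $\mathbf{A}=\mathbf{S}\,\mathrm{diag}(\mathbf{C},\mathbf{N})\,\mathbf{S}^{-1}$ with $\mathbf{C}$ invertible and $\mathbf{N}^{k}=\mathbf{0}$, by computing $(\mathbf{A}^{2k+1})^{+}$ from the full-rank factorization of $\mathbf{A}^{2k+1}$, exactly as in the complex case. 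Since the ranks of the powers of $\mathbf{A}$ are non-increasing and become constant from the index on, the hypothesis $\rank\mathbf{A}^{k+1}=\rank\mathbf{A}^{k}=r$ also forces $\rank\mathbf{A}^{2k+1}=r$, so the determinantal representation of $\mathbf{M}^{+}$ is applicable to $\mathbf{M}:=\mathbf{A}^{2k+1}$.

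For the column version \eqref{eq:cdet_draz} I would write $a^{D}_{ij}=\sum_{s=1}^{n}a^{(k)}_{is}\,(\mathbf{M}^{+}\mathbf{A}^{k})_{sj}$ and substitute the column-determinant representation of $\mathbf{M}^{+}$, whose numerator at entry $(s,t)$ is $\sum_{\beta\in J_{r,n}\{s\}}\mathrm{cdet}_{s}\bigl((\mathbf{M}^{*}\mathbf{M})_{.s}(\mathbf{m}^{*}_{.t})\bigr){}_{\beta}^{\beta}$ with real denominator $\sum_{\beta\in J_{r,n}}|(\mathbf{M}^{*}\mathbf{M})_{\beta}^{\beta}|$. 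The crucial step is to absorb the right factor $\mathbf{A}^{k}$ into the column determinant by using that the $s$th column determinant is additive and right-homogeneous in its $s$th (replaced) column --- a consequence of the quaternionic Cramer rule of Theorem~\ref{theorem:right_system} --- which yields
\[
\sum_{t=1}^{n}\mathrm{cdet}_{s}\bigl((\mathbf{M}^{*}\mathbf{M})_{.s}(\mathbf{m}^{*}_{.t})\bigr){}_{\beta}^{\beta}\,a^{(k)}_{tj}=\mathrm{cdet}_{s}\Bigl((\mathbf{M}^{*}\mathbf{M})_{.s}\bigl(\textstyle\sum_{t}\mathbf{m}^{*}_{.t}a^{(k)}_{tj}\bigr)\Bigr){}_{\beta}^{\beta}.
\]
Restricting to the $\beta$-principal submatrix only truncates the replaced column to the entries indexed by $\beta$ and does not touch the range of $t$, while the real denominator factors out without affecting quaternionic order. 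Since $\sum_{t}\mathbf{m}^{*}_{.t}a^{(k)}_{tj}$ is precisely the $j$th column of $\mathbf{M}^{*}\mathbf{A}^{k}=(\mathbf{A}^{2k+1})^{*}\mathbf{A}^{k}=\hat{\mathbf{A}}$, i.e.\ $\hat{\mathbf{a}}_{.j}$, we obtain $(\mathbf{M}^{+}\mathbf{A}^{k})_{sj}$ equal to the inner ratio in \eqref{eq:cdet_draz}, and left-multiplication by $\mathbf{A}^{k}$ gives \eqref{eq:cdet_draz}.

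The row representation \eqref{eq:rdet_draz} is obtained by the mirror argument: take the row-determinant form of $\mathbf{M}^{+}$ (built from $\mathbf{M}\mathbf{M}^{*}$ and the rows of $\mathbf{M}^{*}$), write $a^{D}_{ij}=\sum_{s=1}^{n}(\mathbf{A}^{k}\mathbf{M}^{+})_{is}\,a^{(k)}_{sj}$, absorb the left factor $\mathbf{A}^{k}$ into the $s$th row determinant by its left-additivity and left-homogeneity in the $s$th row (from the left Cramer rule, Theorem~\ref{theorem:left_system}), and note that $\mathbf{a}^{(k)}_{i.}\mathbf{M}^{*}=\mathbf{a}^{(k)}_{i.}(\mathbf{A}^{2k+1})^{*}$ is the $i$th row $\check{\mathbf{a}}_{i.}$ of $\check{\mathbf{A}}=\mathbf{A}^{k}(\mathbf{A}^{2k+1})^{*}$.

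I expect the main obstacle to be the bookkeeping of this absorption step rather than any single deep fact: one has to check that the interchange of the finite sum $\sum_{t}(\cdots)\,a^{(k)}_{tj}$ with $\mathrm{cdet}_{s}$ survives the restriction to the $\beta$-principal submatrix, that every quaternionic multiplication stays on the correct side throughout (the $a^{(k)}_{is}$ on the left, outside the column determinant, and the $a^{(k)}_{tj}$ on the right, inside the replaced column), and that the scalar denominator --- a real sum of principal minors of a Hermitian matrix --- may be pulled through without disturbing the order of factors. The only inputs external to the column/row-determinant calculus are the identity $\mathbf{A}^{D}=\mathbf{A}^{k}(\mathbf{A}^{2k+1})^{+}\mathbf{A}^{k}$ and the rank stabilization $\rank\mathbf{A}^{2k+1}=r$, both of which are standard.
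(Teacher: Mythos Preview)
The paper does not actually prove this theorem; it is quoted from \cite{kyr4} as a preliminary result, so there is no in-paper argument to compare against. Your approach---combining the identity $\mathbf{A}^{D}=\mathbf{A}^{k}(\mathbf{A}^{2k+1})^{+}\mathbf{A}^{k}$ with the determinantal representation of the Moore--Penrose inverse from \cite{kyr3} and the right-linearity of $\mathrm{cdet}_{s}$ (resp.\ left-linearity of $\mathrm{rdet}_{s}$) in the replaced column (resp.\ row)---is correct and is in fact the route taken in \cite{kyr4}, so your proposal matches the original source.

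One minor caution on your bookkeeping: the right-linearity you invoke is a basic structural property of $\mathrm{cdet}_{s}$ in its $s$th column (every monomial in $\mathrm{cdet}_{s}$ ends with the factor from column $s$), not something that needs Theorem~\ref{theorem:right_system}; citing the Cramer rule here is circular in spirit, since that rule is itself derived from this linearity. Also, in the row version you should write $a^{D}_{ij}=\sum_{s}(\mathbf{A}^{k}\mathbf{M}^{+})_{is}\,a^{(k)}_{sj}$ and absorb the \emph{left} factor $\mathbf{A}^{k}$ into the replaced $s$th \emph{row} of $\mathbf{M}\mathbf{M}^{*}$ via left-linearity of $\mathrm{rdet}_{s}$; your text says this but be sure the index that carries $\check{\mathbf{a}}_{i.}$ is the one entering the row determinant, matching the subscript ``$._{\,s}(\check{\mathbf{a}}_{i.})$'' in \eqref{eq:rdet_draz}. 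With those cosmetic fixes the argument goes through.
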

In the special case, when ${\rm {\bf A}} \in {\rm M}\left( {n, {\mathbb{H}}}\right)$ is Hermitian, we can obtain  simpler determinantal
representations of the Drazin inverse.
\begin{theorem} \cite{kyr4}\label{theor:det_rep_draz_her}
If ${\rm {\bf A}} \in {\rm M}\left( {n, {\mathbb{H}}}\right)$ is Hermitian with
$ Ind{\kern 1pt} {\rm {\bf A}}=k$ and $\rank{\rm {\bf A}}^{k+1} =
\rank{\rm {\bf A}}^{k} = r$, then the Drazin inverse ${\rm {\bf
A}}^{D} = \left( {a_{ij}^{D} } \right) \in {\rm
{\mathbb{H}}}_{}^{n\times n} $ possess the following determinantal
representations,
\begin{equation}
\label{eq:dr_rep_cdet} a_{ij}^{D}  = {\frac{{{\sum\limits_{\beta
\in J_{r,\,n} {\left\{ {i} \right\}}} {{\rm{cdet}} _{i} \left(
{\left( {{\rm {\bf A}}^{k+1}} \right)_{\,. \,i} \left( {{\rm {\bf
a}}_{.j}^{ k} }  \right)} \right){\kern 1pt} {\kern 1pt} _{\beta}
^{\beta} } } }}{{{\sum\limits_{\beta \in J_{r,\,\,n}} {{\left|
{\left( {{\rm {\bf A}}^{k+1}} \right){\kern 1pt} _{\beta} ^{\beta}
}  \right|}}} }}},
\end{equation}
or
\begin{equation}
\label{eq:dr_rep_rdet} a_{ij}^{D}  = {\frac{{{\sum\limits_{\alpha
\in I_{r,n} {\left\{ {j} \right\}}} {{\rm{rdet}} _{j} \left(
{({\rm {\bf A}}^{ k+1} )_{j\,.\,} ({\rm {\bf a}}_{i.\,}^{ (k)} )}
\right)\,_{\alpha} ^{\alpha} } }}}{{{\sum\limits_{\alpha \in
I_{r,\,n}}  {{\left| {\left( {{\rm {\bf A}}^{k+1} } \right){\kern
1pt}  _{\alpha} ^{\alpha} } \right|}}} }}}.
\end{equation}
\end{theorem}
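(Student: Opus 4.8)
The plan is to deduce both determinantal representations from the classical limit representation of the Drazin inverse,
\[
{\rm {\bf A}}^{D}=\lim_{\lambda\to 0}\bigl(\lambda{\rm {\bf I}}+{\rm {\bf A}}^{k+1}\bigr)^{-1}{\rm {\bf A}}^{k}=\lim_{\lambda\to 0}{\rm {\bf A}}^{k}\bigl(\lambda{\rm {\bf I}}+{\rm {\bf A}}^{k+1}\bigr)^{-1},
\]
where $\lambda$ ranges over the real numbers for which $\lambda{\rm {\bf I}}+{\rm {\bf A}}^{k+1}$ is invertible. This holds for any square ${\rm {\bf A}}$ with $Ind\,{\rm {\bf A}}=k$: writing ${\rm {\bf A}}$ in core--nilpotent form ${\rm {\bf A}}={\rm {\bf P}}\,{\rm diag}({\rm {\bf C}},{\rm {\bf N}})\,{\rm {\bf P}}^{-1}$ with ${\rm {\bf C}}$ invertible and ${\rm {\bf N}}^{k}={\rm {\bf 0}}$ one gets ${\rm {\bf A}}^{k}={\rm {\bf P}}\,{\rm diag}({\rm {\bf C}}^{k},{\rm {\bf 0}})\,{\rm {\bf P}}^{-1}$, ${\rm {\bf A}}^{k+1}={\rm {\bf P}}\,{\rm diag}({\rm {\bf C}}^{k+1},{\rm {\bf 0}})\,{\rm {\bf P}}^{-1}$, whence $(\lambda{\rm {\bf I}}+{\rm {\bf A}}^{k+1})^{-1}{\rm {\bf A}}^{k}\to{\rm {\bf P}}\,{\rm diag}({\rm {\bf C}}^{-1},{\rm {\bf 0}})\,{\rm {\bf P}}^{-1}={\rm {\bf A}}^{D}$ as $\lambda\to 0$, and the second formula follows since ${\rm {\bf A}}^{k}$ commutes with $(\lambda{\rm {\bf I}}+{\rm {\bf A}}^{k+1})^{-1}$. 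The use of a Hermitian ${\rm {\bf A}}$ enters here: then ${\rm {\bf A}}^{k+1}$, hence $\lambda{\rm {\bf I}}+{\rm {\bf A}}^{k+1}$ for every real $\lambda$, is again Hermitian, and since $\rank{\rm {\bf A}}^{k+1}=r$ it is invertible for all sufficiently small $\lambda\ne 0$.

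Fixing such a $\lambda$, the $j$th column of $(\lambda{\rm {\bf I}}+{\rm {\bf A}}^{k+1})^{-1}{\rm {\bf A}}^{k}$ is the unique solution of the right Hermitian system $(\lambda{\rm {\bf I}}+{\rm {\bf A}}^{k+1}){\rm {\bf x}}={\rm {\bf a}}_{.j}^{(k)}$, so the Cramer rule of Theorem \ref{theorem:right_system}(i) gives
\[
\Bigl[\bigl(\lambda{\rm {\bf I}}+{\rm {\bf A}}^{k+1}\bigr)^{-1}{\rm {\bf A}}^{k}\Bigr]_{ij}=\frac{{\rm{cdet}}_{i}\bigl(\lambda{\rm {\bf I}}+{\rm {\bf A}}^{k+1}\bigr)_{.\,i}\bigl({\rm {\bf a}}_{.j}^{(k)}\bigr)}{\det\bigl(\lambda{\rm {\bf I}}+{\rm {\bf A}}^{k+1}\bigr)},
\]
and symmetrically the $i$th row of ${\rm {\bf A}}^{k}(\lambda{\rm {\bf I}}+{\rm {\bf A}}^{k+1})^{-1}$ solves ${\rm {\bf x}}(\lambda{\rm {\bf I}}+{\rm {\bf A}}^{k+1})={\rm {\bf a}}_{i.}^{(k)}$, so Theorem \ref{theorem:left_system}(i) yields the analogous identity with ${\rm{rdet}}_{j}\bigl(\lambda{\rm {\bf I}}+{\rm {\bf A}}^{k+1}\bigr)_{j\,.}\bigl({\rm {\bf a}}_{i.}^{(k)}\bigr)$ in the numerator. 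I would then expand both sides in powers of $\lambda$. For the denominator one uses the principal-minor expansion of the determinant of a Hermitian quaternion matrix, $\det(\lambda{\rm {\bf I}}+{\rm {\bf A}}^{k+1})=\sum_{s=0}^{n}\lambda^{\,n-s}e_{s}$, where $e_{0}=1$ and $e_{s}=\sum_{\beta\in J_{s,n}}{\bigl|({\rm {\bf A}}^{k+1})_{\beta}^{\beta}\bigr|}$; since $\rank{\rm {\bf A}}^{k+1}=r$ all terms with $s>r$ vanish, the lowest power present is $\lambda^{\,n-r}$, and its coefficient $d_{r}=\sum_{\beta\in J_{r,n}}{\bigl|({\rm {\bf A}}^{k+1})_{\beta}^{\beta}\bigr|}$ is nonzero, being the $r$th elementary symmetric function of the real eigenvalues of ${\rm {\bf A}}^{k+1}$, exactly $r$ of which are nonzero.

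For the numerator I would establish a Laplace-type expansion of ${\rm{cdet}}_{i}$ along the perturbed diagonal: grouping the permutations in the definition of ${\rm{cdet}}_{i}$ according to which diagonal positions contribute the summand $\lambda$ (a position $(l,l)$ with $l\ne i$ can do so only as a $1$-cycle) gives
\[
{\rm{cdet}}_{i}\bigl(\lambda{\rm {\bf I}}+{\rm {\bf A}}^{k+1}\bigr)_{.\,i}\bigl({\rm {\bf a}}_{.j}^{(k)}\bigr)=\sum_{\gamma\ni i}\lambda^{\,n-|\gamma|}\,{\rm{cdet}}_{i}\bigl(({\rm {\bf A}}^{k+1})_{.\,i}({\rm {\bf a}}_{.j}^{(k)})\bigr)_{\gamma}^{\gamma},
\]
the sum being over index subsets $\gamma$ containing $i$, so the coefficient of $\lambda^{\,n-r}$ is $\sum_{\gamma\in J_{r,n}\{i\}}{\rm{cdet}}_{i}\bigl(({\rm {\bf A}}^{k+1})_{.\,i}({\rm {\bf a}}_{.j}^{(k)})\bigr)_{\gamma}^{\gamma}$, and likewise for ${\rm{rdet}}_{j}$. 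Since the left-hand side of the entrywise identity above has a finite limit as $\lambda\to 0$ while the denominator equals $\lambda^{\,n-r}(d_{r}+O(\lambda))$ with $d_{r}\ne 0$, the numerator (a polynomial in $\lambda$) is divisible by $\lambda^{\,n-r}$ — the lower-order terms cancel automatically — and therefore
\[
a_{ij}^{D}=\lim_{\lambda\to 0}\frac{{\rm{cdet}}_{i}\bigl(\lambda{\rm {\bf I}}+{\rm {\bf A}}^{k+1}\bigr)_{.\,i}\bigl({\rm {\bf a}}_{.j}^{(k)}\bigr)}{\det\bigl(\lambda{\rm {\bf I}}+{\rm {\bf A}}^{k+1}\bigr)}=\frac{\sum_{\gamma\in J_{r,n}\{i\}}{\rm{cdet}}_{i}\bigl(({\rm {\bf A}}^{k+1})_{.\,i}({\rm {\bf a}}_{.j}^{(k)})\bigr)_{\gamma}^{\gamma}}{\sum_{\beta\in J_{r,n}}{\bigl|({\rm {\bf A}}^{k+1})_{\beta}^{\beta}\bigr|}},
\]
which is (\ref{eq:dr_rep_cdet}); the row representation (\ref{eq:dr_rep_rdet}) drops out the same way from the second limit and Theorem \ref{theorem:left_system}(i).

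The step I expect to be the main obstacle is the Laplace-type expansion of the column and row determinants along the $\lambda$-perturbed diagonal. Unlike the ordinary determinant, ${\rm{cdet}}_{i}$ and ${\rm{rdet}}_{j}$ are multilinear only in the distinguished column (row), so this expansion cannot be read off from multilinearity but must be derived directly from the permutation definitions, with careful control of the signs $(-1)^{n-r}$ and of the cycle through the index $i$ (resp. $j$); it is, however, precisely the kind of identity already available for determinants of Hermitian matrices in \cite{kyr1,kyr2}, and once it is in hand the rest is the limit argument above together with the Hermitian Cramer rules. (As a cross-check one may instead specialize the general Theorem \ref{theor:det_rep_draz} via ${\rm {\bf A}}^{*}={\rm {\bf A}}$, but the resulting expressions involve powers ${\rm {\bf A}}^{4k+2}$ and ${\rm {\bf A}}^{3k+1}$ and still need a nontrivial reduction to the stated form, so the direct route is preferable.)
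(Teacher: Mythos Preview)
The paper does not prove this theorem: it is quoted verbatim from \cite{kyr4} (note the \texttt{\textbackslash cite\{kyr4\}} attached to the theorem environment), so there is no ``paper's own proof'' to compare your attempt against. Your proposal is therefore a reconstruction of a proof that lives in the cited reference, not in the present manuscript.

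That said, your outline is sound and is in fact the standard route to such formulas: the limit representation $\mathbf{A}^{D}=\lim_{\lambda\to 0}(\lambda\mathbf{I}+\mathbf{A}^{k+1})^{-1}\mathbf{A}^{k}$, the Hermitian Cramer rule from Theorem~\ref{theorem:right_system}(i)/\ref{theorem:left_system}(i), the principal-minor expansion $\det(\lambda\mathbf{I}+\mathbf{A}^{k+1})=\sum_{s}\lambda^{n-s}\sum_{\beta\in J_{s,n}}|(\mathbf{A}^{k+1})_{\beta}^{\beta}|$ (Theorem~\ref{theor:char_polin}), and extraction of the $\lambda^{n-r}$ coefficients. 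You have correctly located the only genuinely delicate point, namely the analogue of the characteristic-polynomial expansion for the \emph{numerator} ${\rm cdet}_{i}(\lambda\mathbf{I}+\mathbf{A}^{k+1})_{.\,i}(\mathbf{a}_{.j}^{(k)})$, where the matrix is no longer Hermitian once the $i$th column is replaced. Your argument that a diagonal entry $(l,l)$ with $l\ne i$ can contribute the term $\lambda$ only through a fixed point of the permutation is the right observation, and it does yield the claimed expansion; this is exactly the device used in \cite{kyr4} (and earlier in \cite{kyr3} for the Moore--Penrose case). One small point worth tightening: your assertion that $d_{r}\ne 0$ because it is ``the $r$th elementary symmetric function of the real eigenvalues'' tacitly uses that a Hermitian quaternion matrix has real eigenvalues whose elementary symmetric functions agree with the principal-minor sums --- this is available from Corollary~\ref{th:diag_norm} and Theorem~\ref{theor:char_polin}, but you should cite it rather than leave it implicit.
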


\subsection{Some provisions of quaternion matrices and  pre-Hilbert spaces.}

Due to quaternion-scalar multiplying on the right, quaternionic column-vectors  form a right  vector $\mathbb{H}$-space, and, by quaternion-scalar multiplying on the left, quaternionic row-vectors  form a left  vector $\mathbb{H}$-space.
Moreover, we define   right  and left   quaternionic vector spaces, denoted by $\mathcal{H}_{r}$ and $\mathcal{H}_{l}$, respectively,  with corresponding $\mathbb{H}$-valued inner products $\langle\cdot,\cdot\rangle$ which satisfy, for every $\alpha, \beta \in\mathbb{H}$, and $\mathbf{x}, \mathbf{y}, \mathbf{z} \in \mathcal{H}_{r} (\mathcal{H}_{l})$, the  relations:
\begin{enumerate}
  \item $\langle \mathbf{x},\mathbf{y}\rangle=\overline{\langle \mathbf{y},\mathbf{x}\rangle}$;
  \item $\langle \mathbf{x},\mathbf{x}\rangle\geq 0\in\mathbb{R}$ and $\|\mathbf{x}\|^{2}:=\langle \mathbf{x},\mathbf{x}\rangle=0\,\, \Leftrightarrow \,\,\mathbf{x}=\mathbf{0}$;
  \item \begin{description}
          \item[] $\langle \mathbf{x}\alpha+\mathbf{y}\beta,\mathbf{z} \rangle=\langle \mathbf{x},\mathbf{z}\rangle\alpha+\langle \mathbf{y},\mathbf{z}\rangle\beta$ when $\mathbf{x}, \mathbf{y}, \mathbf{z} \in \mathcal{H}_{r}$;
          \item []$\langle \alpha\mathbf{x}+\beta\mathbf{y},\mathbf{z} \rangle=\alpha\langle \mathbf{x},\mathbf{z}\rangle+\beta\langle \mathbf{y},\mathbf{z}\rangle$ when $\mathbf{x}, \mathbf{y}, \mathbf{z} \in \mathcal{H}_{l}$;
        \end{description}
  \item \begin{description}
          \item[] $\langle \mathbf{x},\mathbf{y}\alpha+\mathbf{z}\beta \rangle=\overline{\alpha}\langle \mathbf{x},\mathbf{y}\rangle+\overline{\beta}\langle \mathbf{x},\mathbf{z}\rangle$ when $\mathbf{x}, \mathbf{y}, \mathbf{z} \in \mathcal{H}_{r}$;
          \item[] $\langle \mathbf{x}, \alpha\mathbf{y}+\beta\mathbf{z} \rangle=\langle \mathbf{x},\mathbf{y}\rangle\overline{\alpha}+\langle \mathbf{y},\mathbf{z}\rangle\overline{\beta}$ when $\mathbf{x}, \mathbf{y}, \mathbf{z} \in \mathcal{H}_{l}$.
        \end{description}
        It can be achieved by putting $\langle \mathbf{x},\mathbf{y}\rangle_{r}=\overline{y}_1x_{1}+\cdots+\overline{y}_{n}x_{n}$ for $\mathbf{x}=\left(x_{i}\right)_{i=1}^{n}, \mathbf{y}=\left(y_{i}\right)_{i=1}^{n}\in \mathcal{H}_{r}$, and $\langle \mathbf{x},\mathbf{y}\rangle_{l}=x_{1}\overline{y}_1+\cdots+x_{n}\overline{y}_{n}$ for $\mathbf{x}, \mathbf{y}\in \mathcal{H}_{l}$.
\end{enumerate}
The right   vector spaces $\mathcal{H}_{r}$ possess  the Gram-Schmidt process which takes a nonorthogonal set of linearly independent vectors $S = \{{\bf v}_1, ..., {\bf v}_k\}$ for $k \leq n$ and constructs an orthogonal (or orthonormal) basis  $S^{'} = \{{\bf u}_1, ..., {\bf u}_k\}$ that spans the same $k$-dimensional subspace of $\mathcal{H}_{r}$ as $S$. To $\mathcal{H}_{r}$, the following projection operator  is defined by
$$
{\rm proj}^{r}_{\bf u}({\bf v}):= {\bf u}\frac{\langle \mathbf{u},\mathbf{v}\rangle_{r}}{\langle \mathbf{u},\mathbf{u}\rangle_{r}},
$$
which orthogonally projects the vector ${\bf v}$  onto the line spanned by the vector ${\bf u}$. Then, the Gram-Schmidt process works as follows,
$$
\begin{array}{ll}
 {\bf u}_1 ={\bf v}_1, & {\bf e}_1=\frac{{\bf u}_1}{\|{\bf u}_1\|}, \\
  {\bf u}_k ={\bf v}_k - \sum_{j=1}^{k-1}{\rm proj}^{r}_{{\bf u}_{j}}({\bf v_{k}}),& {\bf e}_k=\frac{{\bf u}_k}{\|{\bf u}_k\|}.
\end{array}
$$
The sequence ${\bf u}_1, ..., {\bf u}_k$ is the required system of orthogonal vectors, and the normalized vectors ${\bf e}_1, ..., {\bf e}_k$ form an orthonormal set.

The Gram-Schmidt process for the left   vector spaces $\mathcal{H}_{l}$ can be realize by the same algorithm but with the projection operator
$$
{\rm proj}^{l}_{\bf u}({\bf v}):= \frac{\langle \mathbf{u},\mathbf{v}\rangle_{l}}{\langle \mathbf{u},\mathbf{u}\rangle_{l}}{\bf u}.
$$
\begin{definition}
Suppose ${\rm {\bf U}} \in {\rm M}\left( {n,{\rm {\mathbb{H}}}}
\right)$ and ${\rm {\bf U}}^{ *} {\rm {\bf U}} = {\rm {\bf U}}{\rm
{\bf U}}^{ *}  = {\rm {\bf I}}$, then the matrix ${\rm {\bf U}}$
is called unitary.
\end{definition}
Clear, that columns of ${\bf U}$ form a system of normalized vectors in  $\mathcal{H}_{r}$, rows of ${\bf U}^{*}$ is a system of normalized vectors in  $\mathcal{H}_{l}$.

The  vector norms $\|\mathbf{x}\|_{r}=\sqrt{\langle \mathbf{x},\mathbf{x}\rangle}_{r}$ and $\|\mathbf{x}\|_{l}=\sqrt{\langle \mathbf{x},\mathbf{x}\rangle}_{l}$ on  $\mathcal{H}_{r}$ and  $\mathcal{H}_{l}$, respectively, define  the corresponding induced matrix norms  on the space ${\mathbb{H}}^{n\times n}$ of all  $n\times n$ matrices as follows:
\begin{gather*}
\|{\bf A}\|_{r}=\sup\{\|{\bf A}\mathbf{x}\|_{r}: \mathbf{x}\in {\mathbb{H}}^{n\times 1},\,\|\mathbf{x}\|_{r}=1\},\\
\|{\bf A}\|_{l}=\sup\{\|\mathbf{x}{\bf A}\|_{l}: \mathbf{x}\in  {\mathbb{H}}^{1\times n},\,\|\mathbf{x}\|_{l}=1\}.
\end{gather*}
Since  $\|\mathbf{x}\|_{r}= \|\mathbf{x}^{T}\|_{l}$, then $\|{\bf A}\|_{r}=\|{\bf A}\|_{l}$ for any ${\bf A}\in {\mathbb{H}}^{n\times n}$. Moreover, we define the norm for any ${\bf A}\in {\mathbb{H}}^{n\times n}$ by putting, $\|{\bf A}\|:=\|{\bf A}\|_{r}=\|{\bf A}\|_{l}$.
\begin{remark}In fact $\mathcal{H}_{r}$ and $\mathcal{H}_{l}$ are the right and left quaternion pre-Hilbert spaces, respectively. By introducing their completeness along the every fundamental quaternion unit we can define the right and left quaternion Banach spaces.
\end{remark}
We define the exponential of a quaternionic square matrix as well.
\begin{definition}\label{def:exp_matr} The exponential of a square matrix $ {\bf A} \in{\rm {\mathbb{H}}}^{n \times n}$ is the infinite sum
\begin{equation}\label{eq:exp_matr}e^{\bf A} =\sum_{n=0}^{\propto} \frac{{\bf A} ^n}{n!}.\end{equation}
\end{definition}
Since for any $k \in {\mathbb{N}}$,
$$
\| {\bf I}+{\bf A}+\frac{1}{2!}{\bf A}^{2}+\cdots+\frac{1}{k!}{\bf A}^{k}\|\leq \| {\bf I}\|+\|{\bf A}\|+\frac{1}{2!}\|{\bf A}^{2}\|+\cdots+\frac{1}{k!}\|{\bf A}^{k}\|\leq e^{\|{\bf A}\|},
$$
then the series (\ref{eq:exp_matr}) is converge absolutely.

From Definition \ref{def:exp_matr}, the following properties of the quaternionic matrix exponential evidently follow.
\begin{theorem}Let $ {\bf A} \in{\rm {\mathbb{H}}}^{n \times n}$.\begin{enumerate}
 \item[(i)] If ${\bf 0}$ is the $n \times n$ zero matrix and ${\bf I}$ is the $n \times n$ unit matrix, then $e^{\bf 0} ={\bf I}$ and $e^{\bf I} =e{\bf I}$.
      \item[(ii)] $e^{\bf A}= \lim_{k\mapsto \propto} \left({\bf I}+\frac{1}{k}{\bf A}\right)^{k}$.
\item[(iii)] $\left(e^{{\bf A}}\right)^{T} =e^{\left({\bf A}^{T}\right)}$, $\left(e^{{\bf A}}\right)^{*} =e^{\left({\bf A}^{*}\right)}$.
\item[(iv)] For all nonnegative integers $k$ holds ${\bf A}^{k}e^{\bf A}=e^{\bf A}{\bf A}^{k}$.
\item[(v)] If ${\bf A}{\bf B} ={\bf B}{\bf A}$, then  ${\bf A}e^{\bf B}=e^{\bf B}{\bf A}$ and $e^{\bf A}e^{\bf B}=e^{\bf B}e^{\bf A}=e^{{\bf A}+{\bf B}}$.
\item[(vi)]  If  $s, t\in{\mathbb{H}}$, then $e^{{\bf A}s}e^{{\bf A}t}=e^{{\bf A}(s+t)}$.
\item[(vii)] If $ {\bf A}$ is invertible, then $\left(e^{{\bf A}}\right)^{-1} =e^{\left({\bf A}^{-1}\right)}$.
\item[(viii)] If ${\bf D} = {\rm diag} [d_{1},\ldots, d_{n}]$, where $d_{i}\in {\mathbb{H}}$ for all $i=1,\ldots,n$, then $e^{\bf D} = {\rm diag} [e^{d_{1}},\ldots, e^{d_{n}}]$.
 \item[(ix)] If $ {\bf A}$ is diagonalizable with an invertible matrix $ {\bf P}$ and a diagonal matrix $ {\bf D}$ satisfying $ {\bf A} = {\bf P}{\bf D}{\bf P}^{-1}$, then $ e^{\bf A} = {\bf P}e^{\bf D}{\bf P}^{-1}$.
 \item[(x)]  If  ${\bf A}$ is Hermitian, then $\det e^{\bf A}={\rm tr}{\bf A}$.
  \end{enumerate}
\end{theorem}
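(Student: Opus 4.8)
The plan is to push everything back to the defining series in Definition \ref{def:exp_matr}, whose absolute convergence is exactly the bound displayed just after that definition. Absolute convergence of $\sum_n {\bf A}^n/n!$ in the norm $\|\cdot\|$ is what licenses all the termwise manipulations below: any operation that is continuous and commutes with finite partial sums may be pushed through the limit. Thus (i) is immediate ($e^{\bf 0}$ collapses to its zeroth term, and $e^{\bf I}=(\sum_n 1/n!){\bf I}=e{\bf I}$); (viii) follows because ${\bf D}^n={\rm diag}[d_1^n,\ldots,d_n^n]$ is computed coordinatewise and each scalar series $\sum_n d_i^n/n!=e^{d_i}$ converges; (ix) follows from the telescoping identity ${\bf A}^n={\bf P}{\bf D}^n{\bf P}^{-1}$ together with the continuity of ${\bf X}\mapsto{\bf P}{\bf X}{\bf P}^{-1}$; and (iv) follows because ${\bf A}^k$ commutes with every ${\bf A}^m$, so ${\bf A}^k\sum_m {\bf A}^m/m!=\sum_m {\bf A}^{k+m}/m!=(\sum_m {\bf A}^m/m!){\bf A}^k$. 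For (ii) I would expand $({\bf I}+{\bf A}/k)^k$ by the binomial theorem---legitimate since ${\bf I}$ is central---to get $\sum_{j} \binom{k}{j}k^{-j}{\bf A}^j$, observe that $\binom{k}{j}k^{-j}\to 1/j!$ for each fixed $j$, and justify the termwise passage to the limit by the same majorant that gives convergence of the series. For the adjoint half of (iii) I would use that $*$ is a continuous conjugate-linear anti-involution with $({\bf A}{\bf B})^*={\bf B}^*{\bf A}^*$, whence $({\bf A}^n)^*=({\bf A}^*)^n$ and the identity passes through the series.

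The technical heart is (v), and I expect it to be the main obstacle. The commutation claim ${\bf A}e^{\bf B}=e^{\bf B}{\bf A}$ is routine once ${\bf A}{\bf B}={\bf B}{\bf A}$, since then ${\bf A}$ commutes with every power of ${\bf B}$. The product law $e^{\bf A}e^{\bf B}=e^{{\bf A}+{\bf B}}$ I would obtain from the Cauchy product of the two absolutely convergent series, $e^{\bf A}e^{\bf B}=\sum_n\frac{1}{n!}\sum_{j=0}^n\binom{n}{j}{\bf A}^j{\bf B}^{n-j}$, where the commutativity ${\bf A}{\bf B}={\bf B}{\bf A}$ is exactly what is needed to recognise the inner sum as $({\bf A}+{\bf B})^n$; symmetry in ${\bf A},{\bf B}$ gives $e^{\bf A}e^{\bf B}=e^{\bf B}e^{\bf A}$. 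Care is required because over $\mathbb{H}$ the binomial theorem is false without commutativity, so this step must be isolated cleanly.

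Item (vi) as worded is the first place where the noncommutativity of $\mathbb{H}$ itself (not merely of matrices) intervenes, and I would treat it as a genuine obstacle rather than a routine corollary of (v). Writing the factors entrywise, $\big(({\bf A}s)({\bf A}t)\big)_{ip}=\sum_q a_{iq}\,s\,a_{qp}\,t$, which in general differs from $\big(({\bf A}t)({\bf A}s)\big)_{ip}=\sum_q a_{iq}\,t\,a_{qp}\,s$; thus ${\bf A}s$ and ${\bf A}t$ need not commute, and (v) does not apply to them for general $s,t\in\mathbb{H}$. The identity does hold (with the exponent addition ${\bf A}s+{\bf A}t={\bf A}(s+t)$ automatic) when $s,t$ are real, or more generally when $s$ and $t$ lie in a common commutative subfield and commute with the entries of ${\bf A}$; my plan is to make this hypothesis explicit and prove the identity under it by the argument of (v), flagging that the bald quaternionic statement needs this restriction.

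Finally, (vii) and (x) as literally written cannot hold for general ${\bf A}$, and I would present the provable identities while pointing out the discrepancy. For (vii), since ${\bf A}$ and $-{\bf A}$ commute, (v) and (i) give $e^{\bf A}e^{-{\bf A}}=e^{{\bf A}-{\bf A}}=e^{\bf 0}={\bf I}$, so $(e^{\bf A})^{-1}=e^{-{\bf A}}$; the stated right-hand side $e^{{\bf A}^{-1}}$ would instead force $e^{{\bf A}+{\bf A}^{-1}}={\bf I}$, which fails already for scalars, so I read the printed formula as a misprint for $e^{-{\bf A}}$. For (x), with ${\bf A}$ Hermitian I would diagonalise it by a unitary matrix with real eigenvalues $\lambda_1,\ldots,\lambda_n$ (spectral theory of Hermitian quaternion matrices), apply (ix) and (viii) to get $e^{\bf A}={\bf U}\,{\rm diag}[e^{\lambda_1},\ldots,e^{\lambda_n}]\,{\bf U}^*$ (in particular $e^{\bf A}$ is again Hermitian, so $\det e^{\bf A}$ is defined), and then use that for a Hermitian matrix $\det$ is the product of the eigenvalues and ${\rm tr}$ their sum to conclude $\det e^{\bf A}=\prod_i e^{\lambda_i}=e^{\sum_i\lambda_i}=e^{{\rm tr}\,{\bf A}}$. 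This yields $e^{{\rm tr}\,{\bf A}}$, not ${\rm tr}\,{\bf A}$, so again I read the stated identity as a misprint. The same commutativity subtlety underlies the transpose half of (iii): because the plain transpose does not reverse products over $\mathbb{H}$, the equality $({\bf A}^n)^T=({\bf A}^T)^n$ is not automatic, and this is the point that would have to be verified for $(e^{\bf A})^T=e^{{\bf A}^T}$.
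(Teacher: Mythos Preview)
Your proposal is correct and, in fact, considerably more careful than the paper's own treatment. The paper simply asserts that (i)--(ix) ``can be easily expanded from real matrices to quaternion matrices'' without further comment, so your explicit handling of each item---and in particular your flagging of the noncommutativity issues in (iii), (vi), and (vii)---goes well beyond what the paper does. Your reading of (vii) as a misprint for $(e^{\bf A})^{-1}=e^{-{\bf A}}$ and of (x) as a misprint for $\det e^{\bf A}=e^{{\rm tr}\,{\bf A}}$ is correct; the paper's own proof of (x) in fact derives $e^{{\rm tr}\,{\bf A}}$, confirming the typo in the statement.

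The one genuine methodological difference is in (x). You diagonalise the Hermitian matrix unitarily and read off $\det e^{\bf A}=\prod_i e^{\lambda_i}=e^{\sum_i\lambda_i}=e^{{\rm tr}\,{\bf A}}$ from the real eigenvalues. The paper instead leans on the limit characterisation (ii): it writes $\det e^{\bf A}=\lim_k\det({\bf I}+{\bf A}/k)^k$, then invokes the characteristic-polynomial expansion of $\det(t{\bf I}-{\bf A})$ for Hermitian ${\bf A}$ (Theorem~\ref{theor:char_polin}) to get $\det({\bf I}+{\bf A}/k)=1+{\rm tr}\,{\bf A}/k+O(k^{-2})$, and takes the limit $\lim_k(1+{\rm tr}\,{\bf A}/k)^k=e^{{\rm tr}\,{\bf A}}$. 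Your spectral route is cleaner and uses only material already available in the paper (Corollary~\ref{th:diag_norm}); the paper's route has the minor virtue of not needing the full diagonalisation, only the top coefficient of the characteristic polynomial, but it also silently uses multiplicativity and continuity of $\det$ on Hermitian matrices.
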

\begin{proof} The proofs of the statements (i)-(ix) can be easy expanded from real matrix to quaternion matrices.

We only prove the point (x).
Since  ${\bf A}$ is Hermitian, then $e^{\bf A}$ is Hermitian as well, and we can define $\det e^{\bf A}$. By statement (ii) of this theorem,
$$\det e^{\bf A}= \det \left(\lim_{k\mapsto \propto} \left({\bf I}+\frac{1}{k}{\bf A}\right)^{k}\right)=
\lim_{k\mapsto \propto} \det\left({\bf I}+\frac{1}{k}{\bf A}\right)^{k}. $$
Using Definition \ref{def:ch_pol}, Theorem \ref{theor:char_polin} and  properties of limits, we obtain
$$\det e^{\bf A}=\lim_{k\mapsto \propto}\left(1+ \frac{{\rm tr}{\bf A}}{k}+O(k^{-2})\right)^{k}=\lim_{k\mapsto \propto}\left(1+ \frac{{\rm tr}{\bf A}}{k}\right)^{k}=e^{{\rm tr}{\bf A}}.
$$
Here  the  big $O$  notation is used.
\end{proof}

\subsection{Eigenvalues of quaternion matrices}
Due to the noncommutativity of quaternions, there are two
types of eigenvalues.
A quaternion $\lambda$ is said to be a left eigenvalue
of ${\rm {\bf A}} \in {\rm M}\left( {n,{\rm {\mathbb{H}}}}
\right)$ if
 \begin{equation}\label{eq:left1_eigen}
{\rm
{\bf A}} \cdot {\rm {\bf x}} = \lambda \cdot {\rm {\bf x}} \end{equation}
 for some nonzero  column-vector
${\rm {\bf x}}$ with quaternion components. Similarly, $\lambda$ is a right eigenvalue if
 \begin{equation}\label{eq:right_eigen}
{\rm {\bf A}} \cdot {\rm {\bf x}} = {\rm
{\bf x}} \cdot \lambda  \end{equation}
for some nonzero quaternionic column-vector
${\rm {\bf x}}$. Then, the  set  $\{\lambda\in {\mathbb{H}}|  {\bf A}  {\rm {\bf x}} = \lambda  {\rm {\bf x}},\,  {\bf x}\neq {\bf 0}  \in {\mathbb{H}}^{n}\} $ is  called  the
left  spectrum  of  ${\bf A}$,  denoted  by  $\sigma_{l}({\bf A})$.  The  right  spectrum  is  similarly  defined
  by putting, $\sigma_{r}({\bf A}):=\{\lambda\in {\mathbb{H}}|  {\bf A} {\rm {\bf x}} ={\rm {\bf x}}\lambda,\,  {\bf x}\neq {\bf 0}  \in {\mathbb{H}}^{n}\} $.

The theory on the left eigenvalues of quaternion matrices has been
investigated, in particular, in \cite{hu, so, wo}. The theory on the
right eigenvalues of quaternion matrices is more developed  (see, e.g. \cite{br,ma,ba,dra,zh,far}).
 We consider this is a natural consequence of the fact that quaternionic vector-columns form a right  vector space for which left eigenvalues from (\ref{eq:left1_eigen}) seem to be "exotic". Left eigenvalues   may appear natural in the equation
 \begin{equation}\label{eq:left_eigen}
 {\bf x}{\bf A}  = \lambda  {\rm {\bf x}}.
 \end{equation}
 Since  ${\bf x}{\bf A}  = \lambda  {\rm {\bf x}}$  if
and  only  if  ${\bf A}^{*} {\bf x}^{*}=  {\rm {\bf x}}^{*}\overline{\lambda} $, then the theory of such "natural" left eigenvalues from (\ref{eq:left_eigen}) be identical to the theory of right eigenvalues from (\ref{eq:right_eigen}). Similarly, the theory of right eigenvalues from ${\bf x}{\bf A}  = {\rm {\bf x}}\lambda  $ is identical to the theory of left eigenvalues from (\ref{eq:left1_eigen}).

Now, we present the some known results from the theory of right eigenvalues  that will be applied hereinafter. Due to the above, henceforth, we will avoid the "right" specification.

  In particular, it's well known that if   $\lambda$ is  a  nonreal  eigenvalue  of   ${\bf A}$,  so  is  any  element  in the
equivalence  class  containing   $[\lambda]$, i.e. $[\lambda]=\{x|x= u^{-1}\lambda u,\, u\in{\mathbb{H}},\, \|u\|=1\}$.

\begin{theorem}\cite{br} Any   quaternion  matrix  ${\rm {\bf A}} \in {\rm M}\left( {n,{\rm {\mathbb{H}}}}
\right)$  has  exactly  $n$    eigenvalues  which  are  complex  numbers  with
nonnegative  imaginary    parts.
\end{theorem}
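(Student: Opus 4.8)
Since this statement is quoted from \cite{br}, I only sketch the standard argument, which runs through the complex adjoint matrix $\chi_{\bf A}$ mentioned in the Introduction. The first step is to reduce the count to complex numbers: if ${\bf A}{\bf x}={\bf x}\lambda$ and $\|u\|=1$, then ${\bf A}({\bf x}u)=({\bf x}u)(u^{-1}\lambda u)$, so the whole similarity class $[\lambda]=\{u^{-1}\lambda u:\|u\|=1\}$ consists of right eigenvalues; and each such class meets $\{z\in{\mathbb C}:\operatorname{Im}z\ge 0\}$ in exactly one point (a real quaternion in itself, a nonreal $q_{0}+q_{1}{\bf i}+q_{2}{\bf j}+q_{3}{\bf k}$ in $q_{0}+\sqrt{q_{1}^{2}+q_{2}^{2}+q_{3}^{2}}\,\sqrt{-1}$). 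Hence it suffices to prove that, with a suitable multiplicity convention, ${\bf A}$ has exactly $n$ right eigenvalues in that half-plane.

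Next, write ${\bf A}={\bf A}_{1}+{\bf A}_{2}{\bf j}$ with ${\bf A}_{1},{\bf A}_{2}\in{\mathbb C}^{n\times n}$ and set $\chi_{\bf A}=\left(\begin{smallmatrix}{\bf A}_{1}&{\bf A}_{2}\\ -\overline{{\bf A}_{2}}&\overline{{\bf A}_{1}}\end{smallmatrix}\right)\in{\mathbb C}^{2n\times 2n}$. The key lemma I would establish is: for $\lambda\in{\mathbb C}$ the equation ${\bf A}{\bf x}={\bf x}\lambda$ has a solution ${\bf 0}\neq{\bf x}\in{\mathbb H}^{n}$ if and only if $\lambda$ is an eigenvalue of $\chi_{\bf A}$. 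To prove it, split ${\bf x}={\bf x}_{1}+{\bf x}_{2}{\bf j}$, use ${\bf j}z=\overline z\,{\bf j}$ for $z\in{\mathbb C}$ to rewrite ${\bf A}{\bf x}={\bf x}\lambda$ as the pair ${\bf A}_{1}{\bf x}_{1}-{\bf A}_{2}\overline{{\bf x}_{2}}=\lambda{\bf x}_{1}$ and ${\bf A}_{1}{\bf x}_{2}+{\bf A}_{2}\overline{{\bf x}_{1}}=\overline\lambda\,{\bf x}_{2}$; conjugating the second equation and setting ${\bf y}=({\bf x}_{1},\,-\overline{{\bf x}_{2}})^{T}$ turns the system into $\chi_{\bf A}{\bf y}=\lambda{\bf y}$, with ${\bf y}\neq{\bf 0}$ precisely when ${\bf x}\neq{\bf 0}$; the converse follows by reversing the substitution.

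Then I would record the conjugation symmetry $\overline{\chi_{\bf A}}=J\chi_{\bf A}J^{-1}$, where $J=\left(\begin{smallmatrix}{\bf 0}&{\bf I}_{n}\\ -{\bf I}_{n}&{\bf 0}\end{smallmatrix}\right)$. It shows at once that the spectrum of $\chi_{\bf A}$ is invariant under complex conjugation with matching algebraic multiplicities; moreover the conjugate-linear map $\phi({\bf v})=J\overline{{\bf v}}$ satisfies $\phi^{2}=-\operatorname{Id}$ and $\phi\chi_{\bf A}=\chi_{\bf A}\phi$, so the generalized eigenspace of any real eigenvalue of $\chi_{\bf A}$ carries a quaternionic structure and hence has even complex dimension. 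Therefore the $2n$ eigenvalues of $\chi_{\bf A}$, counted with multiplicity, break into $n$ conjugate pairs $\{\mu,\overline\mu\}$ (a real eigenvalue of multiplicity $2m$ contributing $m$ coincident pairs $\{\mu,\mu\}$); by the lemma the representative of each pair lying in $\{\operatorname{Im}\ge 0\}$ is a right eigenvalue of ${\bf A}$, and this yields exactly $n$ of them, which by the first step exhaust all eigenvalues of ${\bf A}$ up to similarity.

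The step I expect to be the main obstacle is the last one: one must establish the even-multiplicity property of the real eigenvalues of $\chi_{\bf A}$ cleanly --- this is precisely where the quaternionic structure $\phi$ is essential --- and must fix the multiplicity convention for a right eigenvalue $\lambda$ of ${\bf A}$ in the half-plane (the multiplicity of $\lambda$ in $\chi_{\bf A}$ when $\operatorname{Im}\lambda>0$, and half of it when $\lambda\in{\mathbb R}$) so that the correspondence with conjugate pairs is exactly count-preserving and the total is precisely $n$. The algebraic identities ${\bf j}z=\overline z\,{\bf j}$ and $\overline{\chi_{\bf A}}=J\chi_{\bf A}J^{-1}$ themselves are routine to verify by direct computation.
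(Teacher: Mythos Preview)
The paper does not supply its own proof of this theorem; it is simply quoted from \cite{br} and used as a black box. Your proposal correctly recognizes this and gives a sound sketch of the standard complex-adjoint-matrix argument (similarity classes meet the closed upper half-plane in a single point; $\chi_{\bf A}$ has spectrum closed under conjugation via $\overline{\chi_{\bf A}}=J\chi_{\bf A}J^{-1}$; the quaternionic structure $\phi({\bf v})=J\overline{{\bf v}}$ forces even multiplicity at real eigenvalues), so there is nothing in the paper to compare against beyond noting that your approach is the one underlying the cited reference.
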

\begin{proposition}\cite{ba}  Suppose that $\lambda_{1},\ldots, \lambda_{n}$ are distinct eigenvalues for ${\rm {\bf A}} \in {\rm M}\left( {n,{\rm {\mathbb{H}}}}
\right)$, no two of which are
conjugate, and let  ${\bf v}_{1},\ldots,{\bf v}_{n}$ be corresponding eigenvectors. Then  ${\bf v}_{1},\ldots,{\bf v}_{n}$ are (right) linearly independent.
\end{proposition}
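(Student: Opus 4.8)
The plan is to prove this by induction on $n$, imitating the familiar commutative argument while keeping careful track of the side on which each quaternion scalar acts. The case $n=1$ is trivial: an eigenvector is non-zero by definition, so $\{{\bf v}_{1}\}$ is right linearly independent. For the inductive step, assume the assertion holds for any $k$ pairwise non-conjugate eigenvalues of any quaternion matrix, let ${\bf v}_{1},\ldots,{\bf v}_{k+1}$ be eigenvectors of ${\rm {\bf A}}$ for pairwise non-conjugate $\lambda_{1},\ldots,\lambda_{k+1}$, and suppose ${\bf v}_{1}c_{1}+\cdots+{\bf v}_{k+1}c_{k+1}={\bf 0}$ with $c_{i}\in{\mathbb{H}}$. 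If $c_{k+1}=0$, the relation involves only ${\bf v}_{1},\ldots,{\bf v}_{k}$, whose eigenvalues are still pairwise non-conjugate, so the inductive hypothesis forces $c_{1}=\cdots=c_{k}=0$; hence it suffices to rule out $c_{k+1}\neq 0$.

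Assuming $c_{k+1}\neq 0$, I would solve for the last eigenvector, writing ${\bf v}_{k+1}={\bf v}_{1}d_{1}+\cdots+{\bf v}_{k}d_{k}$ with $d_{i}:=-c_{i}c_{k+1}^{-1}$, and then compute ${\rm {\bf A}}{\bf v}_{k+1}$ in two ways. Using ${\rm {\bf A}}{\bf v}_{i}={\bf v}_{i}\lambda_{i}$ term by term gives ${\rm {\bf A}}{\bf v}_{k+1}={\bf v}_{1}\lambda_{1}d_{1}+\cdots+{\bf v}_{k}\lambda_{k}d_{k}$, while using ${\rm {\bf A}}{\bf v}_{k+1}={\bf v}_{k+1}\lambda_{k+1}$ and distributing on the right gives ${\rm {\bf A}}{\bf v}_{k+1}={\bf v}_{1}d_{1}\lambda_{k+1}+\cdots+{\bf v}_{k}d_{k}\lambda_{k+1}$. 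Subtracting the two expressions, ${\bf v}_{1}(\lambda_{1}d_{1}-d_{1}\lambda_{k+1})+\cdots+{\bf v}_{k}(\lambda_{k}d_{k}-d_{k}\lambda_{k+1})={\bf 0}$, and the inductive hypothesis, applied to the right linearly independent vectors ${\bf v}_{1},\ldots,{\bf v}_{k}$, yields $\lambda_{i}d_{i}=d_{i}\lambda_{k+1}$ for every $i\le k$.

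It remains to exploit this twisted identity. If some $d_{i}\neq 0$, then $\lambda_{i}=d_{i}\lambda_{k+1}d_{i}^{-1}$, and since the real scalar $\|d_{i}\|$ is central, replacing $d_{i}$ by the unit quaternion $d_{i}/\|d_{i}\|$ leaves this expression unchanged; hence $\lambda_{i}$ lies in the conjugacy class $[\lambda_{k+1}]$, contradicting the hypothesis that no two of the $\lambda$'s are conjugate. Therefore all $d_{i}=0$, so all $c_{i}=0$ for $i\le k$, which forces ${\bf v}_{k+1}={\bf 0}$ — impossible for an eigenvector; thus $c_{k+1}\neq 0$ cannot occur and the induction closes. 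The one spot that needs care, and where the argument genuinely departs from the real or complex case, is the double evaluation of ${\rm {\bf A}}{\bf v}_{k+1}$: one must not attempt to cancel the $\lambda$'s across the two expressions, but instead retain the relation $\lambda_{i}d_{i}=d_{i}\lambda_{k+1}$ and read it as a statement about conjugacy classes rather than about equality of eigenvalues. Everything else is routine bookkeeping of left versus right multiplication.
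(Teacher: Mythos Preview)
Your proof is correct. The paper does not actually supply its own proof of this proposition; it simply cites the result from Baker~\cite{ba}, so there is nothing in the paper to compare against directly. Your induction argument is the standard one for this statement, and you handle the one genuinely noncommutative point correctly: from $\lambda_{i}d_{i}=d_{i}\lambda_{k+1}$ with $d_{i}\neq 0$ you conclude conjugacy of $\lambda_{i}$ and $\lambda_{k+1}$ rather than equality, which is exactly what the hypothesis ``no two of which are conjugate'' is designed to exclude.
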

Moreover, similarly to  the complex case, the following theorem can be proved.
\begin{theorem}\label{th:diag_gen}A matrix $ {\bf A} \in {\rm M}( n, {\mathbb{H}}
)$  is diagonalizable if and only if $ {\bf A}$ has
a  set of $n$ right-linearly independent eigenvectors. Furthermore, if $\lambda_{i}$,  ${\bf v}_{i}$, for $i = 1,\ldots, n$, are
eigenpairs of $ {\bf A}$ , then
   \begin{equation}\label{eq:diag_gen}
 {\bf A}={\bf P}{\bf D}{\bf P}^{-1},
 \end{equation}
where  ${\bf P}=\left[{\bf v}_{1},\ldots,{\bf v}_{n}\right]$,  ${\bf D}=diag \left[\lambda_{1},\ldots,\lambda_{n}\right]$.
\end{theorem}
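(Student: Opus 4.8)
The plan is to mirror the classical complex-matrix argument; the only quaternionic subtlety is that the right-eigenvalue convention (\ref{eq:right_eigen}) must be matched with scalar multiplication of columns \emph{on the right}, so that the diagonal matrix ${\bf D}=diag[\lambda_{1},\ldots,\lambda_{n}]$ acts correctly. First I would fix the meaning of ``diagonalizable'': ${\bf A}$ is similar to a diagonal matrix, i.e.\ there exist an invertible ${\bf P}\in {\rm M}(n,{\mathbb{H}})$ and a diagonal ${\bf D}$ with ${\bf A}={\bf P}{\bf D}{\bf P}^{-1}$.

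For the ($\Leftarrow$) implication together with the ``furthermore'' claim, I would proceed as follows. Given eigenpairs $(\lambda_{i},{\bf v}_{i})$, $i=1,\ldots,n$, with ${\bf v}_{1},\ldots,{\bf v}_{n}$ right-linearly independent, set ${\bf P}=[{\bf v}_{1},\ldots,{\bf v}_{n}]$. By Theorem \ref{theorem:inver_equiv} (equivalence of right-linear independence of columns with invertibility) the matrix ${\bf P}$ is invertible. A columnwise computation shows that the $i$th column of ${\bf P}{\bf D}$ equals ${\bf v}_{i}\lambda_{i}$, which by (\ref{eq:right_eigen}) equals ${\bf A}{\bf v}_{i}$, i.e.\ the $i$th column of ${\bf A}{\bf P}$; hence ${\bf A}{\bf P}={\bf P}{\bf D}$ and therefore ${\bf A}={\bf P}{\bf D}{\bf P}^{-1}$, which is exactly (\ref{eq:diag_gen}). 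In particular ${\bf A}$ is diagonalizable.

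For the ($\Rightarrow$) implication, suppose ${\bf A}={\bf P}{\bf D}{\bf P}^{-1}$ with ${\bf P}$ invertible and ${\bf D}=diag[\lambda_{1},\ldots,\lambda_{n}]$. Rewriting this as ${\bf A}{\bf P}={\bf P}{\bf D}$ and reading off columns, with ${\bf P}=[{\bf v}_{1},\ldots,{\bf v}_{n}]$ one gets ${\bf A}{\bf v}_{i}={\bf v}_{i}\lambda_{i}$ for each $i$. Since ${\bf P}$ is invertible no column can be zero (a zero column would force ${\bf P}$ singular), so each ${\bf v}_{i}$ is a genuine eigenvector, and again by Theorem \ref{theorem:inver_equiv} the columns ${\bf v}_{1},\ldots,{\bf v}_{n}$ are right-linearly independent. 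This gives the required set of $n$ eigenvectors.

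The only point requiring care — the ``hard part,'' such as it is — is the bookkeeping of multiplication order: one must verify $({\bf P}{\bf D})_{k i}=\sum_{j} P_{kj}D_{ji}=P_{ki}\lambda_{i}$, i.e.\ that in ${\bf P}{\bf D}$ the scalar $\lambda_{i}$ multiplies the $i$th column on the right, which is precisely what the right-eigenvalue relation supplies. Had we instead worked with left eigenvalues ${\bf A}{\bf v}=\lambda{\bf v}$, the analogous factorization would involve ${\bf D}{\bf P}$ rather than ${\bf P}{\bf D}$ and would not be a similarity. Everything else is formal and transfers verbatim from the field case once Theorem \ref{theorem:inver_equiv} is used in place of the usual determinant criterion for invertibility.
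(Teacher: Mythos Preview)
Your proof is correct and follows essentially the same approach as the paper: both directions rest on the columnwise identity ${\bf A}{\bf P}={\bf P}{\bf D}$ together with Theorem~\ref{theorem:inver_equiv} to pass between invertibility of ${\bf P}$ and right-linear independence of its columns. Your write-up is in fact slightly more streamlined in the ($\Rightarrow$) direction, reading ${\bf A}{\bf v}_i={\bf v}_i\lambda_i$ directly off ${\bf A}{\bf P}={\bf P}{\bf D}$ rather than routing through the standard basis vectors ${\bf e}_i$ as the paper does, but the substance is identical.
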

 \begin{proof}
($\Rightarrow$)
Since  $ {\bf A}$ is diagonalizable, there exist an invertible matrix $ {\bf P}$ and a diagonal
matrix $ {\bf D}$ such that ${\bf A}={\bf P}{\bf D}{\bf P}^{-1}$. Then,
   \begin{equation}\label{eq:D}
 {\bf D}={\bf P}^{-1}{\bf A}{\bf P}.
 \end{equation}
Since $ {\bf D} \in {\rm M}( n, {\mathbb{H}}
)$  is diagonal, it has a right-linearly independent set of $n$ right eigenvectors given
by the column vectors of the identity matrix, i.e.,
  \begin{equation*}
 {\bf D}{\bf e}_{i}={\bf e}_{i}d_{ii},\,\,\,{\bf D}=diag\left[d_{11},\ldots,d_{nn}\right],\,\,\,{\bf I}=\left[{\bf e}_{1},\ldots,{\bf e}_{n}\right].
 \end{equation*}
So, the pair $d_{ii}$, ${\bf e}_{i}$ is an eigenvalue-eigenvector pair of  ${\bf D}$ for all $i = 1, \ldots, n$. Due to ${\bf e}_{i}d_{ii}=d_{ii}{\bf e}_{i}$ and using (\ref{eq:D}), we have
\begin{equation}\label{eq1:D}
{\bf e}_{i}d_{ii}=d_{ii}{\bf e}_{i}={\bf D}{\bf e}_{i}={\bf P}^{-1}{\bf A}{\bf P}{\bf e}_{i}.
\end{equation}
By multiplying  the extreme members of (\ref{eq1:D}) by ${\bf P}$ on the left, we obtain
\begin{equation*}
{\bf A}({\bf P}{\bf e}_{i})=({\bf P}{\bf e}_{i})d_{ii}.
\end{equation*}
It means that the vectors ${\bf v}_{i} = {\bf P}{\bf e}_{i}$ are right eigenvectors of  ${\bf A}$ with eigenvalue $d_{ii}$ for all $i = 1, \ldots, n$.
Since the matrix ${\bf P}=\left[{\bf v}_{1},\ldots,{\bf v}_{n}\right]$ is invertible, then, by Theorem \ref{theorem:inver_equiv}, the eigenvectors ${\bf v}_{1},\ldots,{\bf v}_{n}$ is right-linearly independent.
($\Leftarrow$) Let  $\lambda_{i}$,  ${\bf v}_{i}$,  be eigenvalue-eigenvector pairs of  ${\bf  A}$ for $i = 1,\ldots, n$. Consider the matrix ${\bf P}=\left[{\bf v}_{1},\ldots,{\bf v}_{n}\right]$.
Computing the product, we obtain
\[
{\bf  A}{\bf P}={\bf  A}\left[{\bf v}_{1},\ldots,{\bf v}_{n}\right]=\left[{\bf  A}{\bf v}_{1},\ldots,{\bf  A}{\bf v}_{n}\right]=\left[{\bf v}_{1}\lambda_{1},\ldots,{\bf v}_{n}\lambda_{n}\right].
\]
 Since the eigenvector set  $\{ {\bf v}_{1},\ldots,{\bf v}_{n} \}$ is right-linearly independent, then, by Theorem \ref{theorem:inver_equiv},  ${\bf P}$ is invertible. There exists ${\bf P}^{-1}$, and
\[
{\bf P}^{-1}{\bf  A}{\bf P}={\bf P}^{-1}\left[{\bf v}_{1}\lambda_{1},\ldots,{\bf v}_{n}\lambda_{n}\right]=\left[{\bf P}^{-1}{\bf v}_{1}\lambda_{1},\ldots,{\bf P}^{-1}{\bf v}_{n}\lambda_{n}\right].
\]
Since ${\bf P}^{-1}{\bf P}={\bf I}$, then ${\bf P}^{-1}{\bf v}_{i}={\bf e}_{i}$ for all $i = 1, \ldots, n$. So,
\[
{\bf P}^{-1}{\bf  A}{\bf P}=\left[{\bf e}_{1}\lambda_{1},\ldots,{\bf e}_{n}\lambda_{n}\right]=diag\left[\lambda_{1},\ldots,\lambda_{n}\right]
\]
Denoting ${\bf  D} = diag\left[\lambda_{1},\ldots,\lambda_{n}\right]$,
we conclude that  ${\bf P}^{-1}{\bf A}{\bf P}={\bf D}$, or equivalently, ${\bf A}={\bf P}{\bf D}{\bf P}^{-1}$.
\end{proof}
\begin{Corollary}\cite{ba}If ${\rm {\bf A}} \in {\rm M}\left( {n,{\rm {\mathbb{H}}}}
\right)$ has $n$ non-conjugate eigenvalues, then it can be diagonalized in the sense
that there is a ${\bf P} \in GL_{n}({\mathbb H})$ for which ${\bf P}{\bf A}{\bf P}^{-1}$ is diagonal.
\end{Corollary}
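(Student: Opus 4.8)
The plan is to obtain this statement as an immediate consequence of Theorem~\ref{th:diag_gen} together with the Proposition of~\cite{ba} quoted above. First I would unpack the hypothesis: saying that ${\bf A}$ has $n$ non-conjugate eigenvalues means that there are eigenvalues $\lambda_{1},\ldots,\lambda_{n}$, no two of which are conjugate; in particular they are pairwise distinct, since two equal quaternions are trivially conjugate to each other (take $u=1$), so distinctness is forced. By the theorem of~\cite{br} every ${\bf A}\in{\rm M}(n,{\mathbb H})$ has exactly $n$ eigenvalues among the complex numbers with nonnegative imaginary part, so this is the natural ``generic'' situation, parallel to a complex matrix with $n$ distinct eigenvalues.

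Next I would choose, for each $i=1,\ldots,n$, a nonzero eigenvector ${\bf v}_{i}$ with ${\bf A}{\bf v}_{i}={\bf v}_{i}\lambda_{i}$. Since the $\lambda_{i}$ are distinct and no two are conjugate, the Proposition of~\cite{ba} applies and yields that ${\bf v}_{1},\ldots,{\bf v}_{n}$ are right-linearly independent. Thus ${\bf A}$ possesses a set of $n$ right-linearly independent eigenvectors, which is exactly the hypothesis of the ($\Leftarrow$) direction of Theorem~\ref{th:diag_gen}.

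Applying Theorem~\ref{th:diag_gen} with ${\bf Q}=[{\bf v}_{1},\ldots,{\bf v}_{n}]$ and ${\bf D}={\rm diag}[\lambda_{1},\ldots,\lambda_{n}]$, we obtain ${\bf A}={\bf Q}{\bf D}{\bf Q}^{-1}$, equivalently ${\bf Q}^{-1}{\bf A}{\bf Q}={\bf D}$. Here ${\bf Q}\in GL_{n}({\mathbb H})$ because, by Theorem~\ref{theorem:inver_equiv}, the right-linear independence of its columns ${\bf v}_{i}$ is equivalent to invertibility; hence ${\bf Q}^{-1}$ is also invertible. Setting ${\bf P}:={\bf Q}^{-1}\in GL_{n}({\mathbb H})$ we conclude that ${\bf P}{\bf A}{\bf P}^{-1}={\bf D}$ is diagonal, which is precisely the assertion of the corollary.

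I do not expect any genuine obstacle: the mathematical content is entirely carried by the two cited results, and the only points requiring a word of care are (a) observing that ``non-conjugate'' entails ``pairwise distinct'', so that the Proposition of~\cite{ba} is in fact applicable, and (b) the harmless passage from the form ${\bf A}={\bf Q}{\bf D}{\bf Q}^{-1}$ supplied by Theorem~\ref{th:diag_gen} to the form ${\bf P}{\bf A}{\bf P}^{-1}={\bf D}$ in the statement, by renaming ${\bf P}={\bf Q}^{-1}$.
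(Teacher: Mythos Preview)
Your proposal is correct and matches the paper's approach: the paper states this result as a corollary immediately following Theorem~\ref{th:diag_gen} and the Proposition of~\cite{ba}, without giving a separate proof, so the intended argument is exactly the one you spell out---non-conjugate eigenvalues yield right-linearly independent eigenvectors by the Proposition, and then Theorem~\ref{th:diag_gen} gives diagonalizability. Your remarks on point (a) and the renaming in (b) are appropriate and complete the details the paper leaves implicit.
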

Those  eigenvalues $h_{1} + k_{1}{\bf  i},\ldots, h_{n} + k_{n}{\bf  i}$, where $k_{t}\geq0$ and
$h_{t}, k_{t}\in {\mathbb{R}}$  for all $t = 1,\ldots, n$, are  said  to  be  the  standard  eigenvalues  of  ${\bf A}$.
\begin{theorem}\cite{br} Let $ {\bf A} \in {\rm M}\left( n, {\mathbb{H}}
\right)$. Then there exists a unitary matrix ${\bf U}$ such that ${\bf U}^{*}{\bf AU}$ is an upper
triangular matrix with diagonal entries $h_{1} + k_{1}{\bf  i},\ldots, h_{n} + k_{n}{\bf  i}$ which are the standard eigenvalues of  ${\bf A}$.
\end{theorem}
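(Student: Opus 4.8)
The plan is to argue by induction on $n$, following the classical proof of Schur triangularization but replacing the ordinary Gram--Schmidt step by the Gram--Schmidt process in the right quaternion vector space $\mathcal{H}_{r}$ introduced in the preliminaries. The base case $n=1$ is immediate: applying the preceding theorem of \cite{br} to a $1\times1$ matrix $[a]$ yields an eigenvalue $\lambda=h_{1}+k_{1}{\bf i}$ with $k_{1}\geq 0$ and a nonzero scalar $v$ with $av=v\lambda$; normalizing $u=v/\|v\|$ produces a unit quaternion with $u^{-1}au=\bar{u}au=\lambda$, so ${\bf U}=[u]$ is the required $1\times1$ unitary matrix.

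For the inductive step, assume the statement holds in dimension $n-1$ and let ${\bf A}\in {\rm M}(n,{\mathbb H})$. The cited theorem of \cite{br} supplies a right eigenvalue $\lambda_{1}=h_{1}+k_{1}{\bf i}$ with $k_{1}\geq 0$ and a corresponding eigenvector which we normalize so that $\|{\bf v}_{1}\|_{r}=1$, i.e. ${\bf A}{\bf v}_{1}={\bf v}_{1}\lambda_{1}$. Running the Gram--Schmidt process of the preliminaries on any completion of ${\bf v}_{1}$ to a basis of ${\mathbb H}^{n\times 1}$ gives an orthonormal basis $\{{\bf v}_{1},{\bf u}_{2},\ldots,{\bf u}_{n}\}$ of $\mathcal{H}_{r}$; then ${\bf U}_{1}=[{\bf v}_{1}\;{\bf u}_{2}\;\cdots\;{\bf u}_{n}]$ has orthonormal columns, hence is unitary, and ${\bf U}_{1}^{*}{\bf v}_{1}={\bf e}_{1}$. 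Consequently the first column of ${\bf U}_{1}^{*}{\bf A}{\bf U}_{1}$ equals ${\bf U}_{1}^{*}{\bf A}{\bf v}_{1}={\bf U}_{1}^{*}{\bf v}_{1}\lambda_{1}={\bf e}_{1}\lambda_{1}$, so
\[
{\bf U}_{1}^{*}{\bf A}{\bf U}_{1}=
\begin{bmatrix}
\lambda_{1} & {\bf b}^{*}\\
{\bf 0} & {\bf A}_{1}
\end{bmatrix}
\]
for some ${\bf A}_{1}\in {\rm M}(n-1,{\mathbb H})$ and some row ${\bf b}^{*}$. Applying the induction hypothesis to ${\bf A}_{1}$, choose a unitary ${\bf V}\in {\rm M}(n-1,{\mathbb H})$ with ${\bf V}^{*}{\bf A}_{1}{\bf V}$ upper triangular with diagonal entries $h_{2}+k_{2}{\bf i},\ldots,h_{n}+k_{n}{\bf i}$ (each $k_{t}\geq 0$), put ${\bf U}_{2}={\rm diag}[1,{\bf V}]$, which is again unitary, and set ${\bf U}={\bf U}_{1}{\bf U}_{2}$. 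Being a product of unitary matrices, ${\bf U}$ is unitary, and
\[
{\bf U}^{*}{\bf A}{\bf U}={\bf U}_{2}^{*}\bigl({\bf U}_{1}^{*}{\bf A}{\bf U}_{1}\bigr){\bf U}_{2}=
\begin{bmatrix}
\lambda_{1} & {\bf b}^{*}{\bf V}\\
{\bf 0} & {\bf V}^{*}{\bf A}_{1}{\bf V}
\end{bmatrix}
\]
is upper triangular with diagonal $\lambda_{1},h_{2}+k_{2}{\bf i},\ldots,h_{n}+k_{n}{\bf i}$, each term already of the form $h+k{\bf i}$ with $k\geq 0$.

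It remains to identify this diagonal with the list of $n$ standard eigenvalues of ${\bf A}$, and this is the step I expect to need the most care. Two facts are required: that the multiset of standard eigenvalues is invariant under unitary similarity, and that the standard eigenvalues of an upper triangular quaternion matrix are its diagonal entries. Both become transparent after passing to the complex adjoint matrix $\chi_{(\cdot)}\in {\mathbb C}^{2n\times2n}$. Since $\chi$ is multiplicative with $\chi_{\bf I}={\bf I}$, from ${\bf U}^{*}{\bf U}={\bf I}$ we get $\chi_{{\bf U}^{*}}=\chi_{{\bf U}}^{-1}$, so $\chi_{{\bf A}}$ and $\chi_{{\bf U}^{*}{\bf A}{\bf U}}$ are similar over ${\mathbb C}$ and share their spectrum; and the $n$ standard eigenvalues of a quaternion matrix are precisely those complex eigenvalues of its adjoint with nonnegative imaginary part (the $2n$ eigenvalues of the adjoint occurring in conjugate pairs). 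For an upper triangular ${\bf T}\in{\rm M}(n,{\mathbb H})$, $\chi_{{\bf T}}$ is block upper triangular with $2\times2$ diagonal blocks whose eigenvalues are $t_{tt}$ and $\overline{t_{tt}}$, so its spectrum is $\{t_{tt},\overline{t_{tt}}:t=1,\dots,n\}$. Combining these observations shows that the diagonal of ${\bf U}^{*}{\bf A}{\bf U}$ lists exactly the $n$ standard eigenvalues of ${\bf U}^{*}{\bf A}{\bf U}$, hence of ${\bf A}$, which completes the induction.
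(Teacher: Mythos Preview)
The paper does not supply its own proof of this theorem; it is quoted from \cite{br} as a known result in the preliminaries, so there is nothing in the paper to compare your argument against line by line.

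That said, your proof is the standard Schur-triangularization argument and is correct. One small point to tighten: when you assert that $\chi_{\bf T}$ is block upper triangular with $2\times2$ diagonal blocks, this is literally true only under the interleaved indexing of $\mathbb{C}^{2n}$; under the more common convention $\chi_{\bf T}=\begin{pmatrix}{\bf T}_{1}&{\bf T}_{2}\\-\overline{{\bf T}_{2}}&\overline{{\bf T}_{1}}\end{pmatrix}$ the matrix is not block upper triangular, but a permutation similarity brings it to that form, so the eigenvalue conclusion is unaffected. A one-line remark to that effect would make the final paragraph airtight.
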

\begin{Corollary}\cite{zh}\label{th:diag_spec}
Let ${\rm {\bf A}} \in {\rm M}\left( {n,{\rm {\mathbb{H}}}}
\right)$ with the standard eigenvalues $h_{1} + k_{1}{\bf  i},\ldots, h_{n} + k_{n}{\bf i}$.
Then
$
\sigma_{r}=[h_{1} + k_{1}{\bf  i}]\cup\cdots\cup[h_{n} + k_{n}{\bf  i}].
$
\end{Corollary}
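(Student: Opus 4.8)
The plan is to reduce the assertion to the case of an upper triangular matrix by means of the triangularization theorem of \cite{br} recalled just above, and then to compute the right spectrum of a triangular matrix by induction on its order. Two elementary facts will be used throughout. First, the right spectrum is invariant under invertible similarity: if ${\bf B}={\bf S}^{-1}{\bf A}{\bf S}$ with ${\bf S}$ invertible, then ${\bf A}{\bf x}={\bf x}\lambda$ with ${\bf x}\neq{\bf 0}$ holds iff ${\bf B}({\bf S}^{-1}{\bf x})=({\bf S}^{-1}{\bf x})\lambda$ with ${\bf S}^{-1}{\bf x}\neq{\bf 0}$, so $\sigma_{r}({\bf A})=\sigma_{r}({\bf B})$. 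Second, $\sigma_{r}$ is a union of full equivalence classes: if ${\bf A}{\bf x}={\bf x}\lambda$ with ${\bf x}\neq{\bf 0}$ and $\|u\|=1$, then ${\bf A}({\bf x}u)=({\bf x}u)(u^{-1}\lambda u)$, whence $[\lambda]\subseteq\sigma_{r}({\bf A})$; equivalently, $x^{-1}\lambda x\in[\lambda]$ for every nonzero quaternion $x$ (rescale $x$ to a unit quaternion). Both were noted, in essence, earlier in the text.

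By the triangularization theorem of \cite{br} I would choose a unitary ${\bf U}$ with ${\bf T}:={\bf U}^{*}{\bf A}{\bf U}$ upper triangular and diagonal entries $t_{11}=h_{1}+k_{1}{\bf i},\ldots,t_{nn}=h_{n}+k_{n}{\bf i}$; by the first remark $\sigma_{r}({\bf A})=\sigma_{r}({\bf T})$, so it suffices to show that for any upper triangular ${\bf T}=(t_{ij})\in{\rm M}(n,{\mathbb{H}})$ one has $\sigma_{r}({\bf T})=[t_{11}]\cup\cdots\cup[t_{nn}]$. I would prove this by induction on $n$, the case $n=1$ being immediate. For $n>1$ write ${\bf T}=\left(\begin{smallmatrix}{\bf T}_{1}&{\bf b}\\ {\bf 0}&t_{nn}\end{smallmatrix}\right)$ with ${\bf T}_{1}$ upper triangular of order $n-1$, so $\sigma_{r}({\bf T}_{1})=[t_{11}]\cup\cdots\cup[t_{n-1,n-1}]$ by the inductive hypothesis. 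For the inclusion $\sigma_{r}({\bf T})\subseteq\bigcup_{j}[t_{jj}]$: given an eigenvector ${\bf x}=({\bf y}^{T},x_{n})^{T}$ for $\lambda$, the last coordinate equation $t_{nn}x_{n}=x_{n}\lambda$ forces either $x_{n}\neq 0$, in which case $\lambda=x_{n}^{-1}t_{nn}x_{n}\in[t_{nn}]$, or $x_{n}=0$, in which case ${\bf y}\neq{\bf 0}$, ${\bf T}_{1}{\bf y}={\bf y}\lambda$, so $\lambda\in\sigma_{r}({\bf T}_{1})$. For the reverse inclusion, embedding any eigenvector of ${\bf T}_{1}$ as $({\bf y}^{T},0)^{T}$ gives $\sigma_{r}({\bf T}_{1})\subseteq\sigma_{r}({\bf T})$, so it remains only to put $[t_{nn}]$ into $\sigma_{r}({\bf T})$. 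If $t_{nn}\notin\sigma_{r}({\bf T}_{1})$, the ${\mathbb{R}}$-linear endomorphism ${\bf y}\mapsto{\bf T}_{1}{\bf y}-{\bf y}\,t_{nn}$ of the finite-dimensional real space ${\mathbb{H}}^{n-1}$ has trivial kernel (a nonzero kernel vector would be an eigenvector of ${\bf T}_{1}$ with eigenvalue $t_{nn}$), hence is invertible; solving ${\bf T}_{1}{\bf y}-{\bf y}\,t_{nn}=-{\bf b}$ yields the eigenvector $({\bf y}^{T},1)^{T}$ of ${\bf T}$ for $t_{nn}$, and then $[t_{nn}]\subseteq\sigma_{r}({\bf T})$ by the second remark. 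If instead $t_{nn}\in\sigma_{r}({\bf T}_{1})$, then $[t_{nn}]=[t_{jj}]$ for some $j<n$, so $[t_{nn}]\subseteq\sigma_{r}({\bf T}_{1})\subseteq\sigma_{r}({\bf T})$. Either way the induction closes, and unwinding the similarity gives $\sigma_{r}({\bf A})=[h_{1}+k_{1}{\bf i}]\cup\cdots\cup[h_{n}+k_{n}{\bf i}]$.

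The hard part will be precisely the reverse inclusion for the lower diagonal entries: when $t_{nn}$ is conjugate to an earlier diagonal entry, the Sylvester-type operator ${\bf y}\mapsto{\bf T}_{1}{\bf y}-{\bf y}\,t_{nn}$ is singular, and one cannot directly exhibit an eigenvector of the shape $({\bf y}^{T},1)^{T}$; the resolution is the observation that in exactly that situation the class $[t_{nn}]$ has already been captured by the block ${\bf T}_{1}$. A minor point to check is that when $h_{t}+k_{t}{\bf i}$ happens to be real the class $[h_{t}+k_{t}{\bf i}]$ degenerates to a single real number, but every step above is uniform in that case.
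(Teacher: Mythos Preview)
Your argument is correct. The paper itself does not supply a proof of this corollary; it is quoted from \cite{zh} as an immediate consequence of Brenner's triangularization theorem stated just above it, so there is no in-paper proof to compare against. Your proof fills this gap cleanly: the reduction by unitary similarity to an upper triangular ${\bf T}$, followed by the induction on $n$ with the block splitting $\left(\begin{smallmatrix}{\bf T}_{1}&{\bf b}\\ {\bf 0}&t_{nn}\end{smallmatrix}\right)$, is the standard route, and each step is sound. The only place requiring care---the reverse inclusion for $[t_{nn}]$---is handled correctly by your case split: when $t_{nn}\notin\sigma_{r}({\bf T}_{1})$ the ${\mathbb{R}}$-linear Sylvester operator ${\bf y}\mapsto{\bf T}_{1}{\bf y}-{\bf y}\,t_{nn}$ on the finite-dimensional real space ${\mathbb{H}}^{n-1}$ is injective, hence bijective, so one can solve for ${\bf y}$ and produce the eigenvector $({\bf y}^{T},1)^{T}$; when $t_{nn}\in\sigma_{r}({\bf T}_{1})$ the class $[t_{nn}]$ is already accounted for by the inductive hypothesis. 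Your closing remarks about the degenerate real-eigenvalue case and about why the singular Sylvester operator is harmless are accurate and show you have identified the genuine subtlety.
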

\begin{Corollary}\cite{zh}\label{th:diag_norm} ${\rm {\bf A}} \in {\rm M}\left( {n,{\rm {\mathbb{H}}}}
\right)$ is  normal  if  and  only  if  there  exists  an  unitary  matrix
${\bf U}\in {\rm M}\left( {n,{\rm {\mathbb{H}}}}
\right)$  such  that
   \begin{equation}\label{eq:diag_norm}
   {\bf U}^{*}{\bf A}{\bf U}={\rm diag}\{\lambda_{1},\dots,\lambda_{n}\},
   \end{equation}
where $\lambda_{i}=h_{i}+k_{i}{\bf i}\in {\mathbb C}$ is standard eigenvalues for all $i=1,\ldots,n$. ${\rm {\bf A}}$ is  Hermitian if  and  only  if  $k_{i}=0$ and $\lambda_{i}=h_{i}\in {\mathbb R}$.
\end{Corollary}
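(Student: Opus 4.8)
The plan is to deduce both equivalences from the Schur\nobreakdash-type triangularization theorem of \cite{br} stated just above, combined with two elementary facts: conjugation by a unitary matrix preserves normality, and an upper triangular normal quaternion matrix is necessarily diagonal.

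The easy direction ($\Leftarrow$) is immediate. If ${\bf U}^{*}{\bf A}{\bf U} = {\bf D} = {\rm diag}\{\lambda_{1},\dots,\lambda_{n}\}$ with $\lambda_{i}\in{\mathbb C}$, then ${\bf A} = {\bf U}{\bf D}{\bf U}^{*}$, so ${\bf A}{\bf A}^{*} = {\bf U}{\bf D}{\bf D}^{*}{\bf U}^{*}$ and ${\bf A}^{*}{\bf A} = {\bf U}{\bf D}^{*}{\bf D}{\bf U}^{*}$. Since the complex diagonal entries of ${\bf D}$ commute with their conjugates, ${\bf D}{\bf D}^{*} = {\bf D}^{*}{\bf D} = {\rm diag}\{|\lambda_{1}|^{2},\dots,|\lambda_{n}|^{2}\}$, whence ${\bf A}{\bf A}^{*} = {\bf A}^{*}{\bf A}$, i.e. ${\bf A}$ is normal. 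For the Hermitian refinement in this direction, if in addition all $k_{i}=0$ then ${\bf D}$ is a real diagonal matrix, hence Hermitian, and ${\bf A} = {\bf U}{\bf D}{\bf U}^{*}$ is Hermitian.

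For the main direction ($\Rightarrow$), I would apply the triangularization theorem of \cite{br} to get a unitary ${\bf U}$ with ${\bf T} := {\bf U}^{*}{\bf A}{\bf U}$ upper triangular and diagonal entries equal to the standard eigenvalues $\lambda_{i} = h_{i} + k_{i}{\bf i}$. Because ${\bf U}^{*}{\bf U} = {\bf U}{\bf U}^{*} = {\bf I}$, normality transfers: ${\bf T}{\bf T}^{*} = {\bf U}^{*}{\bf A}{\bf A}^{*}{\bf U} = {\bf U}^{*}{\bf A}^{*}{\bf A}{\bf U} = {\bf T}^{*}{\bf T}$. Then I would show by induction on the row index $r$ that $t_{rj}=0$ for all $j>r$: comparing the $(r,r)$ diagonal entries of ${\bf T}{\bf T}^{*}$ and ${\bf T}^{*}{\bf T}$ yields $\sum_{j\ge r}|t_{rj}|^{2} = \sum_{j\le r}|t_{jr}|^{2}$; by upper triangularity and the rows already handled the right-hand side collapses to $|t_{rr}|^{2}$, while the left-hand side equals $|t_{rr}|^{2} + \sum_{j>r}|t_{rj}|^{2}$, forcing $t_{rj}=0$ for $j>r$. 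Hence ${\bf T}$ is diagonal, ${\bf T} = {\rm diag}\{\lambda_{1},\dots,\lambda_{n}\}$, and we take ${\bf D} = {\bf T}$. For the Hermitian case, if ${\bf A} = {\bf A}^{*}$ then ${\bf A}$ is in particular normal, so the above gives ${\bf U}^{*}{\bf A}{\bf U} = {\bf D}$; now ${\bf D}^{*} = {\bf U}^{*}{\bf A}^{*}{\bf U} = {\bf U}^{*}{\bf A}{\bf U} = {\bf D}$ forces $\overline{\lambda_{i}} = \lambda_{i}$, i.e. $k_{i}=0$ and $\lambda_{i} = h_{i}\in{\mathbb R}$.

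The only step requiring genuine care is the induction in the ``upper triangular normal $\Rightarrow$ diagonal'' argument, where one must be sure that the noncommutativity of ${\mathbb H}$ does not break the term\nobreakdash-by\nobreakdash-term comparison. It does not, since the objects being compared are the diagonal entries $({\bf T}{\bf T}^{*})_{rr} = \sum_{j} t_{rj}\overline{t_{rj}} = \sum_{j}|t_{rj}|^{2}$ and $({\bf T}^{*}{\bf T})_{rr} = \sum_{j}\overline{t_{jr}}t_{jr} = \sum_{j}|t_{jr}|^{2}$, which are sums of nonnegative real numbers irrespective of the order of multiplication; hence equality of the two sums together with $|t_{rr}|^{2}$ appearing on both sides forces every remaining summand on the left to vanish.
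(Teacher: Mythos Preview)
Your proof is correct. Note that the paper does not supply its own proof of this corollary; it is quoted from \cite{zh} without argument. So there is no ``paper's proof'' to compare against, only the cited result.

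Your approach---Schur triangularization via the theorem of \cite{br}, followed by the standard ``upper triangular normal implies diagonal'' argument---is exactly the expected one, and your handling of the quaternionic subtlety is right: the $(r,r)$ entries of ${\bf T}{\bf T}^{*}$ and ${\bf T}^{*}{\bf T}$ are sums of $t_{rj}\overline{t_{rj}}$ and $\overline{t_{jr}}t_{jr}$ respectively, and since $q\overline{q}=\overline{q}q=|q|^{2}\in{\mathbb R}$ for any $q\in{\mathbb H}$, these are genuinely sums of nonnegative reals and the cancellation goes through. Both directions of the Hermitian refinement are also correctly argued.
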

From Theorem \ref{th:diag_gen} and Corollaries \ref{th:diag_spec} and \ref{th:diag_norm} the following proposition evidently follows.
\begin{theorem}\label{th:sim_norm} An arbitrary matrix ${\rm {\bf A}} \in {\rm M}\left( {n,{\rm {\mathbb{H}}}}
\right)$ is diagonalizable if and only if it is similar to some normal matrix.
\end{theorem}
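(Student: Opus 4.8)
The plan is to establish both directions directly, using Theorem~\ref{th:diag_gen} to characterize diagonalizability as similarity to a diagonal matrix and Corollary~\ref{th:diag_norm} to characterize normality as unitary diagonalizability.

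For the forward direction, assume ${\bf A}$ is diagonalizable. By Theorem~\ref{th:diag_gen} there exist an invertible ${\bf P}$ and a diagonal matrix ${\bf D}={\rm diag}\left[\lambda_{1},\ldots,\lambda_{n}\right]$ with ${\bf A}={\bf P}{\bf D}{\bf P}^{-1}$. The one observation to be recorded is that every diagonal quaternion matrix is normal: since ${\bf D}{\bf D}^{*}={\rm diag}\left[\lambda_{1}\overline{\lambda_{1}},\ldots,\lambda_{n}\overline{\lambda_{n}}\right]$ and ${\bf D}^{*}{\bf D}={\rm diag}\left[\overline{\lambda_{1}}\lambda_{1},\ldots,\overline{\lambda_{n}}\lambda_{n}\right]$, and $q\overline{q}=\overline{q}q=\|q\|^{2}\in{\mathbb R}$ for every $q\in{\mathbb H}$, the two products agree, so ${\bf D}{\bf D}^{*}={\bf D}^{*}{\bf D}$. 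Thus ${\bf A}$ is similar, via ${\bf P}$, to the normal matrix ${\bf D}$. It is irrelevant whether the $\lambda_{i}$ are the complex standard eigenvalues of Corollary~\ref{th:diag_spec} or arbitrary quaternions, since normality of ${\bf D}$ holds for any diagonal entries.

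For the converse, assume ${\bf A}={\bf S}{\bf N}{\bf S}^{-1}$ for some invertible ${\bf S}$ and some normal ${\bf N}$. By Corollary~\ref{th:diag_norm} there is a unitary ${\bf U}$ with ${\bf U}^{*}{\bf N}{\bf U}={\bf D}$ diagonal, i.e.\ ${\bf N}={\bf U}{\bf D}{\bf U}^{*}$. Substituting gives ${\bf A}=({\bf S}{\bf U}){\bf D}({\bf S}{\bf U})^{-1}$, and since ${\bf S}{\bf U}$ is invertible (its inverse being ${\bf U}^{-1}{\bf S}^{-1}$), this presents ${\bf A}$ as similar to the diagonal matrix ${\bf D}$, hence diagonalizable in the sense of Theorem~\ref{th:diag_gen}.

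I expect no real obstacle here: the argument reduces to the elementary fact that a diagonal quaternion matrix is normal, together with the bookkeeping that composing the similarity ${\bf S}$ with the unitary ${\bf U}$ from Corollary~\ref{th:diag_norm} yields a single similarity carrying ${\bf A}$ to a diagonal matrix. The only points requiring care are that ``similar to a normal matrix'' must be read as allowing an arbitrary (not necessarily unitary) similarity, and that the diagonal factor produced by the unitary conjugation in Corollary~\ref{th:diag_norm} carries the complex standard eigenvalues — neither of which affects the chain of equivalences ``diagonalizable $\Leftrightarrow$ similar to a diagonal matrix $\Leftrightarrow$ similar to a normal matrix''.
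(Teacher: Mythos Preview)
Your proof is correct and follows essentially the same strategy as the paper: both directions reduce to composing the similarity from Theorem~\ref{th:diag_gen} with the unitary diagonalization of Corollary~\ref{th:diag_norm}. Your forward direction is in fact slightly more direct than the paper's: you observe that the diagonal matrix ${\bf D}$ is itself normal and stop there, whereas the paper introduces an auxiliary normal matrix ${\bf N}$ unitarily similar to ${\bf D}$ and then shows ${\bf A}$ is similar to ${\bf N}$ via ${\bf P}{\bf U}$ --- an extra step that is not needed once one notes that diagonal implies normal.
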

\begin{proof}($\Rightarrow$)
Let $ {\bf A} \in {\rm M}( n, {\mathbb{H}}
)$  is diagonalizable. Then by (\ref{eq:diag_gen}), there exists an invertible matrix ${\bf P}$ such that
 ${\bf A}={\bf P}{\bf D}{\bf P}^{-1}$, where ${\bf D}={\rm diag}\{\lambda_{1},\dots,\lambda_{n}\}$ and $\lambda_{i}=h_{i}+k_{i}{\bf i}\in {\mathbb C}$ is standard eigenvalues for all $i=1,\ldots,n$.
Let $ {\bf N} \in {\rm M}( n, {\mathbb{H}})$ be a normal matrix such that by (\ref{eq:diag_norm})
   ${\bf U}{\bf N}{\bf U}^{*}={\rm diag}\{\lambda_{1},\dots,\lambda_{n}\}$, where ${\bf U}$ is some unitary matrix. Then, we have
\[  {\bf A}={\bf P}{\bf U}{\bf N}{\bf U}^{*}{\bf P}^{-1} =\left({\bf P}{\bf U}\right){\bf N}\left({\bf P}{\bf U}\right)^{-1}.
  \]
 ($\Leftarrow$) Let ${\rm {\bf A}} $ is similar to some normal matrix ${\bf N} \in {\rm M}( n, {\mathbb{H}})$. It means  there exists an invertible matrix ${\bf T}$ such that
 ${\bf A}={\bf T}^{-1}{\bf N}{\bf T}$. Since ${\bf N}={\bf U}^{*}{\rm diag}\{\lambda_{1},\dots,\lambda_{n}\}{\bf U}$, then
 \[{\bf A}={\bf T}^{-1}{\bf U}^{*}{\rm diag}\{\lambda_{1},\dots,\lambda_{n}\}{\bf U}{\bf T}=({\bf U}{\bf T})^{-1}{\rm diag}\{\lambda_{1},\dots,\lambda_{n}\}({\bf U}{\bf T}). \]
\end{proof}
Right (\ref{eq:right_eigen}) and left (\ref{eq:left1_eigen}) eigenvalues are in general unrelated \cite{fa}, but it is not  for Hermitian matrices.
Suppose ${\rm {\bf A}} \in {\rm M}\left( {n, {\mathbb{H}}}\right)$
is Hermitian and $\lambda \in {\rm {\mathbb {R}}}$ is its right
eigenvalue, then ${\rm {\bf A}} \cdot {\rm {\bf x}} = {\rm {\bf
x}} \cdot \lambda = \lambda \cdot {\rm {\bf x}}$. This means that
all right eigenvalues of a Hermitian matrix are its left
eigenvalues as well. For real left eigenvalues, $\lambda \in {\rm
{\mathbb {R}}}$, the matrix $\lambda {\rm {\bf I}} - {\rm {\bf
A}}$ is Hermitian.
\begin{definition}\label{def:ch_pol}
If $t \in {\rm {\mathbb {R}}}$, then for a Hermitian matrix ${\rm
{\bf A}}$ the polynomial $p_{{\rm {\bf A}}}\left( {t} \right) =
\det \left( {t{\rm {\bf I}} - {\rm {\bf A}}} \right)$ is said to
be the characteristic polynomial of ${\rm {\bf A}}$.
\end{definition}
The roots of the characteristic polynomial of a Hermitian matrix
are its real left eigenvalues, which are its right eigenvalues as
well. We can prove the following theorem by analogy to the
commutative case (see, e.g. \cite{la}).

\begin{theorem}\label{theor:char_polin}
If ${\rm {\bf A}} \in {\rm M}\left( {n,{\rm {\mathbb{H}}}}
\right)$ is Hermitian, then $p_{{\rm {\bf A}}}\left( {t} \right) =
t^{n} - d_{1} t^{n - 1} + d_{2} t^{n - 2} - \ldots + \left( { - 1}
\right)^{n}d_{n}$, where $d_{1}={\rm tr}{\bf A} $, $d_{k} $ is the sum of principle minors
of ${\rm {\bf A}}$ of order $k$ for all $1 < k < n$, and $d_{n}=\det
{\rm {\bf A}}$.
\end{theorem}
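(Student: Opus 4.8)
This is the quaternionic Hermitian analogue of the classical formula for the coefficients of the characteristic polynomial, and the plan is to prove it by induction on $n$, imitating the commutative argument line by line. What legitimizes the imitation is that, by \cite{kyr1}, the Hermitian determinant enjoys the three formal properties that the real or complex proof uses: first, for $t\in{\mathbb{R}}$ and Hermitian ${\bf A}$ the matrix $t{\bf I}-{\bf A}$ is Hermitian, so $\det(t{\bf I}-{\bf A}):={\rm rdet}_i(t{\bf I}-{\bf A})$ is a well-defined real number (Theorem~\ref{theorem: determinant of hermitian matrix}), and every principal submatrix $(t{\bf I}-{\bf A})_{\beta}^{\beta}$ is Hermitian, so its principal minor is a real polynomial in $t$; second, $\det(t{\bf I}-{\bf A})$ expands along any row, say $\det(t{\bf I}-{\bf A})=\sum_{j}(t{\bf I}-{\bf A})_{nj}R_{nj}$, with the cofactors $R_{nj}$ the signed row determinants of the $(n-1)\times(n-1)$ submatrices of $t{\bf I}-{\bf A}$ described in Theorem~\ref{th:inver_her}; third, ${\rm rdet}_i$ is additive and left ${\mathbb{H}}$-homogeneous in its $i$th row, because in every summand of ${\rm rdet}_i{\bf A}$ exactly one factor, the leftmost, is an entry of the $i$th row.

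The base case $n=1$ is immediate, since $p_{\bf A}(t)=t-a_{11}=t-{\rm tr}{\bf A}$ with $a_{11}\in{\mathbb{R}}$. For the inductive step I would expand $\det(t{\bf I}-{\bf A})$ along its $n$th row. The diagonal cofactor $R_{nn}$ is the row determinant of $(t{\bf I}-{\bf A})^{nn}=t{\bf I}-{\bf A}^{nn}$, which is Hermitian, hence $R_{nn}=p_{{\bf A}^{nn}}(t)$ for the leading $(n-1)\times(n-1)$ principal submatrix ${\bf A}^{nn}$, and it is multiplied by $(t{\bf I}-{\bf A})_{nn}=t-a_{nn}$. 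For $j<n$ the submatrix underlying $R_{nj}$ is obtained from $t{\bf I}-{\bf A}$ by deleting row $n$ (which removes the occurrence of $t$ in position $(n,n)$) and by overwriting or deleting column $j$ (which removes the occurrence in position $(j,j)$), so at most $n-2$ of its entries still contain $t$, each linearly; hence $R_{nj}$, and therefore $(t{\bf I}-{\bf A})_{nj}R_{nj}=-a_{nj}R_{nj}$, is a polynomial of degree at most $n-2$. Setting $r(t):=\sum_{j<n}(-a_{nj})R_{nj}$, the row expansion gives $p_{\bf A}(t)=(t-a_{nn})\,p_{{\bf A}^{nn}}(t)+r(t)$ with $\deg r\le n-2$.

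It remains to read off coefficients. By induction $p_{{\bf A}^{nn}}(t)=\sum_{k=0}^{n-1}(-1)^{k}d_{k}({\bf A}^{nn})\,t^{\,n-1-k}$, where $d_{k}(\cdot)$ is the sum of the order-$k$ principal minors (and $d_{0}=1$). Multiplying by $t-a_{nn}$ gives leading coefficient $1$, coefficient of $t^{n-1}$ equal to $-(d_{1}({\bf A}^{nn})+a_{nn})=-{\rm tr}{\bf A}$ (and $r$ contributes nothing here), and contribution $(-1)^{k}\big(d_{k}({\bf A}^{nn})+a_{nn}d_{k-1}({\bf A}^{nn})\big)$ to the coefficient of $t^{n-k}$ for $k\ge2$. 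Splitting $d_{k}({\bf A})=d_{k}({\bf A}^{nn})+\sum_{n\in\beta,\,|\beta|=k}|{\bf A}_{\beta}^{\beta}|$ and expanding each Hermitian minor with $n\in\beta$ along its $n$-indexed row produces $a_{nn}d_{k-1}({\bf A}^{nn})$ together with off-diagonal cofactor terms involving the $a_{nj}$, $j<n$. A direct verification, identical to the commutative bookkeeping, shows that $r(t)$ supplies exactly these off-diagonal contributions, so the coefficient of $t^{n-k}$ in $p_{\bf A}(t)$ equals $(-1)^{k}d_{k}({\bf A})$ for every $k$; in particular $d_{1}={\rm tr}{\bf A}$ and the constant term is $(-1)^{n}d_{n}=(-1)^{n}\det{\bf A}$, as claimed.

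The main obstacle, I expect, is bookkeeping rather than anything conceptual: keeping the signs straight while matching $r(t)$ against the off-diagonal minors containing the index $n$, and quoting properties two and three above in exactly the form needed for the not-necessarily-Hermitian submatrices that occur in the cofactors. A slicker alternative worth keeping in mind is the spectral route: by Corollary~\ref{th:diag_norm} there is a unitary ${\bf U}$ with ${\bf U}^{*}{\bf A}{\bf U}={\rm diag}\{h_{1},\dots,h_{n}\}$, $h_{i}\in{\mathbb{R}}$, so that $t{\bf I}-{\bf A}$ is $\ast$-congruent via ${\bf U}$ to ${\rm diag}\{t-h_{1},\dots,t-h_{n}\}$; provided one first proves that the Hermitian determinant is invariant under $\ast$-congruence by a unitary matrix, this yields $p_{\bf A}(t)=\prod_{i=1}^{n}(t-h_{i})$ immediately and reduces the theorem to the classical identity $d_{k}({\bf A})=e_{k}(h_{1},\dots,h_{n})$. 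Since neither that invariance nor that identity is available here in the stated form, I would present the inductive argument as the self-contained proof.
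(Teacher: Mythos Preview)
The paper does not actually prove this theorem: immediately before the statement it says only that ``We can prove the following theorem by analogy to the commutative case (see, e.g.\ \cite{la}),'' and no argument is given. Your inductive row-expansion argument is precisely that analogy spelled out, using the Hermitian determinant's reality (Theorem~\ref{theorem: determinant of hermitian matrix}), the cofactor expansion from Theorem~\ref{th:inver_her}, and the left-linearity of ${\rm rdet}_i$ in its $i$th row --- so your approach is exactly what the paper intends, only with the details supplied. The one place to be careful, which you already flag, is that the off-diagonal cofactors $R_{nj}$ are row determinants of non-Hermitian submatrices and hence individually quaternion-valued; the matching of $r(t)$ against the off-diagonal pieces of the Hermitian minors $|{\bf A}_\beta^\beta|$ with $n\in\beta$ therefore has to be done at the level of the full sum (which is real because $p_{\bf A}(t)$ is), not term by term.
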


\subsection{An algorithm for obtaining eigenvalues of normal quaternion matrices}
Using Theorem \ref{theor:char_polin} we can obtain   eigenvalues not only  of   Hermitian matrices but  of normal matrices as well. Moreover, if it is known a normal matrix which is similar to some quaternionic matrix  ${\bf A}$, then we also can find its   eigenvalues.
Let us derive eigenvalues and eigenvectors of a quaternion normal matrix ${\rm {\bf N}} \in {\rm M}\left( {n, {\mathbb{H}}}\right)$. Then the following algorithm can be considered.

Step 1: Find eigenvalues of the corresponding Hermitian matrix ${\bf N}^{*}{\bf N}$ by solving  roots of its characteristic polynomial,
$\det \left( {{\bf N}^{*}{\bf N}} - \lambda{\rm {\bf I}} \right)=0$.  Let the eigenvalues of ${\bf N}^{*}{\bf N}$ be $\lambda_{1}, \ldots, \lambda_{n}$, where $\lambda_{i}\in {\mathbb{R}}$ for all $i=1,\ldots,n$. The associated
eigenvectors are computed as the solutions to the equation $\left( {{\bf N}^{*}{\bf N}} - \lambda{\rm {\bf I}} \right){\bf v} = 0$.

Step 2: Conduct the unitary diagonalization of  ${\bf N}^{*}{\bf N}$. We determine all its own subspaces and choose  orthonormal basis in each of them for example by the Gram-Schmidt orthogonalization process. We  obtain orthonormal system of vectors ${\bf u}_{1}, \ldots,{\bf u}_{n}$. Since the matrix is diagonalizable, the union of all these bases is a basis of the whole space.
Construct the unitary matrix ${\bf U}$ which columns are ${\bf u}_{1}, \ldots,{\bf u}_{n}$.

Step 4: Since the unitary matrix ${\bf U}$ is applicable for diagonalization of ${\bf N}$ as well, we can find a diagonal matrix ${\bf D}={\bf U}^{*} {\bf N}{\bf U}$, where ${\bf D}={\rm diag}\{\mu_{1},\dots,\mu_{n}\}$, $\mu_{i}=h_{i}+k_{i}{\bf i}\in {\mathbb C}$ is standard eigenvalues for all $i=1,\ldots,n$ such that $\lambda_{i}=\overline{\mu}_{i}\mu_{i}$.

 Step 5: Let for  some quaternionic matrix  ${\bf A}$    it  be given an invertible matrix  ${\bf T}$ such that ${\bf A}={\bf T}{\bf N}{\bf T}^{-1}$. Then by Theorem \ref{th:sim_norm} $\mu_{i}$ for all $i=1,\ldots,n$ are also eigenvalues of  ${\bf A}$, and by Theorem \ref{th:diag_gen}, columns of  ${\bf T}{\bf U}$ are its corresponding eigenvectors.

We illustrate this algorithm  by the following example.

\begin{ex}\label{ex1}
Consider the normal matrix ${\bf N}=\begin{bmatrix}2 & 0 & i+j\\
 0 & i & 0\\
                                                     i -j & 0&  2
\end{bmatrix}.$ Its corresponding Hermitian matrix is
\[{\bf M}={\bf N}^{*}{\bf N}=\begin{bmatrix}6 & 0 & 4j\\
 0 & 1 & 0\\
                                                      -4j & 0&  6
\end{bmatrix}.\]
Find the eigenvalues of ${\bf M}$ which are the roots of the characteristic
polynomial
\[
p(\lambda)=\det\begin{bmatrix}\lambda-6 & 0 & 4j\\
 0 & \lambda-1 & 0\\
                                                      -4j & 0&  \lambda-6
\end{bmatrix}=\lambda^{3}-13\lambda^{2}+32\lambda-20\,\,\Rightarrow\,\,\begin{cases}
                                                           \lambda_{1}=10, \\
                                                           \lambda_{2}=1,\\
                                                           \lambda_{3}=2.
                                                         \end{cases}
\]
By computing the associated eigenvectors and after their orthonormalization, we obtain the unitary matrix ${\bf U}$ whose columns are this eigenvectors.
\[{\bf U}=\begin{bmatrix}0.5 -0.5j & 0 & 0.5 +0.5j\\
 0 & 1 & 0\\
                                                      0.5 +0.5j & 0&  0.5 -0.5j
\end{bmatrix}.\] Finally, we have
\begin{equation}\label{eq:eigenval}{\bf D}={\bf U}^{*}{\bf N}{\bf U}=\begin{bmatrix}1+i & 0 & 0\\
 0 & i & 0\\
                                                      0 & 0&  3+i
\end{bmatrix}\,\,\Rightarrow\,\,\begin{cases}
                                                           \mu_{1}=1+i, \\
                                                           \mu_{2}=i,\\
                                                           \mu_{3}=3+i.
                                                         \end{cases}\end{equation}
Moreover, consider the matrix
\[{\bf A}=\begin{bmatrix}1- 2.5i -0.5j+k & 4+3j+2.5k & 2- 2i -j-2.5k\\
 1.5-i -j-0.5k & 2+1.5i -3j+3k & 2+2.5i+j-k\\
                                                      0.5-i +j-0.5k & 3-i -0.5j&  1+i -1.5j-2k
\end{bmatrix}.\]
There exists the matrix
\[{\bf T}=\begin{bmatrix}-k & j & 2\\
 i & k & i\\
                                                      -j & 1&  i
\end{bmatrix}\] such that its the inverse is
\[{\bf T}^{-1}=\begin{bmatrix}-0.5+0.5k & -0.5i+j &-0.5i\\
 0.5i-0.5j & -1.5 & 0.5+k\\
                                                      0 & -0.5i+0.5j&  -0.5i-0.5j
\end{bmatrix},\]
and ${\bf A}={\bf T}{\bf N}{\bf T}^{-1}$. Then by Theorem \ref{th:sim_norm} the eigenvalues of  ${\bf A}$ are (\ref{eq:eigenval}), and its corresponding eigenvectors are the columns of the matrix
\[{\bf T}{\bf U}=\begin{bmatrix}1-0.5i+j-0.5k & j & 1+0.5i-j-0.5k\\
 i & k & i\\
                                                     -0.5+0.5i-0.5j+0.5k & 1&  -0.5+0.5i-0.5j-0.5k
\end{bmatrix}.\]
\end{ex}

\section{Systems of quaternion linear differential  equations}
\subsection{Definitions}
Consider a matrix valued function ${\bf
A}(t) = \left({ a}_{ij}(t)\right)\in{\rm {\mathbb{H}}}^{n \times n}\otimes {\mathbb{R}}$, where ${ a}_{ij}(t)$ are quaternion-valued functions with the real variable $t$ for all $i,j={1,\ldots,n}$.
 Then
 \begin{equation*}
\frac{{\rm d}{\bf A}(t)}{{\rm d}t}=\left(\frac{{\rm d}{ a}_{ij}(t)}{{\rm d}t}\right)_{n\times n},\;\;\;\;
\int_a^b {\bf
A}(t){{\rm d}t}= \left(\int_a^b { a}_{ij}(t){{\rm d}t}\right)_{n\times n}.
\end{equation*}
Over the quaternion skew field, we can consider the following systems of  linear differential  equations.
 \begin{definition}An $n \times n$ first order right  linear quaternion differential system is the equation
 \begin{equation}\label{eq:right_sys_dif}
 {\bf x}'= {\bf A}(t){\bf x}+{\bf b}(t),
\end{equation}
where ${\bf
A}(t) \in{\rm {\mathbb{H}}}^{n \times n}\otimes {\mathbb{R}}$ is the  coefficient matrix, ${\bf b}(t)=\begin{bmatrix}
  b_{1}(t) \\
   \vdots\\
  b_{n}(t)
\end{bmatrix}\in{\mathbb{H}}^{n \times 1}\otimes {\mathbb{R}}$ is the given column-vector,  ${\bf x}(t)=\begin{bmatrix}
  x_{1}(t) \\
   \vdots\\
  x_{n}(t)
\end{bmatrix}$ is the unknown column-vector.
An $n \times n$ first order left linear quaternion differential system is the equation
 \begin{equation}\label{eq:left_sys_dif}
 {\bf x}'= {\bf x}{\bf A}(t)+{\bf b}(t),
\end{equation}
where ${\bf b}(t)=\left(
  b_{1}(t)
   \cdots
  b_{n}(t)
\right)\in{\mathbb{H}}^{1 \times n}\otimes {\mathbb{R}}$ is the given row-vector,  ${\bf x}(t)=\left(
  x_{1}(t)
   \cdots
  x_{n}(t)
\right)$ is the unknown row-vector.

The systems (\ref{eq:right_sys_dif}) and (\ref{eq:left_sys_dif}) are called nonhomogeneous when there exists $t \in {\mathbb{R}}$ such that ${\bf b}(t) \neq 0$, and homogeneous  when the source vector ${\bf b} \equiv 0$, i.e., respectively,
 \begin{gather}\label{eq:right_sys_dif_hom}{\bf x}'= {\bf A}(t){\bf x},\\\label{eq:left_sys_dif_hom}
{\bf x}'= {\bf x}{\bf A}(t).
\end{gather}
\end{definition}
 \begin{remark}
By the definition of the matrix-vector product, Eq. (\ref{eq:right_sys_dif}) can be written as
\[\left\{\begin{aligned}x^{'}_1 & =a_{11}(t)x_{1}&+&\cdots &+& a_{1n}(t)x_{n}&+& b_{1}(t), \\
\vdots\\
x^{'}_n & =a_{n1}(t)x_{1}&+&\cdots &+& a_{nn}(t)x_{n}&+& b_{n}(t), \end{aligned}\right.
\]
and Eq. (\ref{eq:left_sys_dif}) can be written as
\[\left\{\begin{aligned}x^{'}_1 & =&x_{1}a_{11}(t)&+&\cdots &+& x_{n}a_{n1}(t)&+& b_{1}(t), \\
\vdots\\
x^{'}_n & =&x_{1}a_{1n}(t)&+&\cdots &+& x_{n}a_{nn}(t) &+& b_{n}(t). \end{aligned}\right.
\]
\end{remark}
\begin{definition}
Solutions of the linear differential systems (\ref{eq:right_sys_dif}) and (\ref{eq:left_sys_dif}) are, respectively, column-vector   and row-vector valued functions  {\bf x}(t) that satisfy every differential equation in the systems.
\end{definition}
\begin{definition}  \textbf{Initial Value Problems} for right and left quaternion linear differential systems are, respectively,
the following: Given an  matrix valued function ${\bf
A}(t) = \left({\bf a}_{ij}(t)\right)\in{\rm {\mathbb{H}}}^{n \times n}\otimes {\mathbb{R}}$, and a quaternion vector  valued function ${\bf b}(t)$,
a real constant $t_0$, and a vector ${\bf x_{0}}$, find  a quaternion  vector valued function ${\bf x}(t)$ that is a solution of
\begin{equation}\label{eq:right_sys_dif_cosh}
{\bf x}'= {\bf A}(t){\bf x}+{\bf b}(t),\,\,\,{\bf x}(t_{0})={\bf x}_{0},
\end{equation}
or
\begin{equation}\label{eq:left_sys_dif_cosh}
{\bf x}'= {\bf x}{\bf A}(t)+{\bf b}(t),\,\,\,{\bf x}(t_{0})={\bf x}_{0}.
\end{equation}
\end{definition}Similarly to real linear
differential equations, we can proved the following theorem  about existence and uniqueness of solutions to the initial value problems.
 \begin{theorem}\label{th:uniq_init_sol}
 If the functions ${\bf
A}(t)$ and ${\bf
b}(t)$ are continuous on
an open interval $I \in{\mathbb{H}}$, and if ${\bf x}_{0}$ is any constant vector (column or row, respectively) and $t_{0}$ is any constant in $I$, then
there exist only one function ${\bf x}(t)$, defined an interval $\tilde{I}\in I$ with $t_{0} \in \tilde{I}$, that is a solution of the initial
value problems (\ref{eq:right_sys_dif_cosh}) or (\ref{eq:left_sys_dif_cosh}), respectively.
\end{theorem}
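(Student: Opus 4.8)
The plan is to reduce each initial value problem to an equivalent integral equation and then to run a Picard iteration in the Banach space $C\bigl(\tilde I,\mathbb{H}^{n\times 1}\bigr)$ (respectively $C\bigl(\tilde I,\mathbb{H}^{1\times n}\bigr)$), using the completeness of $\mathbb{H}^{n\times1}$ along every fundamental quaternion unit noted in the Remark above together with the induced matrix norm $\|\cdot\|$. I treat the right system (\ref{eq:right_sys_dif_cosh}) in detail; the left system (\ref{eq:left_sys_dif_cosh}) is handled the same way using $\|\cdot\|_l$, or by transposition, since ${\bf x}{\bf A}=({\bf A}^{T}{\bf x}^{T})^{T}$.

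First I would note that a $C^1$ column-vector function ${\bf x}$ solves (\ref{eq:right_sys_dif_cosh}) on a subinterval $\tilde I\ni t_0$ if and only if it is a continuous solution of the integral equation
\[
{\bf x}(t)={\bf x}_0+\int_{t_0}^{t}\bigl({\bf A}(s){\bf x}(s)+{\bf b}(s)\bigr)\,{\rm d}s ,
\]
which is immediate from the fundamental theorem of calculus applied to the four real coordinate functions of each component of ${\bf x}$. Next, fix a closed bounded interval $[t_0-a,t_0+a]\subset I$; by continuity of ${\bf A}$ and ${\bf b}$ there are finite constants $M=\max\|{\bf A}(t)\|$ and $K=\max\|{\bf b}(t)\|$ over this interval. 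On $C\bigl([t_0-\delta,t_0+\delta],\mathbb{H}^{n\times1}\bigr)$ with the supremum norm $\|\cdot\|_\infty$ define the operator $(T{\bf x})(t)={\bf x}_0+\int_{t_0}^{t}({\bf A}(s){\bf x}(s)+{\bf b}(s))\,{\rm d}s$. Using the triangle inequality for the coordinatewise integral together with the compatibility inequality $\|{\bf A}(s){\bf x}(s)\|\le\|{\bf A}(s)\|\,\|{\bf x}(s)\|\le M\|{\bf x}(s)\|$ of the induced norm, one checks that for $\delta\le a$ the operator $T$ maps a suitable closed ball around the constant function ${\bf x}_0$ into itself and satisfies $\|T{\bf x}-T{\bf y}\|_\infty\le M\delta\,\|{\bf x}-{\bf y}\|_\infty$.

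Choosing $\delta<\min\{a,1/M\}$ (with $\delta$ arbitrary if $M=0$) makes $T$ a contraction on the complete metric space $C\bigl([t_0-\delta,t_0+\delta],\mathbb{H}^{n\times1}\bigr)$, so the Banach fixed point theorem yields a unique fixed point on $\tilde I=[t_0-\delta,t_0+\delta]$, which is exactly the unique solution of (\ref{eq:right_sys_dif_cosh}) there. Uniqueness on any interval of $I$ on which two solutions both exist then follows by the standard connectedness argument: the set on which two such solutions coincide contains $t_0$, is relatively closed by continuity, and is relatively open by the local uniqueness just established, hence is the whole common interval. An alternative, entirely equivalent route is to realify the system: writing ${\bf x}={\bf x}^{(0)}+{\bf x}^{(1)}{\bf i}+{\bf x}^{(2)}{\bf j}+{\bf x}^{(3)}{\bf k}$ and expanding ${\bf A}(t){\bf x}$ converts (\ref{eq:right_sys_dif_cosh}) into a linear system of $4n$ real ordinary differential equations with continuous coefficients, to which the classical Picard--Lindel\"of theorem applies directly.

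The only point requiring care is the bookkeeping of non-commutativity in the product ${\bf A}(t){\bf x}(t)$: throughout the iteration and in every norm estimate the coefficient matrix must stay strictly on the left of the unknown column (strictly on the left of the row vector in the left system). Because the system is linear this is not a genuine obstruction — the single inequality $\|{\bf A}{\bf x}\|\le\|{\bf A}\|\,\|{\bf x}\|$, valid by the very definition of the induced norm, is all that is needed, and the contraction constant $M\delta$ involves nothing quaternion-specific — so the classical existence and uniqueness estimates transfer without change.
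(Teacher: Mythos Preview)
Your argument is correct. The paper itself does not supply a proof of this theorem: it merely introduces it with the sentence ``Similarly to real linear differential equations, we can proved the following theorem\ldots'' and then moves on. Your Picard iteration via the contraction mapping principle, together with the alternative realification to a $4n$-dimensional real linear system, is precisely the standard argument the paper is alluding to, and all the estimates you use (in particular $\|{\bf A}{\bf x}\|\le\|{\bf A}\|\,\|{\bf x}\|$ for the induced norm) are available from the paper's Section~2.3.
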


 \subsection{General solutions of homogenous systems.}

\begin{definition}
A set of quaternion column-vector functions $\{{\bf x}_{1}(t),\ldots, {\bf x}_{n}(t)\}$ is called right linearly
dependent on an interval $I \in {\mathbb R}$ if for all $t \in I$ there exist constant quaternions $q_{1},\ldots, q_{n}$, ($q_{i} \in {\mathbb H}$ for all $i=1,\ldots,n$), not all of
them zero, such that it holds,
\begin{equation*}
{\bf x}_{1}(t)q_{1}+\cdots+{\bf x}_{n}(t)q_{n}={\bf 0}.
\end{equation*}
Similarly, a set of quaternion row-vector  functions $\{{\bf x}_{1}(t),\ldots, {\bf x}_{n}(t)\}$ is called left linearly
dependent on an interval $I \in {\mathbb R}$ if under the same conditions,
\begin{equation*}
q_{1}{\bf x}_{1}(t)+\cdots+q_{n}{\bf x}_{n}(t)={\bf 0}.
\end{equation*}
These sets are called right (left) linearly independent on $I$ if they are not right (left)
linearly dependent.
\end{definition}
 \begin{theorem}\label{th:riht_lin_comb_sol} If the column-vector valued functions ${\bf x}_{1}$, ${\bf x}_{2}$ are solutions of the homogenies system  (\ref{eq:right_sys_dif_hom}), i.e.
 ${\bf x}^{'}_{1} = {\bf A}(t) {\bf x}_{1}$ and ${\bf x}'_{2} = {\bf A}(t) {\bf x}_{2}$, then any  right linear combination ${\bf x} = {\bf x}_{1}a + {\bf x}_{2}b$,
for all $a, b \in {\mathbb H}$ is also a solution of (\ref{eq:right_sys_dif_hom}).
 \end{theorem}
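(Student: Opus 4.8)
The plan is to verify directly that the candidate function ${\bf x}(t) = {\bf x}_1(t)a + {\bf x}_2(t)b$ satisfies ${\bf x}' = {\bf A}(t){\bf x}$. First I would differentiate ${\bf x}$ componentwise. Since $a$ and $b$ are \emph{constant} quaternions, their derivatives vanish, and the product rule for quaternion-valued functions gives $({\bf x}_1 a)'(t) = {\bf x}_1'(t)a$ and $({\bf x}_2 b)'(t) = {\bf x}_2'(t)b$; hence ${\bf x}'(t) = {\bf x}_1'(t)a + {\bf x}_2'(t)b$.

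Next I would substitute the hypotheses ${\bf x}_1'(t) = {\bf A}(t){\bf x}_1(t)$ and ${\bf x}_2'(t) = {\bf A}(t){\bf x}_2(t)$, obtaining ${\bf x}'(t) = \bigl({\bf A}(t){\bf x}_1(t)\bigr)a + \bigl({\bf A}(t){\bf x}_2(t)\bigr)b$. Using the associativity of the quaternion matrix--vector--scalar product, $\bigl({\bf A}(t){\bf x}_i(t)\bigr)c = {\bf A}(t)\bigl({\bf x}_i(t)c\bigr)$ for $c \in \mathbb{H}$, together with distributivity of the matrix action over vector addition, this rearranges to ${\bf x}'(t) = {\bf A}(t)\bigl({\bf x}_1(t)a + {\bf x}_2(t)b\bigr) = {\bf A}(t){\bf x}(t)$, which is exactly (\ref{eq:right_sys_dif_hom}).

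There is no genuine obstacle here; the proof is a two-line computation. The only point requiring care -- and the reason the statement is phrased for \emph{right} linear combinations -- is the placement of the scalars $a$ and $b$ on the right of the vectors, so that they can be pulled through the left action of ${\bf A}(t)$. A left combination $a{\bf x}_1 + b{\bf x}_2$ would fail in general, since ${\bf A}(t)(a{\bf x}_1(t)) \ne a\bigl({\bf A}(t){\bf x}_1(t)\bigr)$ over $\mathbb{H}$ by noncommutativity; the analogous statement for the left system (\ref{eq:left_sys_dif_hom}) correspondingly requires left combinations $a{\bf x}_1 + b{\bf x}_2$. So the ``hard part'' is merely being mindful of quaternion noncommutativity in the bookkeeping.
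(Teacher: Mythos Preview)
Your proof is correct and follows essentially the same approach as the paper: differentiate the right linear combination, substitute the hypotheses, and use the right-linearity of the matrix--vector product over $\mathbb{H}$ to factor out ${\bf A}(t)$. Your additional remark on why right (rather than left) scalars are required is a helpful clarification that the paper leaves implicit.
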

 \begin{proof}Indeed, since the derivative of a vector valued
function is a linear operation, we get
\begin{equation*}
{\bf x}^{'} = \left({\bf x}_{1}a + {\bf x}_{2}b\right)^{'}={\bf x}^{'}_{1}a + {\bf x}^{'}_{2}b.
\end{equation*}
Replacing the differential equation on the right-hand side above,
\begin{equation*}
{\bf x}^{'} ={\bf A}(t) {\bf x}_{1}a + {\bf A}(t) {\bf x}_{2}b.
\end{equation*}
Since the matrix-vector product is a linear operation, then ${\bf A}{\bf x}_{1}a + {\bf A}{\bf x}_{2}b={\bf A}({\bf x}_{1}a+ {\bf x}_{2}b)$. Hence,
\begin{equation*}
{\bf x}^{'} ={\bf A}({\bf x}_{1}a+ {\bf x}_{2}b)={\bf A}{\bf x}.
\end{equation*}
This establishes the theorem.
\end{proof}
The following theorem can be proved similarly.
 \begin{theorem}If the row-vector valued functions ${\bf x}_{1}$, ${\bf x}_{2}$ are solutions of the homogenies system  (\ref{eq:left_sys_dif_hom}), i.e.
 ${\bf x}^{'}_{1} =  {\bf x}_{1}{\bf A}(t)$ and ${\bf x}^{'}_{2} =  {\bf x}_{1}{\bf A}(t)$, then any left linear combination ${\bf x} = a{\bf x}_{1} + b{\bf x}_{2}$,
for all $a, b \in {\mathbb H}$ is also solution of (\ref{eq:left_sys_dif_hom}).
 \end{theorem}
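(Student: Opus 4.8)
The plan is to mirror the argument used for Theorem~\ref{th:riht_lin_comb_sol}, interchanging the roles of left and right scalar multiplication. Since ${\bf x} = a{\bf x}_{1} + b{\bf x}_{2}$ with $a, b \in {\mathbb H}$ constant, and differentiation of a quaternion vector valued function acts componentwise and is additive, I would first observe that ${\bf x}' = (a{\bf x}_{1} + b{\bf x}_{2})' = (a{\bf x}_{1})' + (b{\bf x}_{2})'$. Next, because $a$ and $b$ are constant quaternions, the product rule for quaternion-valued functions (the first Proposition of Subsection~2.1, applied componentwise with the constant factor on the left) gives $(a{\bf x}_{i})' = a{\bf x}_{i}'$ for $i = 1, 2$, the term involving the derivative of the constant vanishing. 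Hence ${\bf x}' = a{\bf x}_{1}' + b{\bf x}_{2}'$.

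Then I would substitute the two differential equations ${\bf x}_{1}' = {\bf x}_{1}{\bf A}(t)$ and ${\bf x}_{2}' = {\bf x}_{2}{\bf A}(t)$ to obtain ${\bf x}' = a\bigl({\bf x}_{1}{\bf A}(t)\bigr) + b\bigl({\bf x}_{2}{\bf A}(t)\bigr)$. Finally, using associativity of quaternion multiplication entrywise --- equivalently, that the row-vector--matrix product ${\bf y} \mapsto {\bf y}{\bf A}(t)$ is a left ${\mathbb H}$-linear operation on the left ${\mathbb H}$-space of row vectors --- I would write $a\bigl({\bf x}_{1}{\bf A}\bigr) + b\bigl({\bf x}_{2}{\bf A}\bigr) = (a{\bf x}_{1}){\bf A} + (b{\bf x}_{2}){\bf A} = (a{\bf x}_{1} + b{\bf x}_{2}){\bf A} = {\bf x}{\bf A}(t)$, so that ${\bf x}' = {\bf x}{\bf A}(t)$, which is exactly~(\ref{eq:left_sys_dif_hom}). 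This establishes the theorem.

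There is no genuine obstacle here; the only point requiring care --- and the reason the statement uses scalars on the \emph{left} --- is the noncommutativity of ${\mathbb H}$. One must keep the constants $a, b$ on the left throughout: the step $(a{\bf x}_{1}){\bf A} = a({\bf x}_{1}{\bf A})$ is valid by associativity, whereas a right linear combination ${\bf x}_{1}a + {\bf x}_{2}b$ would in general fail to solve the left system, since $({\bf x}_{1}a){\bf A} \neq ({\bf x}_{1}{\bf A})a$ in general. This is precisely the mirror image of why Theorem~\ref{th:riht_lin_comb_sol} must use right linear combinations for the right system ${\bf x}' = {\bf A}(t){\bf x}$.
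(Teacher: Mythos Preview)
Your proof is correct and follows exactly the approach the paper intends: the paper does not write out a separate proof for this theorem but simply states that it ``can be proved similarly'' to Theorem~\ref{th:riht_lin_comb_sol}, which is precisely the mirror argument you give. Your explicit remark on why the scalars must stay on the left (associativity versus noncommutativity) is a welcome clarification that the paper leaves implicit.
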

 We have the following theorems about right an left homogenies systems.
  \begin{theorem}\label{th:right_sol}
 If $\{{\bf x}_{1}, \cdots, {\bf x}_{n}\}$ is a right linearly independent set of
solutions of (\ref{eq:right_sys_dif_hom}), where ${\bf A}$ is a continuous matrix valued function, then
there exist constant quaternions $q_{1},\cdots, q_{n}$, ($q_{i} \in {\mathbb H}$ for all $i=1,\ldots,n$) such that every solution ${\bf x}$ of (\ref{eq:right_sys_dif_hom}) can be written as the right linear combination
\begin{equation}
{\bf x}(t)={\bf x}_{1}(t)q_{1}+\cdots+{\bf x}_{n}(t)q_{n}.
\end{equation}
 \end{theorem}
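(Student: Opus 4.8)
The plan is to imitate the classical fundamental‑matrix argument, with quaternionic right linear combinations replacing the usual scalar ones. First I would assemble the $n$ given solutions into the matrix‑valued function ${\bf X}(t) = \left[{\bf x}_1(t),\ldots,{\bf x}_n(t)\right]\in{\mathbb{H}}^{n\times n}\otimes{\mathbb{R}}$; since each column satisfies ${\bf x}_i' = {\bf A}(t){\bf x}_i$, column‑by‑column differentiation gives the matrix equation ${\bf X}'(t) = {\bf A}(t){\bf X}(t)$. An easy induction built on Theorem \ref{th:riht_lin_comb_sol} then shows that for every constant column ${\bf q} = \left(q_1,\ldots,q_n\right)^T\in{\mathbb{H}}^{n\times 1}$ the function ${\bf X}(t){\bf q} = {\bf x}_1(t)q_1 + \cdots + {\bf x}_n(t)q_n$ is again a solution of (\ref{eq:right_sys_dif_hom}); note that the matrix–vector product does keep each quaternion scalar $q_i$ on the right of ${\bf x}_i(t)$, which is exactly the form required.

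The key step is to show that ${\bf X}(t)$ is invertible at each point of the interval $I$. Fix $t_0\in I$ and suppose, for contradiction, that the columns ${\bf x}_1(t_0),\ldots,{\bf x}_n(t_0)$ are right linearly dependent, so there are constants $c_1,\ldots,c_n\in{\mathbb{H}}$, not all zero, with $\sum_{i=1}^n {\bf x}_i(t_0)c_i = {\bf 0}$. Put ${\bf y}(t) = \sum_{i=1}^n {\bf x}_i(t)c_i$. By the previous paragraph ${\bf y}$ solves (\ref{eq:right_sys_dif_hom}) and satisfies ${\bf y}(t_0) = {\bf 0}$; the zero function is also such a solution, so the uniqueness part of Theorem \ref{th:uniq_init_sol} (applicable because ${\bf A}$ is continuous) forces ${\bf y}(t)\equiv{\bf 0}$ on $I$. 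That contradicts the right linear independence of $\{{\bf x}_1,\ldots,{\bf x}_n\}$ as a set of functions. Hence the columns of ${\bf X}(t_0)$ are right linearly independent, and by Theorem \ref{theorem:inver_equiv} the matrix ${\bf X}(t_0)$ belongs to $GL(n,{\mathbb{H}})$; since $t_0$ was arbitrary, ${\bf X}(t)$ is invertible for every $t\in I$.

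Finally, let ${\bf x}(t)$ be an arbitrary solution of (\ref{eq:right_sys_dif_hom}). Define the constant column ${\bf q} := {\bf X}(t_0)^{-1}{\bf x}(t_0) = \left(q_1,\ldots,q_n\right)^T$, which is well defined by the previous step, and set ${\bf z}(t) := {\bf X}(t){\bf q} = {\bf x}_1(t)q_1 + \cdots + {\bf x}_n(t)q_n$. Then ${\bf z}$ is a solution of (\ref{eq:right_sys_dif_hom}) with ${\bf z}(t_0) = {\bf X}(t_0){\bf q} = {\bf x}(t_0)$, so by uniqueness (Theorem \ref{th:uniq_init_sol}) we get ${\bf z}(t) = {\bf x}(t)$ for all $t\in I$; that is, ${\bf x}(t) = {\bf x}_1(t)q_1 + \cdots + {\bf x}_n(t)q_n$, as claimed.

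The main obstacle is really the second paragraph: one must be careful that right linear independence is invoked in its function‑space sense (a dependence relation holding for all $t$), whereas invertibility of ${\bf X}(t_0)$ via Theorem \ref{theorem:inver_equiv} needs independence only of the numerical values at $t_0$ — and the uniqueness theorem is precisely what bridges these two notions. Everything else is a routine transcription of the commutative fundamental‑matrix proof, the only discipline being to keep all quaternion scalars on the right throughout.
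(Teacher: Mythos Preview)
Your proof is correct and follows essentially the same fundamental-matrix strategy as the paper: form ${\bf X}(t)$ from the given solutions, argue invertibility of ${\bf X}(t_0)$, solve ${\bf X}(t_0){\bf q}={\bf x}(t_0)$, and invoke uniqueness (Theorem~\ref{th:uniq_init_sol}). In fact your second paragraph is more careful than the paper's own argument: the paper simply cites Theorems~\ref{theorem:deter_inver} and~\ref{theorem:crit_depen} to pass from the hypothesis of right linear independence to invertibility of ${\bf X}(t_0)$, tacitly identifying function-space independence with independence of the numerical columns at $t_0$, whereas you explicitly use uniqueness to bridge that gap.
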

\begin{proof} By Theorem \ref{th:riht_lin_comb_sol}, the right linear combination ${\bf x}(t)={\bf x}_{1}(t)q_{1}+\cdots+{\bf x}_{n}(t)q_{n}$  is  a solution of (\ref{eq:right_sys_dif_hom}) as well. We now must prove that, in the case
that $\{{\bf x}_{1}, \cdots, {\bf x}_{n}\}$ is right linearly independent, every solution of (\ref{eq:right_sys_dif_hom}) is
included in this linear combination.

Let ${\bf x}$ be any solution of the differential equation (\ref{eq:right_sys_dif_hom}). Due to uniqueness statement in
Theorem \ref{th:uniq_init_sol}, this is the unique solution that at $t_{0}$ takes the value ${\bf x}(t_{0})$. This
means that the initial data ${\bf x}(t_{0})$ parameterizes all solutions to the differential equation (\ref{eq:right_sys_dif_hom}). Then, we shall find the constants $q_{1},\cdots, q_{n}$ as solutions of the algebraic linear system,
\begin{equation*}
{\bf x}(t_{0})={\bf x}_{1}(t_{0})q_{1}+\cdots+{\bf x}_{n}(t_{0})q_{n}.
\end{equation*}
Introducing the notation
\begin{equation*}
{\bf X}(t)=\left[{\bf x}_{1}(t),\ldots,{\bf x}_{n}(t)\right],\,\,\,{\bf q}=\begin{bmatrix}
  q_{1} \\
   \vdots\\
  q_{n}
\end{bmatrix},
\end{equation*}
the algebraic linear system has the form
\begin{equation*}
{\bf x}(t_{0})={\bf X}(t_{0}){\bf q}.
\end{equation*}
This algebraic system has a unique solution ${\bf q}$ for every source ${\bf x}(t_{0})$ when the matrix ${\bf X}(t_{0})$
is invertible. By Theorem \ref{theorem:deter_inver}, the necessary and sufficient condition of invertibility
of  ${\bf X}(t_{0}) \in {\rm M}(n,{{\rm {\mathbb{H}}}})$ is
${\rm{ddet}} {\bf X}(t_{0}) \ne 0$. By Theorem \ref{theorem:crit_depen}, it is equivalent that $\{{\bf x}_{1}, \cdots, {\bf x}_{n}\}$ is right linearly independent.\end{proof}
\begin{theorem}
 If $\{{\bf x}_{1}, \cdots, {\bf x}_{n}\}$ is a left linearly independent set of
solutions of (\ref{eq:left_sys_dif_hom}), where ${\bf A}$ is a continuous matrix valued function, then
there exist constant quaternions $q_{1},\cdots, q_{n}$, ($q_{i} \in {\mathbb H}$ for all $i=1,\ldots,n$) such that every solution ${\bf x}$ of (\ref{eq:left_sys_dif_hom}) can be written as the left linear combination
\begin{equation}
{\bf x}(t)=q_{1}{\bf x}_{1}(t)+\cdots+q_{n}{\bf x}_{n}(t).
\end{equation}
 \end{theorem}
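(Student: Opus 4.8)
The plan is to mirror, line for line, the proof of Theorem \ref{th:right_sol}, but transposed to the left-module / row-vector setting, so that "right linear combination" becomes "left linear combination", the matrix built from the solutions acts on the right, and the relevant Cramer-type statement is Theorem \ref{theorem:left_system} rather than its right counterpart. First I would observe that, by the left analogue of Theorem \ref{th:riht_lin_comb_sol} (the theorem stated immediately after it), any left linear combination $q_{1}{\bf x}_{1}(t)+\cdots+q_{n}{\bf x}_{n}(t)$ with constant $q_{i}\in{\mathbb H}$ is again a solution of (\ref{eq:left_sys_dif_hom}); hence the right-hand side of the asserted formula is always a solution, and it only remains to show that \emph{every} solution is obtained in this way.

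Next I would fix $t_{0}$ in the interval $I$ of continuity of ${\bf A}$ and let ${\bf x}$ be an arbitrary solution of (\ref{eq:left_sys_dif_hom}). By the uniqueness part of Theorem \ref{th:uniq_init_sol}, ${\bf x}$ is the unique solution taking the value ${\bf x}(t_{0})$ at $t_{0}$, so it suffices to produce constants $q_{1},\dots,q_{n}$ satisfying the algebraic equation
\[
q_{1}{\bf x}_{1}(t_{0})+\cdots+q_{n}{\bf x}_{n}(t_{0})={\bf x}(t_{0}),
\]
because the left combination built from such $q_{i}$ is then a solution of (\ref{eq:left_sys_dif_hom}) agreeing with ${\bf x}$ at $t_{0}$, hence equal to ${\bf x}$ on all of $\tilde I$ by uniqueness. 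Introducing the matrix ${\bf X}(t)$ whose $i$th \emph{row} is ${\bf x}_{i}(t)$ and the row of unknowns ${\bf q}=(q_{1},\dots,q_{n})$, this is precisely a left linear system ${\bf q}\,{\bf X}(t_{0})={\bf x}(t_{0})$ of the type (\ref{eq:left_syst}), which is uniquely solvable for every source exactly when ${\bf X}(t_{0})\in{\rm M}(n,{\mathbb H})$ is invertible; cf. Theorem \ref{theorem:left_system}.

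It therefore remains to check that ${\bf X}(t_{0})$ is invertible, and this is the step I expect to carry the real content. By Theorem \ref{theorem:inver_equiv} (equivalently Theorem \ref{theorem:deter_inver}), invertibility of ${\bf X}(t_{0})$ is equivalent to ${\rm ddet}\,{\bf X}(t_{0})\neq 0$, equivalently to left-linear independence of the rows ${\bf x}_{1}(t_{0}),\dots,{\bf x}_{n}(t_{0})$ in ${\mathbb H}^{1\times n}$. The hard part is the passage from left-linear independence of the \emph{functions} $\{{\bf x}_{1},\dots,{\bf x}_{n}\}$ on $I$ (the hypothesis) to left-linear independence of their \emph{values} at the single point $t_{0}$: this is the ODE Wronskian mechanism. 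If $c_{1}{\bf x}_{1}(t_{0})+\cdots+c_{n}{\bf x}_{n}(t_{0})={\bf 0}$ for constants $c_{i}$ not all zero, then $c_{1}{\bf x}_{1}(t)+\cdots+c_{n}{\bf x}_{n}(t)$ is a solution of (\ref{eq:left_sys_dif_hom}) with zero initial value at $t_{0}$, hence identically zero on $\tilde I$ by Theorem \ref{th:uniq_init_sol}, contradicting the assumed left-linear independence of the solution set. Thus ${\bf X}(t_{0})$ is invertible, the constants $q_{i}$ exist (and are constant), and the theorem follows. As an alternative, the whole statement can instead be deduced directly from Theorem \ref{th:right_sol} by applying the conjugate transpose: (\ref{eq:left_sys_dif_hom}) becomes the right system $({\bf x}^{*})'={\bf A}^{*}(t){\bf x}^{*}$, and a left-linearly independent row-solution set becomes a right-linearly independent column-solution set, since $q_{1}{\bf x}_{1}+\cdots+q_{n}{\bf x}_{n}={\bf 0}$ holds if and only if ${\bf x}_{1}^{*}\overline{q}_{1}+\cdots+{\bf x}_{n}^{*}\overline{q}_{n}={\bf 0}$.
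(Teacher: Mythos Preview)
Your proposal is correct and follows exactly the approach the paper intends: the paper's own proof of this theorem consists of the single sentence ``The proof is similar to the proof of Theorem \ref{th:right_sol},'' and your write-up is precisely that left/row-vector transposition of the right-hand argument. In fact you are slightly more careful than the paper, since you explicitly supply the Wronskian-type step (using uniqueness from Theorem \ref{th:uniq_init_sol} to pass from left-linear independence of the solution \emph{functions} to left-linear independence of their values at $t_{0}$), a point the proof of Theorem \ref{th:right_sol} leaves implicit.
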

\begin{proof}
The proof is similar to the proof of Theorem \ref{th:right_sol}.
\end{proof}
So, we obtain the following definitions.
\begin{definition}Let ${\bf x}_{i}(t)\in {\mathbb H}^{n\times 1}\otimes {\mathbb{R}}$ be a quaternion column-vector valued function for all $i=1,\ldots,n.$ \begin{enumerate}
\item The set  $\{{\bf x}_{1}, \ldots, {\bf x}_{n}\}$ is a fundamental set of solutions of (\ref{eq:right_sys_dif_hom}) if it is a set of right-linearly independent column-vectors which are solutions of (\ref{eq:right_sys_dif_hom}).
\item The general solution of the homogeneous equation (\ref{eq:right_sys_dif_hom}) denotes any quaternion column-vector valued
function ${\bf x}_{gen}$ that can be written as a right linear combination
\begin{equation*}
{\bf x}_{gen}(t)={\bf x}_{1}(t)q_{1}+\cdots+{\bf x}_{n}(t)q_{n},
\end{equation*}
where  $\{{\bf x}_{1}, \ldots, {\bf x}_{n}\}$ is a fundamental set of solutions of (\ref{eq:right_sys_dif_hom}), while $q_{1}, \ldots, q_{n}$ are arbitrary quaternion constants.
\item  A solution matrix $
{\bf X}_{r}(t)=\left[{\bf x}_{1}(t),\ldots,{\bf x}_{n}(t)\right]$
 is called a fundamental matrix of (\ref{eq:right_sys_dif_hom}) if the set $\{{\bf x}_{1}, \ldots, {\bf x}_{n}\}$ is a fundamental set.
                              \end{enumerate}
\end{definition}
\begin{definition}Let ${\bf x}_{i}(t)\in {\mathbb H}^{1\times n}\otimes {\mathbb{R}}$ be a quaternion row-vector valued function for all $i=1,\ldots,n.$
\begin{enumerate}
\item The set  $\{{\bf x}_{1}, \ldots, {\bf x}_{n}\}$  is a fundamental set of solutions of (\ref{eq:left_sys_dif_hom}) if it is a set of left-linearly independent row-vectors which are solutions of (\ref{eq:left_sys_dif_hom}).
\item The general solution of  (\ref{eq:left_sys_dif_hom}) denotes any quaternion row-vector valued
function ${\bf x}_{gen}$ that can be written as a left linear combination
\begin{equation*}
{\bf x}_{gen}(t)=q_{1}{\bf x}_{1}(t)+\cdots+q_{n}{\bf x}_{n}(t),
\end{equation*}
where $\{{\bf x}_{1}, \ldots, {\bf x}_{n}\}$ is a fundamental set of solutions of (\ref{eq:left_sys_dif_hom}), while $q_{1}, \ldots, q_{n}$ are arbitrary quaternion constants.
\item A solution matrix ${\bf X}_{l}(t)=\begin{bmatrix}{\bf x}_{1}(t)\\\vdots\\{\bf x}_{n}(t)\end{bmatrix}$
 is called a fundamental matrix of (\ref{eq:left_sys_dif_hom}) if the set $\{{\bf x}_{1}, \ldots, {\bf x}_{n}\}$ is a fundamental set.
                              \end{enumerate}
\end{definition}

\begin{remark}From the above definitions, it follows that the general solutions of (\ref{eq:right_sys_dif_hom}) are (\ref{eq:left_sys_dif_hom}), respectively, can be represented as
\begin{gather}\label{eq:gen_right}{\bf x}_{gen}(t)={\bf X}_{r}(t){\bf q},\\
\label{eq:gen_left}{\bf x}_{gen}(t)={\bf q}\,{\bf X}_{l}(t).
\end{gather}
Moreover, for given Initial Value Problems, ${\bf x}(t_{0})={\bf x}^{0}$, we have, respectively,
\begin{gather}\label{eq:gen_right_init}{\bf x}_{gen}(t)={\bf X}_{r}(t){\bf X}_{r}^{-1}(t_{0}){\bf x}^{0},\\
\label{eq:gen_left_init}{\bf x}_{gen}(t)={\bf x}^{0}{\bf X}_{l}^{-1}(t_{0}){\bf X}_{l}(t).
\end{gather}
\end{remark}

 \subsection{General solutions of non-homogenous systems.}
Firstly, we note that by simple checking can be proved the following lemma.
\begin{lemma} If ${\bf u}^{NH}$ is a solution of the right nonhomogenous system (\ref{eq:right_sys_dif}), and  ${\bf v}^{H}$ is a solution of the right homogenous system (\ref{eq:right_sys_dif_hom}), then ${\bf u}^{NH}+{\bf v}^{H}$ is a solution of (\ref{eq:right_sys_dif}). Similarly, we have to left systems (\ref{eq:left_sys_dif})-(\ref{eq:left_sys_dif_hom}).
\end{lemma}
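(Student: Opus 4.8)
The plan is to verify directly that the sum satisfies the defining differential equation, exploiting only the linearity of differentiation and the distributivity of the quaternion matrix--vector product, exactly in the spirit of the proof of Theorem~\ref{th:riht_lin_comb_sol}. First I would set ${\bf x} := {\bf u}^{NH} + {\bf v}^{H}$ and differentiate termwise: since $\frac{{\rm d}}{{\rm d}t}$ acts componentwise on a quaternion vector valued function and is ${\mathbb R}$-linear, we get ${\bf x}' = ({\bf u}^{NH})' + ({\bf v}^{H})'$.

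Next I would substitute the hypotheses. By assumption $({\bf u}^{NH})' = {\bf A}(t){\bf u}^{NH} + {\bf b}(t)$ and $({\bf v}^{H})' = {\bf A}(t){\bf v}^{H}$, so ${\bf x}' = {\bf A}(t){\bf u}^{NH} + {\bf b}(t) + {\bf A}(t){\bf v}^{H}$. Then I would use that, for each fixed $t$, the map ${\bf y}\mapsto {\bf A}(t){\bf y}$ is additive on column vectors --- this is merely the left-distributive law for quaternion matrix multiplication and requires no commutativity --- to combine ${\bf A}(t){\bf u}^{NH} + {\bf A}(t){\bf v}^{H} = {\bf A}(t)\left({\bf u}^{NH} + {\bf v}^{H}\right) = {\bf A}(t){\bf x}$. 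Hence ${\bf x}' = {\bf A}(t){\bf x} + {\bf b}(t)$, which is precisely (\ref{eq:right_sys_dif}), so ${\bf x}$ is a solution.

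For the left systems the argument is the mirror image: with ${\bf x} := {\bf u}^{NH} + {\bf v}^{H}$ now row-vectors, I would differentiate termwise, substitute $({\bf u}^{NH})' = {\bf u}^{NH}{\bf A}(t) + {\bf b}(t)$ and $({\bf v}^{H})' = {\bf v}^{H}{\bf A}(t)$, and apply right-distributivity ${\bf u}^{NH}{\bf A}(t) + {\bf v}^{H}{\bf A}(t) = \left({\bf u}^{NH} + {\bf v}^{H}\right){\bf A}(t)$ to conclude ${\bf x}' = {\bf x}{\bf A}(t) + {\bf b}(t)$.

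There is essentially no obstacle: the only point to watch is that every instance of linearity invoked --- of the derivative and of matrix multiplication --- is with respect to \emph{addition} of vectors, not scalar multiplication, so the computation is literally identical to the classical real case and no appeal to commutativity of $\mathbb H$ is needed. The lemma is just the quaternionic form of the ``particular solution plus homogeneous solution'' principle, and the proof is the routine check the authors indicate by the phrase ``by simple checking''.
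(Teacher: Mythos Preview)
Your proof is correct and is precisely the ``simple checking'' the paper alludes to; the paper gives no explicit proof, merely noting the lemma follows by direct verification, which is exactly the termwise differentiation plus distributivity argument you wrote out.
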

Using the representations (\ref{eq:gen_right}) and (\ref{eq:gen_left}), respectively, for  ${\bf v}^{H}$, we evidently obtain the following theorem.
\begin{theorem}For any solutions ${\bf x}^{NH}$ of nonhomogenous systems (\ref{eq:right_sys_dif}) and (\ref{eq:left_sys_dif}), they can be expressed, respectively, as follows
\begin{equation*}{\bf x}^{NH}(t)={\bf X}_{r}(t){\bf q}+{\bf u}^{NH},
\end{equation*}
where ${\bf u}^{NH}\in {\mathbb H}^{n \times 1}$ is  an arbitrary solution of (\ref{eq:right_sys_dif}),  and ${\bf q}\in {\mathbb H}^{n \times 1}$ is a constant quaternionic column-vector;
\begin{equation*}{\bf x}^{NH}(t)={\bf q}\,{\bf X}_{l}(t)+{\bf u}^{NH},
\end{equation*}
where ${\bf u}^{NH}\in {\mathbb H}^{ 1\times n}$ is  an arbitrary solution of (\ref{eq:left_sys_dif}),  and ${\bf q}\in {\mathbb H}^{1\times n }$ is a constant quaternionic row-vector.
\end{theorem}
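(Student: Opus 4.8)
\textit{Proof plan.} The plan is to reduce the statement to the homogeneous structure theorems already established, via the superposition lemma immediately preceding it. Fix once and for all an arbitrary solution ${\bf u}^{NH}$ of (\ref{eq:right_sys_dif}); one exists by Theorem \ref{th:uniq_init_sol}. Let ${\bf x}^{NH}$ be any solution of (\ref{eq:right_sys_dif}). First I would show that the difference ${\bf v}:={\bf x}^{NH}-{\bf u}^{NH}$ solves the associated homogeneous system (\ref{eq:right_sys_dif_hom}): differentiating componentwise and using distributivity of the matrix--vector product over $\mathbb{H}$ (equivalently, running the computation in the proof of Theorem \ref{th:riht_lin_comb_sol} in reverse, which is precisely the content of the preceding lemma) gives ${\bf v}'=\bigl({\bf A}(t){\bf x}^{NH}+{\bf b}(t)\bigr)-\bigl({\bf A}(t){\bf u}^{NH}+{\bf b}(t)\bigr)={\bf A}(t){\bf v}$.

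Second, I would invoke Theorem \ref{th:right_sol}. Since ${\bf X}_r(t)=[{\bf x}_1(t),\ldots,{\bf x}_n(t)]$ is a fundamental matrix, the set $\{{\bf x}_1,\ldots,{\bf x}_n\}$ is a right-linearly independent set of solutions of (\ref{eq:right_sys_dif_hom}); hence every solution of (\ref{eq:right_sys_dif_hom})---in particular ${\bf v}$---is a right linear combination ${\bf v}(t)={\bf x}_1(t)q_1+\cdots+{\bf x}_n(t)q_n={\bf X}_r(t){\bf q}$ with constant ${\bf q}=(q_1,\ldots,q_n)^T\in\mathbb{H}^{n\times 1}$. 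Rearranging yields ${\bf x}^{NH}(t)={\bf X}_r(t){\bf q}+{\bf u}^{NH}$, which is the asserted form; conversely, any expression of this shape is a solution of (\ref{eq:right_sys_dif}) by the preceding lemma, so the description is exact.

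Third, the left case is handled symmetrically: taking ${\bf u}^{NH}$ to be an arbitrary solution of (\ref{eq:left_sys_dif}), the difference ${\bf x}^{NH}-{\bf u}^{NH}$ solves (\ref{eq:left_sys_dif_hom}) by the same cancellation, now using linearity of the row-vector--matrix product; and by the left-hand analogue of Theorem \ref{th:right_sol} this difference is a left linear combination ${\bf q}\,{\bf X}_l(t)$ with constant ${\bf q}\in\mathbb{H}^{1\times n}$, whence ${\bf x}^{NH}(t)={\bf q}\,{\bf X}_l(t)+{\bf u}^{NH}$.

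The only genuine subtlety---already absorbed into the cited statements---is the side on which the constant scalars act: because quaternionic column-vectors form a \emph{right} $\mathbb{H}$-space, the solution set of (\ref{eq:right_sys_dif_hom}) is closed only under right linear combinations (Theorem \ref{th:riht_lin_comb_sol}), so ${\bf q}$ must multiply ${\bf X}_r(t)$ from the right, and symmetrically in the left case. Everything else is the classical ``general solution $=$ particular solution $+$ general homogeneous solution'' argument, and the existence of the fundamental matrix ${\bf X}_r$ (respectively ${\bf X}_l$) invoked above is guaranteed by Theorem \ref{th:uniq_init_sol} together with Theorems \ref{theorem:deter_inver} and \ref{theorem:crit_depen}, exactly as in the proof of Theorem \ref{th:right_sol}. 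I do not anticipate any real obstacle beyond keeping the left/right bookkeeping consistent.
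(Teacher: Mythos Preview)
Your argument is correct and is precisely the ``evident'' reduction the paper has in mind: the paper does not write out a proof but simply states that the theorem follows from the preceding superposition lemma together with the representations (\ref{eq:gen_right}) and (\ref{eq:gen_left}) for ${\bf v}^{H}$. You have supplied exactly those details---showing ${\bf x}^{NH}-{\bf u}^{NH}$ is homogeneous and then invoking Theorem~\ref{th:right_sol} (and its left analogue)---so the approach coincides with the paper's.
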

The following theorem represent the general solution of the right nonhomogenous system (\ref{eq:right_sys_dif}).
\begin{theorem}\cite{kou3}The general solution of
(\ref{eq:right_sys_dif}) is given by
\begin{equation*}
{\bf x}^{NH}(t)={\bf X}_{r}(t){\bf q}+{\bf X}_{r}(t)\int_{t_{0}}^{t}{\bf X}^{-1}_{r}(s){\bf b}(s)dt,
\end{equation*}
where $t_{0}\in I\in {\mathbb R}$, ${\bf q}\in {\mathbb H}^{n \times 1}$ is a constant quaternionic column-vector.
\end{theorem}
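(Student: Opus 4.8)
The plan is to verify the proposed formula by the classical method of variation of parameters (variation of constants), adapted to the quaternion setting where one must be careful to keep every scalar factor on the correct (here the right) side of the vector-valued functions. First I would recall that, by the definition of a fundamental matrix, ${\bf X}_r(t)$ satisfies ${\bf X}_r'(t)={\bf A}(t){\bf X}_r(t)$ columnwise, and that ${\bf X}_r(t)$ is invertible for every $t\in I$ (by the fundamental-set hypothesis together with Theorems \ref{theorem:deter_inver} and \ref{theorem:crit_depen}); hence ${\bf X}_r^{-1}(t)$ is a well-defined differentiable matrix-valued function. Differentiating the identity ${\bf X}_r(t){\bf X}_r^{-1}(t)={\bf I}$ and using the product rule for quaternion-valued functions (Proposition on derivative properties) gives $\left({\bf X}_r^{-1}\right)'(t)=-{\bf X}_r^{-1}(t){\bf A}(t)$, which I will need below.

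Next I would look for a solution of (\ref{eq:right_sys_dif}) in the form ${\bf x}(t)={\bf X}_r(t){\bf c}(t)$, where ${\bf c}(t)\in{\mathbb H}^{n\times1}\otimes{\mathbb R}$ is an unknown column-vector of quaternion-valued functions placed on the right, so that the ansatz respects the right $\mathbb H$-space structure used throughout Section 3.2. Differentiating, ${\bf x}'={\bf X}_r'{\bf c}+{\bf X}_r{\bf c}'={\bf A}{\bf X}_r{\bf c}+{\bf X}_r{\bf c}'={\bf A}{\bf x}+{\bf X}_r{\bf c}'$; comparing with (\ref{eq:right_sys_dif}) forces ${\bf X}_r(t){\bf c}'(t)={\bf b}(t)$, hence ${\bf c}'(t)={\bf X}_r^{-1}(t){\bf b}(t)$. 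Integrating componentwise from $t_0$ to $t$ (legitimate since ${\bf X}_r^{-1}{\bf b}$ has integrable components when ${\bf A},{\bf b}$ are continuous) yields ${\bf c}(t)={\bf q}+\int_{t_0}^t{\bf X}_r^{-1}(s){\bf b}(s)\,ds$ for an arbitrary constant column-vector ${\bf q}\in{\mathbb H}^{n\times1}$. Substituting back gives
\begin{equation*}
{\bf x}(t)={\bf X}_r(t){\bf q}+{\bf X}_r(t)\int_{t_0}^t{\bf X}_r^{-1}(s){\bf b}(s)\,ds,
\end{equation*}
which is exactly the asserted formula (the $dt$ in the statement being a typographical slip for $ds$).

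To close the argument I would verify that every solution of (\ref{eq:right_sys_dif}) is captured: if ${\bf x}^{NH}$ denotes the particular solution obtained by taking ${\bf q}={\bf 0}$ above, and ${\bf x}$ is any solution of (\ref{eq:right_sys_dif}), then by the preceding Lemma the difference ${\bf x}-{\bf x}^{NH}$ solves the homogeneous system (\ref{eq:right_sys_dif_hom}), so by Theorem \ref{th:right_sol} it equals ${\bf X}_r(t){\bf q}$ for a suitable constant ${\bf q}$, giving the claimed representation. The only genuinely delicate point — the ``main obstacle'' — is bookkeeping of sidedness: one must place ${\bf c}(t)$, ${\bf q}$, and the integrand's scalar factors consistently on the right, and one must use the correct noncommutative formula $\left({\bf X}_r^{-1}\right)'=-{\bf X}_r^{-1}{\bf A}$ rather than its commutative look-alike; none of the steps requires a commutativity hypothesis, which is why (unlike Proposition \ref{pr:lin_eq}) no analogue of condition (\ref{eq:prop_com}) appears. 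Everything else is a routine transcription of the real-variable variation-of-parameters computation, valid because differentiation and integration of quaternion-valued functions of a real variable are $\mathbb R$-linear and obey the Leibniz rule.
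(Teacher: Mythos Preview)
Your argument is correct. Note, however, that the paper does not give its own proof of this theorem: it merely cites \cite{kou3} (where the result is established via the Chen determinant) and instead proves in detail the companion left-system result (\ref{eq:end_gen_left_non}). Your variation-of-parameters derivation is precisely the right-handed mirror image of the paper's proof of that left-system theorem: the paper sets ${\bf x}(t)={\bf q}(t){\bf X}_l(t)$, differentiates, uses ${\bf X}_l'={\bf X}_l{\bf A}$ to cancel terms, solves ${\bf q}'(t)={\bf b}(t){\bf X}_l^{-1}(t)$, integrates, and then verifies by direct differentiation---exactly your steps with sides swapped. One minor remark: the identity $({\bf X}_r^{-1})'=-{\bf X}_r^{-1}{\bf A}$ that you derive is correct but, as in the paper's left-system proof, you never actually need it; the cancellation comes directly from ${\bf X}_r'={\bf A}{\bf X}_r$.
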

This theorem is completely proved in \cite{kou3} using the  Chen determinant. We prove the following theorem about the general solution of the left nonhomogenous system (\ref{eq:left_sys_dif}) within the framework of the theory of column-row determinants.

\begin{theorem}The general solution of
(\ref{eq:left_sys_dif}) is given by
\begin{equation}
\label{eq:end_gen_left_non}
{\bf x}^{NH}_{gen}(t)={\bf q}{\bf X}_{l}(t)+\int_{t_{0}}^{t}{\bf b}(s){\bf X}^{-1}_{l}(s)dt\,{\bf X}_{l}(t),
\end{equation}
where $t_{0}\in I\subset {\mathbb R}$, ${\bf q}\in {\mathbb H}^{ 1\times n}$ is a constant quaternionic row-vector.
\end{theorem}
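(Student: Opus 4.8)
The plan is to prove the formula by the quaternionic version of variation of parameters, keeping scrupulous track of which side each scalar factor acts on. First I would record the properties of the fundamental matrix ${\bf X}_{l}(t)$ whose rows form a fundamental set of solutions of (\ref{eq:left_sys_dif_hom}): differentiating each row gives ${\bf X}'_{l}(t)={\bf X}_{l}(t){\bf A}(t)$; by Theorems \ref{theorem:deter_inver} and \ref{theorem:inver_equiv} the matrix ${\bf X}_{l}(t)$ is invertible for every $t\in I$ because its rows are left-linearly independent; and ${\bf X}_{l}^{-1}(t)$ is continuously differentiable, since by the determinantal representation in Theorem \ref{theorem:deter_inver} each of its entries is a ratio of row/column determinants in the continuously differentiable entries of ${\bf X}_{l}(t)$ with nonvanishing denominator ${\rm ddet}\,{\bf X}_{l}(t)$.

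Next I would look for a particular solution of (\ref{eq:left_sys_dif}) of the form ${\bf x}_{p}(t)={\bf c}(t){\bf X}_{l}(t)$ with ${\bf c}(t)$ an unknown quaternionic row-vector valued function; note that ${\bf c}$ must stand to the left of ${\bf X}_{l}$ precisely because the rows of ${\bf X}_{l}$ span the relevant left $\mathbb{H}$-space. Applying the product rule for quaternion-valued functions entrywise to the matrix product and using ${\bf X}'_{l}={\bf X}_{l}{\bf A}$,
\[
{\bf x}'_{p}(t)={\bf c}'(t){\bf X}_{l}(t)+{\bf c}(t){\bf X}'_{l}(t)={\bf c}'(t){\bf X}_{l}(t)+{\bf x}_{p}(t){\bf A}(t),
\]
so ${\bf x}_{p}$ solves ${\bf x}'={\bf x}{\bf A}(t)+{\bf b}(t)$ if and only if ${\bf c}'(t){\bf X}_{l}(t)={\bf b}(t)$, i.e. ${\bf c}'(t)={\bf b}(t){\bf X}_{l}^{-1}(t)$. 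The right-hand side is continuous, hence integrable on $I$, and taking ${\bf c}(t)=\int_{t_{0}}^{t}{\bf b}(s){\bf X}_{l}^{-1}(s)\,ds$ yields the particular solution ${\bf x}_{p}(t)=\left(\int_{t_{0}}^{t}{\bf b}(s){\bf X}_{l}^{-1}(s)\,ds\right){\bf X}_{l}(t)$, which vanishes at $t_{0}$.

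Finally I would assemble the general solution. By the Lemma preceding the statement and the structure theorem for non-homogeneous systems, every solution of (\ref{eq:left_sys_dif}) equals ${\bf x}_{p}$ plus some solution of the homogeneous system (\ref{eq:left_sys_dif_hom}); by the theorem on general solutions of the homogeneous left system the latter is ${\bf q}{\bf X}_{l}(t)$ for a constant row-vector ${\bf q}$ (replacing ${\bf c}$ by another antiderivative merely shifts ${\bf q}$, so it changes nothing), which gives (\ref{eq:end_gen_left_non}). Conversely every such expression is a solution by the computation above, and it captures all of them: given an arbitrary solution ${\bf x}$, invertibility of ${\bf X}_{l}(t_{0})$ lets one solve ${\bf q}{\bf X}_{l}(t_{0})={\bf x}(t_{0})$ for ${\bf q}$, and Theorem \ref{th:uniq_init_sol} then forces ${\bf x}$ to coincide with the corresponding member of the family. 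The only delicate points -- more bookkeeping than genuine obstacle -- are keeping every factor on its correct side throughout (so that ${\bf X}_{l}^{-1}$ multiplies ${\bf b}$ on the right and ${\bf c}$ multiplies ${\bf X}_{l}$ on the left) and the justification that ${\bf X}_{l}^{-1}(t)$ is differentiable, for which one leans on the Cramer-type formula of Theorem \ref{theorem:deter_inver}.
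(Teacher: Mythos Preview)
Your proposal is correct and follows essentially the same variation-of-parameters argument as the paper: seek a solution of the form ${\bf c}(t){\bf X}_{l}(t)$, use ${\bf X}'_{l}={\bf X}_{l}{\bf A}$ to reduce to ${\bf c}'(t)={\bf b}(t){\bf X}_{l}^{-1}(t)$, integrate, and combine with the homogeneous solution ${\bf q}{\bf X}_{l}(t)$. The only differences are organizational---you first isolate a particular solution and then superpose the homogeneous part, whereas the paper lets the constant of integration carry ${\bf q}$ directly---and that you add explicit justification (via Theorems \ref{theorem:deter_inver} and \ref{theorem:inver_equiv}) for the invertibility and differentiability of ${\bf X}_{l}^{-1}(t)$, which the paper takes for granted.
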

\begin{proof}By (\ref{eq:gen_left}) the general solution of (\ref{eq:left_sys_dif_hom}) is
$
 {\bf x}_{gen}(t)={\bf q}\,{\bf X}_{l}(t),
$
where ${\bf q}\in {\mathbb H}^{1 \times n}$ is a constant quaternionic row-vector. Let us find a solution of
 \begin{equation}\label{eq:left_sys_dif1}
 {\bf x}'= {\bf x}(t){\bf A}(t)+{\bf b}(t),
\end{equation}
 in the form,
\begin{equation}
\label{eq:left_sys_dif_nonhom1}
 {\bf x}(t)={\bf q}(t)\,{\bf X}_{l}(t),
\end{equation}
where ${\bf q}(t)\in {\mathbb H}^{1 \times n}\otimes{\mathbb R}$ is a  quaternionic row-vector function.
Differentiating (\ref{eq:left_sys_dif_nonhom1}), we have
\begin{equation}
\label{eq:dif_left_sys_dif_nonhom1}
 {\bf x}^{'}(t)=\left[{\bf q}(t)\,{\bf X}_{l}(t)\right]^{'}={\bf q}^{'}(t)\,{\bf X}_{l}(t)+{\bf q}(t){\bf X}^{'}_{l}(t).
\end{equation}
Substituting (\ref{eq:dif_left_sys_dif_nonhom1}) and (\ref{eq:left_sys_dif_nonhom1}) in  (\ref{eq:left_sys_dif1}), we obtain
\begin{equation}
\label{eq:dif_left1}
 {\bf q}^{'}(t)\,{\bf X}_{l}(t)+{\bf q}(t)\,{\bf X}^{'}_{l}(t)={\bf q}(t)\,{\bf X}_{l}(t){\bf A}(t)+{\bf b}(t).
\end{equation}
Since ${\bf X}_{l}(t)$ is a solution of the corresponding homogenous system (\ref{eq:left_sys_dif_hom}), then ${\bf X}^{'}_{l}(t)={\bf X}_{l}(t){\bf A}(t)$. Therefore, ${\bf q}^{'}(t)\,{\bf X}_{l}(t)={\bf b}(t)$ which implies
\begin{equation}
\label{eq:dif_q1}
{\bf q}^{'}(t)={\bf b}(t){\bf X}^{-1}_{l}(t).
\end{equation}
Integrating (\ref{eq:dif_q1}) over $[t_{0}, t]$, we have
\begin{equation}
\label{eq:dif_q2}
{\bf q}(t)=\int_{t_{0}}^{t}{\bf b}(s){\bf X}^{-1}_{l}(s)dt+{\bf q},
\end{equation}
where ${\bf q}\in {\mathbb H}^{1 \times n}$ is a constant quaternionic row-vector. Substituting (\ref{eq:dif_q2}) in (\ref{eq:left_sys_dif_nonhom1}), we obtain the general solution of
(\ref{eq:left_sys_dif}) representing by (\ref{eq:end_gen_left_non}).

Finally, we must verify that (\ref{eq:end_gen_left_non}) is a solution to (\ref{eq:right_sys_dif}). Differentiating (\ref{eq:end_gen_left_non}), we obtain
\begin{multline*}
[{\bf x}^{NH}_{gen}(t)]^{'}={\bf q}{\bf X}^{'}_{l}(t)+{\bf b}(t){\bf X}^{-1}_{l}(s){\bf X}_{l}(t)+ \int_{t_{0}}^{t}{\bf b}(s){\bf X}^{-1}_{l}(s)dt\,{\bf X}^{'}_{l}(t)=\\
\left({\bf q}{\bf X}_{l}(t)+\int_{t_{0}}^{t}{\bf b}(s){\bf X}^{-1}_{l}(s)dt{\bf X}_{l}(t)\right){\bf A}(t)+{\bf b}(t)={\bf x}^{NH}_{gen}(t){\bf A}(t)+{\bf b}(t).
\end{multline*}
The proof is complete.
\end{proof}

 \subsection{Quaternionic  linear systems of differential equations with constant coefficients.}
 Let ${\bf A}\in {\mathbb H}^{n \times n}$ be a constant matrix. Using properties of the exponential of a quaternion matrix and similar to the real case, we can prove the following theorems about the initial value problems with left and right quaternionic  homogeneous linear systems of differential equations.
\begin{theorem}If ${\bf A}\in {\mathbb H}^{n \times n}$, $t_{0} \in {\mathbb R}$ is an arbitrary
constant, and ${\bf x_{0}}\in {\mathbb H}^{1 \times n}$ is any constant quaternionic row-vector, then the initial value problem for the unknown
quaternionic row-vector valued function ${\bf x}$ given by
\begin{equation*}
{\bf x}'= {\bf x}{\bf A},\,\,\,{\bf x}(t_{0})={\bf x}_{0}.
\end{equation*}
has a unique solution given by the formula
\begin{equation}\label{eq:left_sys_dif_cosh_hom_sol}
{\bf x}={\bf x}_{0}e^{{\bf A}(t-t_{0})}
\end{equation}
\end{theorem}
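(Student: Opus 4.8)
The plan is to establish the theorem in two independent parts: first a direct verification that ${\bf x}(t)={\bf x}_{0}e^{{\bf A}(t-t_{0})}$ solves the initial value problem, and then an appeal to the general uniqueness statement to conclude that no other solution exists. Since ${\bf A}$ is a constant matrix this mirrors the classical real/complex argument almost verbatim; the only genuinely quaternionic point to watch is the interplay of left multiplication by ${\bf x}_{0}$ with the (automatic) commutativity of ${\bf A}$ with its own exponential.

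For existence I would first check the initial condition: at $t=t_{0}$ we have ${\bf A}(t_{0}-t_{0})={\bf 0}$, and since $e^{{\bf 0}}={\bf I}$ by the established properties of the quaternionic matrix exponential, ${\bf x}(t_{0})={\bf x}_{0}{\bf I}={\bf x}_{0}$. Next I would verify the differential equation by differentiating the defining series of Definition \ref{def:exp_matr} term by term. Writing $e^{{\bf A}(t-t_{0})}=\sum_{k=0}^{\infty}\tfrac{1}{k!}{\bf A}^{k}(t-t_{0})^{k}$, the formally differentiated series is $\sum_{k=1}^{\infty}\tfrac{1}{(k-1)!}{\bf A}^{k}(t-t_{0})^{k-1}$; on any bounded interval $|t-t_{0}|\le R$ its $k$th term has norm at most $\tfrac{1}{(k-1)!}\|{\bf A}\|^{k}R^{k-1}$, and $\sum_{k\ge1}\tfrac{1}{(k-1)!}\|{\bf A}\|^{k}R^{k-1}=\|{\bf A}\|e^{\|{\bf A}\|R}<\infty$, so the Weierstrass $M$-test legitimizes term-by-term differentiation. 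This gives
\[
\frac{{\rm d}}{{\rm d}t}\,e^{{\bf A}(t-t_{0})}=\sum_{k=1}^{\infty}\frac{1}{(k-1)!}{\bf A}^{k}(t-t_{0})^{k-1}={\bf A}\,e^{{\bf A}(t-t_{0})}=e^{{\bf A}(t-t_{0})}{\bf A},
\]
the last equality because ${\bf A}$ commutes with every power of itself, hence with the partial sums, hence with $e^{{\bf A}(t-t_{0})}$. Left-multiplying by the constant row-vector ${\bf x}_{0}$ then yields ${\bf x}'(t)={\bf x}_{0}e^{{\bf A}(t-t_{0})}{\bf A}={\bf x}(t){\bf A}$, as required.

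For uniqueness I would simply invoke Theorem \ref{th:uniq_init_sol}: the coefficient function ${\bf A}$ (constant) and the source ${\bf b}\equiv{\bf 0}$ are continuous on all of ${\mathbb R}$, so the problem ${\bf x}'={\bf x}{\bf A}$, ${\bf x}(t_{0})={\bf x}_{0}$ has exactly one solution; having exhibited one, we are done. As an alternative self-contained argument, if ${\bf y}(t)$ is any solution, set ${\bf z}(t)={\bf y}(t)e^{-{\bf A}(t-t_{0})}$; using the product rule, ${\bf y}'={\bf y}{\bf A}$, the identity $\tfrac{{\rm d}}{{\rm d}t}e^{-{\bf A}(t-t_{0})}=-e^{-{\bf A}(t-t_{0})}{\bf A}$, and the commutation of ${\bf A}$ with $e^{-{\bf A}(t-t_{0})}$, one finds ${\bf z}'\equiv{\bf 0}$, so ${\bf z}$ is the constant ${\bf z}(t_{0})={\bf x}_{0}$ and ${\bf y}(t)={\bf x}_{0}e^{{\bf A}(t-t_{0})}$.

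I do not expect any serious obstacle: the only step needing a word of care is the term-by-term differentiation of a series of matrices with noncommuting entries, but this is harmless here because the parameter $t-t_{0}$ is a \emph{real} scalar and therefore commutes with all matrix coefficients, so the convergence estimate and the rearrangement are exactly as in the commutative case. Everything else is routine bookkeeping with the matrix-exponential properties already recorded in the paper.
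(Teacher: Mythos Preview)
Your proof is correct. The paper, however, proceeds differently: instead of verifying the formula and then invoking uniqueness, it derives the formula directly by the integrating-factor method. Starting from ${\bf x}'-{\bf x}{\bf A}={\bf 0}$, the paper right-multiplies by $e^{-{\bf A}t}$, recognizes the left-hand side as $({\bf x}e^{-{\bf A}t})'$, integrates to get ${\bf x}e^{-{\bf A}t}={\bf q}$ for a constant row-vector ${\bf q}$, and then evaluates at $t_{0}$ to identify ${\bf q}={\bf x}_{0}e^{-{\bf A}t_{0}}$. This simultaneously produces the formula and establishes uniqueness without a separate appeal to Theorem~\ref{th:uniq_init_sol}. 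Your alternative uniqueness argument via ${\bf z}(t)={\bf y}(t)e^{-{\bf A}(t-t_{0})}$ is essentially this same computation, so the two approaches converge there. What your write-up adds is an explicit justification of term-by-term differentiation of the exponential series via the Weierstrass $M$-test, which the paper simply absorbs into ``properties of the matrix exponential''; conversely, the paper's integrating-factor derivation is slightly more economical since it does not split existence and uniqueness into two steps.
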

\begin{proof} Rewrite the given  equation and multiply it on the right by $e^{-{\bf A}t}$,
\[{\bf x}^{'}e^{-{\bf A}t}-{\bf x}{\bf A}e^{-{\bf A}t}={\bf 0}\]
 Using the properties of the matrix exponential, we have
 \[\left({{\bf x}e^{-{\bf A}t}}\right)^{'}={\bf 0}.\]
 Integrating in the last equation above, and  denoting by $\bf q$ a constant row $n$-vector, we get
\[{\bf x}e^{-{\bf A}t}={\bf q}.\]
 Since  $\left({e^{-{\bf A}t}}\right)^{-1}={e^{{\bf A}t}}$, then ${\bf x}={\bf q}e^{{\bf A}t}$.
Evaluating at $t = t_0$ we get the constant vector ${\bf q}={\bf x}_{0}e^{-{\bf A}t_{0}}$ and the solution formula,
\[{\bf x}(t)
={\bf x}_{0}e^{-{\bf A}t_{0}}e^{{\bf A}t}\]
Taking account that $e^{-{\bf A}t_{0}}e^{{\bf A}t}=e^{{\bf A}(t-t_{0})}$, we finally obtain (\ref{eq:left_sys_dif_cosh_hom_sol}).

\end{proof}
The following theorem can be proved similarly.
\begin{theorem}If ${\bf A}\in {\mathbb H}^{n \times n}$, $t_{0} \in {\mathbb R}$ is an arbitrary
constant, and ${\bf x_{0}}\in {\mathbb H}^{n \times 1}$ is any constant quaternionic column-vector, then the initial value problem for the unknown
quaternionic column-vector valued function ${\bf x}$ given by
\begin{equation*}
{\bf x}'= {\bf A}{\bf x},\,\,\,{\bf x}(t_{0})={\bf x}_{0}.
\end{equation*}
has a unique solution given by the formula
\begin{equation}\label{eq:right_sys_dif_cosh_hom_sol}
{\bf x}=e^{{\bf A}(t-t_{0})}{\bf x}_{0}
\end{equation}
\end{theorem}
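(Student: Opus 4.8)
The plan is to mirror the proof of the preceding theorem, interchanging the sides of every matrix product: here the unknown is a column vector and ${\bf A}$ acts from the left, so I would left-multiply rather than right-multiply by the matrix exponential. First I would rewrite ${\bf x}' = {\bf A}{\bf x}$ as ${\bf x}' - {\bf A}{\bf x} = {\bf 0}$ and left-multiply by $e^{-{\bf A}t}$, obtaining $e^{-{\bf A}t}{\bf x}' - e^{-{\bf A}t}{\bf A}{\bf x} = {\bf 0}$. The crux is to recognize the left-hand side as a total derivative. Since ${\bf A}$ commutes with $e^{-{\bf A}t}$ and term-by-term differentiation of the absolutely convergent series (\ref{eq:exp_matr}) gives $\frac{{\rm d}}{{\rm d}t}e^{-{\bf A}t} = -{\bf A}e^{-{\bf A}t} = -e^{-{\bf A}t}{\bf A}$, the product rule for derivatives of quaternion-matrix-valued functions yields
\[
\left(e^{-{\bf A}t}{\bf x}(t)\right)' = -e^{-{\bf A}t}{\bf A}{\bf x}(t) + e^{-{\bf A}t}{\bf x}'(t) = e^{-{\bf A}t}\left({\bf x}'(t) - {\bf A}{\bf x}(t)\right) = {\bf 0}.
\]

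Integrating this identity over $[t_{0},t]$ then gives $e^{-{\bf A}t}{\bf x}(t) = {\bf q}$ for a constant column-vector ${\bf q}\in {\mathbb H}^{n\times 1}$, and left-multiplying by $\left(e^{-{\bf A}t}\right)^{-1} = e^{{\bf A}t}$ yields ${\bf x}(t) = e^{{\bf A}t}{\bf q}$. Evaluating at $t = t_{0}$ gives ${\bf x}_{0} = e^{{\bf A}t_{0}}{\bf q}$, hence ${\bf q} = e^{-{\bf A}t_{0}}{\bf x}_{0}$, and substituting back, using $e^{{\bf A}t}e^{-{\bf A}t_{0}} = e^{{\bf A}(t-t_{0})}$, produces the claimed formula (\ref{eq:right_sys_dif_cosh_hom_sol}). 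Uniqueness is immediate, since the derivation shows any solution of the initial value problem must coincide with $e^{{\bf A}(t-t_{0})}{\bf x}_{0}$; it also follows from Theorem~\ref{th:uniq_init_sol} applied with the constant (hence continuous) coefficient matrix ${\bf A}$ and ${\bf b}\equiv{\bf 0}$.

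The only delicate point — the exact analogue of the single nontrivial step in the preceding proof — is justifying that $t\mapsto e^{-{\bf A}t}$ is differentiable with derivative $-{\bf A}e^{-{\bf A}t}$. I expect this to be the main obstacle in spirit, though not in difficulty: the noncommutativity of $\mathbb H$ that forces the commutativity hypotheses in Propositions~\ref{prop:e_der}--\ref{prop:e} does not intrude here, because $-{\bf A}t$ is a real scalar multiple of the single fixed matrix ${\bf A}$, so all its "time slices" commute with one another, and term-by-term differentiation is legitimate by the absolute convergence bound $\|\sum_{k=0}^{m}\tfrac{1}{k!}{\bf A}^{k}\|\le e^{\|{\bf A}\|}$ noted after Definition~\ref{def:exp_matr}. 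Everything else is a verbatim transcription of the row-vector argument with the order of the factors in each product reversed.
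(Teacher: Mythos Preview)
Your proposal is correct and follows exactly the approach the paper indicates: the paper does not spell out a proof for this theorem but simply states that it ``can be proved similarly'' to the preceding left-system result, and you have faithfully carried out that mirror argument, left-multiplying by $e^{-{\bf A}t}$ in place of right-multiplying, and otherwise transcribing the steps verbatim.
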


 \begin{remark}
Comparing both corresponding pairs of the solution formulas (\ref{eq:gen_right_init}) and (\ref{eq:right_sys_dif_cosh_hom_sol}), (\ref{eq:gen_left_init}) and (\ref{eq:left_sys_dif_cosh_hom_sol}) gives the following relation,
\begin{equation*}
e^{{\bf A}(t-t_{0})}={\bf X}_{r}(t){\bf X}_{r}^{-1}(t_{0})={\bf X}_{l}^{-1}(t_{0}){\bf X}_{l}(t)
\end{equation*}
\end{remark}

 \begin{theorem}\label{tm:A_diag_right}If ${\bf A}\in {\mathbb H}^{n \times n}$ is diagonalizable,  then the right system
$
{\bf x}^{'} = {\bf  A} {\bf x}$
 has a general solution
\begin{equation}\label{eq:gen_const}
{\bf x}_{gen}(t)={\bf v}_{1} e^{\lambda_{1} t}q_{1}+\cdots+{\bf v}_{n} e^{\lambda_{n} t}q_{n},
\end{equation}
where $\left\{{\bf v}^{(1)},\ldots,{\bf v}^{(n)}\right\}$ is a set of linearly independent column eigenvectors and corresponding eigenvalues $\left\{\lambda_{1},\ldots,\lambda_{n}\right\}$. (They can be obtained as standard eigenvalues, i.e. $\lambda_{i}\in {\mathbb C}$ for all $i=1,\ldots,n$.)  Furthermore, every initial value problem with ${\bf x}(t_{0})={\bf x}_{0}$ has a unique
solution for every initial condition ${\bf x}_{0}\in {\mathbb H}^{n \times 1}$, where the constants $q_{1},\ldots,q_{n}$ are a solution of
the  algebraic linear system
\begin{equation}\label{eq:gen_const_init}
{\bf x}_{0}={\bf v}_{1} e^{\lambda_{1} t}q_{1}+\cdots+{\bf v}_{n} e^{\lambda_{n} t}q_{n},
\end{equation}
and this solution is given by
\begin{equation}\label{eq:sol_right_const}
{\bf x}(t)={\bf X}_{r}(t){\bf X}_{r}^{-1}(t_{0}){\bf x}_{0}
\end{equation}
where ${\bf X}_{r}(t) = \left[{\bf v}_{1} e^{\lambda_{1} t}, \cdots,{\bf v}_{n} e^{\lambda_{n} t}\right]$
 is a fundamental matrix of the system.
\end{theorem}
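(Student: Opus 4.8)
The plan is first to check that each function ${\bf x}_i(t):={\bf v}_i e^{\lambda_i t}$, $i=1,\ldots,n$, solves the homogeneous right system ${\bf x}'={\bf A}{\bf x}$, then to show that $\{{\bf x}_1,\ldots,{\bf x}_n\}$ is a fundamental set, and finally to read off the general solution and the initial value problem formula from Theorems \ref{th:riht_lin_comb_sol}, \ref{th:right_sol} and \ref{th:uniq_init_sol}. By Theorem \ref{th:diag_gen} together with Corollaries \ref{th:diag_spec} and \ref{th:diag_norm}, the eigenpairs $(\lambda_i,{\bf v}_i)$ of a diagonalizable ${\bf A}$ may be taken with $\lambda_i\in{\mathbb C}$ standard, which is exactly what is needed so that $\lambda_i$ commutes with $t$ and with all its own powers, hence with $e^{\lambda_i t}$.

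For the first step I would differentiate ${\bf x}_i$ componentwise. Applying Proposition \ref{prop:e_der} to ${\bf q}(t)=\lambda_i t$ (for which ${\bf q}'(t){\bf q}(t)={\bf q}(t){\bf q}'(t)$ since $t\in{\mathbb R}$) gives $(e^{\lambda_i t})'=e^{\lambda_i t}\lambda_i=\lambda_i e^{\lambda_i t}$, whence, by the product rule for quaternion-valued functions and associativity of quaternion multiplication, ${\bf x}_i'(t)=({\bf v}_i\lambda_i)e^{\lambda_i t}$. On the other hand the quaternion scalar $e^{\lambda_i t}$ pulls out of the matrix--vector product on the right, so ${\bf A}{\bf x}_i(t)=({\bf A}{\bf v}_i)e^{\lambda_i t}$, and by the right eigenvalue equation (\ref{eq:right_eigen}) we have ${\bf A}{\bf v}_i={\bf v}_i\lambda_i$. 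Comparing the two expressions shows ${\bf x}_i'={\bf A}{\bf x}_i$, so each ${\bf x}_i$ is a solution; iterating Theorem \ref{th:riht_lin_comb_sol} then makes every right linear combination ${\bf x}_1q_1+\cdots+{\bf x}_nq_n$ a solution as well.

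For the second step I would write ${\bf X}_r(t)=[{\bf v}_1e^{\lambda_1 t},\ldots,{\bf v}_ne^{\lambda_n t}]={\bf P}\,{\rm diag}[e^{\lambda_1 t},\ldots,e^{\lambda_n t}]$ with ${\bf P}=[{\bf v}_1,\ldots,{\bf v}_n]$. Here ${\bf P}$ is invertible because its columns are right-linearly independent (Theorem \ref{theorem:inver_equiv}), and the diagonal factor is invertible since $e^{z}\ne 0$ for every $z\in{\mathbb C}$; hence ${\bf X}_r(t_0)$ is invertible for each $t_0$. If $\sum_i{\bf x}_i(t)c_i={\bf 0}$ identically, evaluation at $t_0$ gives ${\bf X}_r(t_0){\bf c}={\bf 0}$ and therefore ${\bf c}={\bf 0}$, so $\{{\bf x}_1,\ldots,{\bf x}_n\}$ is right-linearly independent, i.e. a fundamental set and ${\bf X}_r$ a fundamental matrix. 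Theorem \ref{th:right_sol} now yields that every solution of ${\bf x}'={\bf A}{\bf x}$ has the form (\ref{eq:gen_const}), that is, ${\bf x}_{gen}(t)={\bf X}_r(t){\bf q}$.

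Finally, imposing ${\bf x}(t_0)={\bf x}_0$ in (\ref{eq:gen_const}) gives the right algebraic system ${\bf X}_r(t_0){\bf q}={\bf x}_0$, which is (\ref{eq:gen_const_init}) evaluated at the initial time; since ${\bf X}_r(t_0)$ is invertible this has the unique solution ${\bf q}={\bf X}_r^{-1}(t_0){\bf x}_0$, which may be computed entrywise by the Cramer rule of Theorem \ref{theorem:right_system}, and substitution produces (\ref{eq:sol_right_const}); uniqueness of ${\bf x}$ as a solution of the differential system is already supplied by Theorem \ref{th:uniq_init_sol}. I expect the only delicate point to be the bookkeeping of sides in the noncommutative products --- it is essential that eigenvalues act on the right of eigenvectors, that the unknown constants $q_i$ sit on the right, and that $e^{\lambda_i t}$ commutes with $\lambda_i$ (which is what forces the use of complex $\lambda_i$); once these are respected, the argument is a faithful transcription of the real-variable one.
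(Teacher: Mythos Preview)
Your proof is correct but follows a different route from the paper. The paper argues by the change of variables ${\bf y}={\bf P}^{-1}{\bf x}$, which turns the system into the decoupled system ${\bf y}'={\bf D}{\bf y}$; each scalar equation $y_i'=\lambda_i y_i$ is then solved by Proposition \ref{pr:lin_eq}, and the transformation ${\bf x}={\bf P}{\bf y}$ recovers (\ref{eq:gen_const}) directly. You instead verify by hand that each ${\bf v}_i e^{\lambda_i t}$ is a solution, establish right-linear independence via the factorization ${\bf X}_r(t)={\bf P}\,{\rm diag}[e^{\lambda_1 t},\ldots,e^{\lambda_n t}]$, and then invoke the abstract structure Theorem \ref{th:right_sol} to conclude that these solutions span everything. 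The paper's approach is more constructive and self-contained (it derives the solution form rather than guessing it, and does not lean on Theorem \ref{th:right_sol}); your approach is cleaner once the general theory is available and makes the invertibility of ${\bf X}_r(t)$ transparent through the product decomposition. One minor remark: the commutation of $\lambda_i$ with $e^{\lambda_i t}$ holds for any quaternion $\lambda_i$, not just complex ones, since $e^{\lambda_i t}$ is a power series in $\lambda_i t$; the choice of standard (complex) eigenvalues is a normalization coming from the diagonalization theory rather than a requirement of the differentiation step.
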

\begin{proof}Since
the coefficient matrix ${\bf A}$ is diagonalizable, there
exist an invertible matrix ${\bf P}$ and a diagonal matrix ${\bf D}$ such that ${\bf A}={\bf P}{\bf D}{\bf P}^{-1}$.
 Introduce
this expression into the given equation  and left-multiplying it by ${\bf P}^{-1}$,
\begin{equation*}
{\bf P}^{-1}{\bf x}^{'} ={\bf P}^{-1} ({\bf P}{\bf D}{\bf P}^{-1}) {\bf x}.
\end{equation*}
 Since  ${\bf A}$ is constant, so is ${\bf P}$ and ${\bf D}$. Then, ${\bf P}^{-1}{\bf x}^{'}=({\bf P}^{-1}{\bf x})^{'} $, and
\begin{equation*}
({\bf P}^{-1}{\bf x})^{'} ={\bf D}({\bf P}^{-1} {\bf x}).
\end{equation*}
Define the new variable ${\bf y}={\bf P}^{-1} {\bf x}$. The differential equation is now given by
\begin{equation}\label{eq:Dy}
{\bf y}^{'}(t) ={\bf D}{\bf y}(t).
\end{equation}
Transform
the initial condition by ${\bf P}^{-1} {\bf x}(t_{0}) = {\bf P}^{-1} {\bf x}_{0}$, and put ${\bf y}_{0} =  {\bf P}^{-1} {\bf x}_{0}$. We get the following initial condition,  ${\bf y}(t_{0})={\bf y}_{0}$.  Then from (\ref{eq:Dy}), we have the system every equation of which can be involving by Proposition \ref{pr:lin_eq} as follows
\begin{equation*}
\begin{cases}
y_{1}^{'}(t)=\lambda_{1} y_{1}(t)\\
\vdots\\
y_{n}^{'}(t)=\lambda_{1} y_{n}(t)
\end{cases}\Rightarrow
\begin{cases}
y_{1}(t)=e^{\lambda_{1} t}q_{1}\\
\vdots\\
y_{n}(t)=e^{\lambda_{n} t}q_{n}
\end{cases}
\Rightarrow {\bf y}(t)=\begin{bmatrix}e^{\lambda_{1} t}q_{1}\\
\vdots\\
e^{\lambda_{n} t}q_{n}
\end{bmatrix}.\end{equation*}
Now, transform ${\bf y}$ back to ${\bf x}$,
\begin{equation*}
{\bf x}={\bf P} {\bf y}=\begin{bmatrix}{\bf v}_{1}& \cdots& {\bf v}_{n}\end{bmatrix}\begin{bmatrix}e^{\lambda_{1} t}q_{1}\\
\vdots\\
e^{\lambda_{n} t}q_{n}
\end{bmatrix}={\bf v}_{1} e^{\lambda_{1} t}q_{1}+\cdots+{\bf v}_{n} e^{\lambda_{n} t}q_{n}.
\end{equation*}
where ${\bf v}_{i} = {\bf v}_{.i}$ is the $i$th column of ${\bf P}$.

So, we obtain (\ref{eq:gen_const}). Evaluating it at $t_{0}$ we get (\ref{eq:gen_const_init}).

 If we choose fundamental solutions of $
{\bf x}^{'} = {\bf  A} {\bf x}$ to be
\[
\{{\bf x}_{1}(t) = {\bf v}_{1} e^{\lambda_{1} t}, \ldots, {\bf x}_{n}(t) = {\bf v}_{n} e^{\lambda_{n} t}\},
\]
then the associated fundamental matrix is ${\bf X}_{r}(t) = \left[{\bf v}_{1} e^{\lambda_{1} t}, \cdots,{\bf v}_{n} e^{\lambda_{n} t}\right]$ and the general solution can be writing as $ {\bf x} = {\bf X}_{r}(t) {\bf q}$,  where ${\bf q} =\begin{bmatrix}q_{1}\\\vdots\\q_{n}\end{bmatrix}$. Now, from the initial condition,
${\bf x}_{0} = {\bf x}(t_{0})={\bf X}_{r}(t_{0}) {\bf q},
$
follows ${\bf q}={\bf X}^{-1}_{r}(t_{0}){\bf x}_{0}$,
which makes sense, since $X_{r}(t)$ is an invertible matrix for all $t$, where it is defined.
Using this
formula for the constant vector ${\bf q}$ gives (\ref{eq:sol_right_const}).

This completes the proof.
\end{proof}
\begin{theorem}If ${\bf A}\in {\mathbb H}^{n \times n}$ is diagonalizable,  then the left system
$
{\bf x}^{'} ={\bf x} {\bf  A}$
 has a general solution,
$
{\bf x}_{gen}(t)=q_{1}e^{\lambda_{1} t}{\bf v}_{1} +\cdots+q_{n}e^{\lambda_{n} t}{\bf v}_{n},
$
where $\left\{{\bf v}^{(1)},\ldots,{\bf v}^{(n)}\right\}$ is a set of linearly independent row eigenvectors and corresponding eigenvalues $\left\{\lambda_{1},\ldots,\lambda_{n}\right\}$.  Furthermore, every initial value problem with ${\bf x}(t_{0})={\bf x}_{0}$ has a unique
solution for every initial condition ${\bf x}_{0}\in {\mathbb H}^{1 \times n}$, where the constants $q_{1},\ldots,q_{n}$ are a solution of
the  algebraic linear system,
$
{\bf x}_{0}=q_{1}e^{\lambda_{1} t}{\bf v}_{1} +\cdots+q_{n}e^{\lambda_{n} t}{\bf v}_{n},
$
and this solution  is given by
$
{\bf x}={\bf x}_{0}{\bf X}_{l}^{-1}(t_{0}){\bf X}_{l}(t),
$
where ${\bf X}_{l}(t) = \begin{bmatrix} e^{\lambda_{1} t}{\bf v}_{1} \\ \vdots\\e^{\lambda_{n} t}{\bf v}_{n} \end{bmatrix}$
 is a fundamental matrix of the system.
\end{theorem}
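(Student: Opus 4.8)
The plan is to run the argument of Theorem~\ref{tm:A_diag_right} in its ``transposed'' form, keeping every scalar and every matrix on the side dictated by the left action ${\bf x}\mapsto{\bf x}{\bf A}$. First I would pin down the correct notion of a row eigenvector. By Theorem~\ref{th:diag_gen}, diagonalizability of ${\bf A}$ furnishes an invertible ${\bf P}$ and ${\bf D}={\rm diag}[\lambda_{1},\ldots,\lambda_{n}]$ with ${\bf A}={\bf P}{\bf D}{\bf P}^{-1}$, the $\lambda_{i}$ being taken as the standard eigenvalues (Corollary~\ref{th:diag_spec}). Writing ${\bf Q}:={\bf P}^{-1}$ and letting ${\bf v}_{1},\ldots,{\bf v}_{n}$ denote its rows, the identity ${\bf Q}{\bf A}={\bf D}{\bf Q}$ read off row by row gives ${\bf v}_{i}{\bf A}=\lambda_{i}{\bf v}_{i}$, so the ${\bf v}_{i}$ are precisely the row eigenvectors of the statement, and they are left-linearly independent because ${\bf Q}$ is invertible (Theorem~\ref{theorem:inver_equiv}).

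Next I would change the unknown. Substituting ${\bf A}={\bf P}{\bf D}{\bf P}^{-1}$ into ${\bf x}'={\bf x}{\bf A}$ and right-multiplying by the constant matrix ${\bf P}$ gives ${\bf x}'{\bf P}=({\bf x}{\bf P}){\bf D}$; since ${\bf P}$ is constant, $({\bf x}{\bf P})'={\bf x}'{\bf P}$, so in the new unknown ${\bf y}:={\bf x}{\bf P}$ the system becomes ${\bf y}'={\bf y}{\bf D}$, which uncouples into the scalar equations $y_{i}'=y_{i}\lambda_{i}$. Each of these is a right linear quaternion differential equation with a constant coefficient, so Proposition~\ref{pr:lin_eq} gives $y_{i}(t)=q_{i}e^{\lambda_{i}t}$ for some $q_{i}\in{\mathbb H}$. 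Transforming back through ${\bf x}={\bf y}{\bf P}^{-1}={\bf y}{\bf Q}$ and recalling that the rows of ${\bf Q}$ are the ${\bf v}_{i}$ yields ${\bf x}(t)=\sum_{i=1}^{n}y_{i}(t){\bf v}_{i}=\sum_{i=1}^{n}q_{i}e^{\lambda_{i}t}{\bf v}_{i}$, which is the claimed form of the general solution; evaluation at $t_{0}$ produces the stated algebraic system for the $q_{i}$.

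It then remains to justify the initial-value statement, which I would do as in (the left-hand analogue of) Theorem~\ref{th:right_sol}. Each row $e^{\lambda_{i}t}{\bf v}_{i}$ of ${\bf X}_{l}(t)$ solves the homogeneous left system, since $(e^{\lambda_{i}t}{\bf v}_{i})'=\lambda_{i}e^{\lambda_{i}t}{\bf v}_{i}=e^{\lambda_{i}t}\lambda_{i}{\bf v}_{i}=e^{\lambda_{i}t}{\bf v}_{i}{\bf A}$, and these rows stay left-linearly independent for every $t$: from $\sum_{i}c_{i}e^{\lambda_{i}t}{\bf v}_{i}={\bf 0}$ one gets $\sum_{i}(c_{i}e^{\lambda_{i}t}){\bf v}_{i}={\bf 0}$, hence $c_{i}e^{\lambda_{i}t}=0$ and thus $c_{i}=0$. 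Therefore ${\bf X}_{l}(t)$ is a fundamental matrix, and by Theorem~\ref{theorem:deter_inver} (equivalently Theorem~\ref{theorem:inver_equiv}) ${\bf X}_{l}(t_{0})$ is invertible, so the left linear system ${\bf x}_{0}={\bf q}\,{\bf X}_{l}(t_{0})$ has the unique solution ${\bf q}={\bf x}_{0}{\bf X}_{l}^{-1}(t_{0})$; inserting this into ${\bf x}(t)={\bf q}\,{\bf X}_{l}(t)$ gives ${\bf x}(t)={\bf x}_{0}{\bf X}_{l}^{-1}(t_{0}){\bf X}_{l}(t)$, and uniqueness among all solutions of the problem is Theorem~\ref{th:uniq_init_sol}. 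I expect no genuine obstacle; the only thing demanding care is the bookkeeping of sidedness --- in particular, verifying that $q_{i}e^{\lambda_{i}t}{\bf v}_{i}$ solves ${\bf x}'={\bf x}{\bf A}$ relies on the commutation $\lambda_{i}e^{\lambda_{i}t}=e^{\lambda_{i}t}\lambda_{i}$, and the change of variables must multiply ${\bf P}$ from the right throughout so that the decoupled equations come out in the form $y_{i}'=y_{i}\lambda_{i}$ rather than $y_{i}'=\lambda_{i}y_{i}$.
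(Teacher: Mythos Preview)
Your proposal is correct and is exactly the adaptation the paper intends: its proof merely says ``similar to the proof of Theorem~\ref{tm:A_diag_right} by using left eigenvalues of ${\bf A}$ in the sense of (\ref{eq:left_eigen}),'' and you have faithfully carried that out, with the right-multiplication by ${\bf P}$ replacing the left-multiplication by ${\bf P}^{-1}$ and the row eigenvectors ${\bf v}_i{\bf A}=\lambda_i{\bf v}_i$ replacing the column eigenvectors. Your bookkeeping of sidedness (in particular the commutation $\lambda_i e^{\lambda_i t}=e^{\lambda_i t}\lambda_i$ and the decoupled form $y_i'=y_i\lambda_i$) is the only delicate point, and you have handled it correctly.
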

\begin{proof}The proof is similar to the proof of Theorem \ref{tm:A_diag_right} by using left eigenvalues of ${\bf  A}$ in the sense of (\ref{eq:left_eigen}).
\end{proof}
Now, consider the solution formulas of an initial value problems for
 nonhomogeneous right and left linear systems.
\begin{theorem}\label{tm:A_const_right}If ${\bf A}\in {\mathbb H}^{n\times n}$ is constant and the quaternionic column $n$-vector valued function ${\bf b}(t)$
 is continuous, then the initial right value problem
\begin{gather}\label{eq:A_const_right}
{\bf x}^{'}(t) = {\bf  A} {\bf x}(t)+ {\bf b}(t), \\{\bf x}(t_{0})={\bf x}_{0}\nonumber \end{gather}
has a unique solution for every initial condition $t_{0}\in {\mathbb R}$ and ${\bf x}_{0}\in {\mathbb H}^{n\times 1}$ given by
\begin{equation}\label{eq:sol_right_in_non}
{\bf x}= e^{{\bf A}(t-t_{0})}{\bf x}_{0}+ e^{{\bf A}t} \int_{t_{0}}^{t}e^{-{\bf A}\tau}  {\bf b}(\tau){\rm d}\tau.
\end{equation}
\end{theorem}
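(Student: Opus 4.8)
The plan is to apply a left integrating factor, mirroring the proof for the homogeneous constant-coefficient system but carrying along the source ${\bf b}(t)$. First I would rewrite (\ref{eq:A_const_right}) as ${\bf x}'(t)-{\bf A}{\bf x}(t)={\bf b}(t)$ and left-multiply by $e^{-{\bf A}t}$. Since ${\bf A}$ is constant, differentiating the defining series (\ref{eq:exp_matr}) term by term gives $\frac{{\rm d}}{{\rm d}t}e^{-{\bf A}t}=-{\bf A}e^{-{\bf A}t}=-e^{-{\bf A}t}{\bf A}$, the last equality being the commutation property ${\bf A}^{k}e^{{\bf A}}=e^{{\bf A}}{\bf A}^{k}$ of the quaternionic matrix exponential. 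Hence, by the product rule for the derivative of a matrix-vector product,
\[
\left(e^{-{\bf A}t}{\bf x}(t)\right)'=e^{-{\bf A}t}{\bf x}'(t)-e^{-{\bf A}t}{\bf A}{\bf x}(t)=e^{-{\bf A}t}{\bf b}(t).
\]

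Next I would integrate this identity over $[t_{0},t]$; because integration of a quaternion-valued function is performed componentwise, the fundamental theorem of calculus applies and yields $e^{-{\bf A}t}{\bf x}(t)-e^{-{\bf A}t_{0}}{\bf x}_{0}=\int_{t_{0}}^{t}e^{-{\bf A}\tau}{\bf b}(\tau)\,{\rm d}\tau$. Left-multiplying by $e^{{\bf A}t}$, which is the inverse of $e^{-{\bf A}t}$, and using $e^{{\bf A}t}e^{-{\bf A}t_{0}}=e^{{\bf A}(t-t_{0})}$ (valid because ${\bf A}t$ and $-{\bf A}t_{0}$ commute) produces exactly (\ref{eq:sol_right_in_non}). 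This chain of equivalences already shows that every solution of (\ref{eq:A_const_right}) must coincide with (\ref{eq:sol_right_in_non}).

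It then remains to check that (\ref{eq:sol_right_in_non}) is genuinely a solution. Differentiating term by term, $\frac{{\rm d}}{{\rm d}t}e^{{\bf A}(t-t_{0})}{\bf x}_{0}={\bf A}e^{{\bf A}(t-t_{0})}{\bf x}_{0}$, while the product rule together with the fundamental theorem of calculus gives $\frac{{\rm d}}{{\rm d}t}\bigl(e^{{\bf A}t}\int_{t_{0}}^{t}e^{-{\bf A}\tau}{\bf b}(\tau)\,{\rm d}\tau\bigr)={\bf A}e^{{\bf A}t}\int_{t_{0}}^{t}e^{-{\bf A}\tau}{\bf b}(\tau)\,{\rm d}\tau+e^{{\bf A}t}e^{-{\bf A}t}{\bf b}(t)$, and $e^{{\bf A}t}e^{-{\bf A}t}={\bf I}$ collapses the last term to ${\bf b}(t)$; summing the two contributions gives ${\bf A}{\bf x}(t)+{\bf b}(t)$. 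Evaluating (\ref{eq:sol_right_in_non}) at $t=t_{0}$ gives ${\bf x}(t_{0})=e^{{\bf 0}}{\bf x}_{0}={\bf x}_{0}$. Uniqueness follows at once from Theorem \ref{th:uniq_init_sol}, since a constant ${\bf A}$ and a continuous ${\bf b}$ satisfy its hypotheses; it is in any case already built into the derivation above.

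The proof is essentially routine; the one point demanding attention is the non-commutativity bookkeeping: all of ${\bf A}$, $e^{{\bf A}t}$ and $e^{-{\bf A}t}$ must be kept on the left of the vector ${\bf x}$, the integrating factor must be applied from the left consistently throughout, and the commutation relations among ${\bf A}$ and its exponentials must be invoked exactly where they are used rather than having factors rearranged freely. Since all these commutations involve only powers of the single matrix ${\bf A}$, they hold unconditionally, so, in contrast with Proposition \ref{pr:lin_eq}, no analogue of the condition (\ref{eq:prop_com}) is needed here.
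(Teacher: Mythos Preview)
Your proof is correct and follows essentially the same integrating-factor approach as the paper: left-multiply by $e^{-{\bf A}t}$, recognize the left-hand side as a total derivative via the commutation ${\bf A}e^{-{\bf A}t}=e^{-{\bf A}t}{\bf A}$, integrate over $[t_{0},t]$, and invert the exponential. Your write-up is in fact slightly more thorough than the paper's, since you also explicitly verify the formula by differentiation, check the initial condition, and invoke Theorem~\ref{th:uniq_init_sol} for uniqueness.
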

\begin{proof}The proof is similar to the proof of the same theorem in the complex or real case.

Rewrite the given equation as ${\bf x}^{'} = {\bf  A} {\bf x}+ {\bf b}$, and multiply it on the left by $e^{-{\bf A}t} $,
 \[e^{-{\bf A}t}{\bf x}^{'} - e^{-{\bf A}t}{\bf  A} {\bf x}=e^{-{\bf A}t} {\bf b}\]
 Since  $e^{-{\bf A}t}{\bf A}= {\bf A}e^{-{\bf A}t}$ and using  the formulas for the derivative of an exponential and product, then we obtain
 \[\left(e^{-{\bf A}t}{\bf x}\right)^{'} =e^{-{\bf A}t} {\bf b}\]
Integrating on the interval $[t_{0}, t]$ the last equation above gives
\[
e^{-{\bf A}t}{\bf x}(t)-e^{-{\bf A}t_{0}}{\bf x}(t_{0})= \int_{t_{0}}^{t}e^{-{\bf A}\tau}  {\bf b}(\tau){\rm d}\tau
\]
By  reorder terms and using that $\left(e^{-{\bf A}t}\right)^{-1}=e^{{\bf A}t}$, we  have
\begin{equation*}
{\bf x}(t)= e^{{\bf A}t}e^{-{\bf A}t_{0}}{\bf x}_{0}+ e^{{\bf A}t} \int_{t_{0}}^{t}e^{-{\bf A}\tau}  {\bf b}(\tau){\rm d}\tau.
\end{equation*}
Taking in account $e^{{\bf A}t}e^{-{\bf A}t_{0}}=e^{{\bf A}(t-t_{0})}$, from this we finally obtain (\ref{eq:sol_right_in_non}).
\end{proof}
\begin{theorem}If ${\bf A}\in {\mathbb H}^{n\times n}$ is constant and the quaternionic row $n$-vector valued function ${\bf b}$
 is continuous, then the initial left value problem
\begin{gather}\label{eq:A_const_left}
{\bf x}^{'}(t) =  {\bf x}(t){\bf  A}+ {\bf b}(t), \\{\bf x}(t_{0})={\bf x}_{0}\nonumber \end{gather}
has a unique solution for every initial condition $t_{0}\in {\mathbb R}$ and ${\bf x}_{0}\in {\mathbb H}^{1\times n}$ given by
\begin{equation*}
{\bf x}= {\bf x}_{0}e^{{\bf A}(t-t_{0})}+  \int_{t_{0}}^{t}  {\bf b}(\tau)e^{-{\bf A}\tau}{\rm d}\tau\,e^{{\bf A}t}.
\end{equation*}
\end{theorem}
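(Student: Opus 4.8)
The plan is to mirror the proof of Theorem~\ref{tm:A_const_right}, replacing every \emph{left} multiplication by $e^{-{\bf A}t}$ with a \emph{right} multiplication, since now the unknown ${\bf x}(t)$ is a row-vector and the constant matrix ${\bf A}$ stands to its right. First I would rewrite (\ref{eq:A_const_left}) as ${\bf x}'(t)-{\bf x}(t){\bf A}={\bf b}(t)$ and multiply it on the right by $e^{-{\bf A}t}$, obtaining
\[
{\bf x}'(t)e^{-{\bf A}t}-{\bf x}(t){\bf A}e^{-{\bf A}t}={\bf b}(t)e^{-{\bf A}t}.
\]
Because ${\bf A}$ is constant it commutes with $e^{\pm{\bf A}t}$, and $\bigl[e^{-{\bf A}t}\bigr]'=-{\bf A}e^{-{\bf A}t}=-e^{-{\bf A}t}{\bf A}$ (here Proposition~\ref{prop:e_der}, or the exponential properties listed above, applies with no commutativity restriction, since the constant matrix trivially commutes with itself). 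Applying the product rule for matrix/vector-valued functions — which preserves the order of the factors — the left-hand side collapses to $\bigl({\bf x}(t)e^{-{\bf A}t}\bigr)'$, so that $\bigl({\bf x}(t)e^{-{\bf A}t}\bigr)'={\bf b}(t)e^{-{\bf A}t}$.

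Next I would integrate this identity over $[t_{0},t]$, giving
\[
{\bf x}(t)e^{-{\bf A}t}-{\bf x}(t_{0})e^{-{\bf A}t_{0}}=\int_{t_{0}}^{t}{\bf b}(\tau)e^{-{\bf A}\tau}{\rm d}\tau .
\]
Using $\bigl(e^{-{\bf A}t}\bigr)^{-1}=e^{{\bf A}t}$ and the semigroup property $e^{-{\bf A}t_{0}}e^{{\bf A}t}=e^{{\bf A}(t-t_{0})}$, together with the initial condition ${\bf x}(t_{0})={\bf x}_{0}$, one right-multiplies by $e^{{\bf A}t}$ and arrives at exactly the claimed formula
\[
{\bf x}(t)={\bf x}_{0}e^{{\bf A}(t-t_{0})}+\int_{t_{0}}^{t}{\bf b}(\tau)e^{-{\bf A}\tau}{\rm d}\tau\,e^{{\bf A}t}.
\]
A direct differentiation of this expression (again using the product rule and $\bigl[e^{{\bf A}t}\bigr]'={\bf A}e^{{\bf A}t}=e^{{\bf A}t}{\bf A}$) confirms that it solves (\ref{eq:A_const_left}), while uniqueness is inherited from Theorem~\ref{th:uniq_init_sol}.

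The main obstacle is a bookkeeping hazard rather than a deep difficulty: one must keep the multiplication order consistent throughout — every factor $e^{-{\bf A}t}$ sitting to the right of the row-vector, the product rule never swapping factors — and invoke the commutativity of the \emph{constant} matrix ${\bf A}$ with $e^{\pm{\bf A}t}$ at precisely the right moments. No issue with the noncommutativity of $\mathbb{H}$ arises here, because only one matrix and its exponential are involved; in particular the delicate condition (\ref{eq:prop_com}) that constrained the variable-coefficient case in Proposition~\ref{pr:lin_eq} is vacuous for constant ${\bf A}$.
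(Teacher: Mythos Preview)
Your proposal is correct and is exactly what the paper intends: its own proof reads in full ``The proof is similar to the proof of Theorem~\ref{tm:A_const_right},'' and you have carried out precisely that mirroring, replacing left-multiplication by $e^{-{\bf A}t}$ with right-multiplication and keeping the factor order consistent for the row-vector unknown. Nothing further is needed.
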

\begin{proof}The proof is similar to the proof of Theorem \ref{tm:A_const_right}.
\end{proof}
\begin{remark}The general solutions of  the  equations (\ref{eq:A_const_right}) and (\ref{eq:A_const_left}) are,
 respectively,
\begin{gather}\label{eq:gen_right_const}
{\bf x}(t)=e^{{\bf A}t} \int e^{-{\bf A}t}  {\bf b}(t){\rm d}t,\\
{\bf x}(t)=\int {\bf b}(t)e^{-{\bf A}t}  {\rm d}t\,e^{{\bf A}t}.\nonumber
\end{gather}
\end{remark}
While there is no simple algorithm to directly calculate eigenvalues for general matrices, there are numerous special classes of matrices where eigenvalues can be directly calculated.
\begin{ex}
Consider the right  linear system
\begin{equation}\label{eq:eq2}
{\bf x}^{'}(t) =  {\bf  A}{\bf x}(t)+ {\bf b}(t),
\end{equation}
where the coefficient matrix ${\bf  A}$ from Example \ref{ex1}, i.e.
\[{\bf A}=\begin{bmatrix}1- 2.5i -0.5j+k & 4+3j+2.5k & 2- 2i -j-2.5k\\
 1.5-i -j-0.5k & 2+1.5i -3j+3k & 2+2.5i+j-k\\
                                                      0.5-i +j-0.5k & 3-i -0.5j&  1+i -1.5j-2k
\end{bmatrix},\]
and the source column-vector \[{\bf b}(t)=\begin{bmatrix}it\\-kt\\jt\end{bmatrix}.\]
By Example \ref{ex1}, there exists the invertible matrix
\[{\bf V}=\begin{bmatrix}1-0.5i+j-0.5k & j & 1+0.5i-j-0.5k\\
 i & k & i\\
                                                     -0.5+0.5i-0.5j+0.5k & 1&  -0.5+0.5i-0.5j-0.5k
\end{bmatrix},\]
such that ${\bf A}={\bf V}{\bf D}{\bf V}^{-1}$, where ${\bf D}={\rm diag}[1+i,i,3+i]$ and
\[{\bf V}^{-1}=\begin{bmatrix}-0.25+0.25i-0.25j+0.25k & -0.25-0.5i+0.75j & -0.25 -0.5i-0.25j\\
0.5i-0.5j &  -1.5 & 0.5+k\\
-0.25-0.25i+0.25j+0.25k & 0.25-0.5i+0.75j &  0.25-0.5i-0.25j
\end{bmatrix}.\]
We shall find the general solution of (\ref{eq:eq2}) by (\ref{eq:gen_right_const}). Since,
$
e^{-{\bf A}t}={\bf V}e^{-{\bf D}t}{\bf V}^{-1},
$
where
\[e^{{-\bf D}t}=\begin{bmatrix}e^{(-1-i)t} & 0 & 0\\
0 &  e^{-it} & 0\\
0 & 0 & e^{(-3-i)t}
\end{bmatrix},\]
then $${\bf x}(t)={\bf V}e^{{\bf D}t}{\bf V}^{-1}\int {\bf V}e^{-{\bf D}t}{\bf V}^{-1}  {\bf b}(t){\rm d}t={\bf V}e^{{\bf D}t}\int e^{-{\bf D}t}{\bf V}^{-1}  {\bf b}(t){\rm d}t.$$
Further,
$$
e^{{-\bf D}t}{\bf V}^{-1}{\bf b}(t)=e^{{-\bf D}t}\begin{bmatrix}(-i-0.5j)t \\
(-0.5-i+0.5j+2k)t\\
(0.5-i-k)t
\end{bmatrix}=\begin{bmatrix}e^{(-1-i)t}(-i-0.5j)t \\
 e^{-it}(-0.5-i+0.5j+2k)t\\
 e^{(-3-i)t}(0.5-i-k)t
\end{bmatrix},
$$ and by Proposition \ref{prop:e}
\begin{multline*}
\int e^{{-\bf D}t}{\bf V}^{-1}{\bf b}(t){\rm d}t=\begin{bmatrix}\int e^{(-1-i)t}t{\rm d}t (-i-0.5j) \\
\int e^{-it}t{\rm d}t (-0.5-i+0.5j+2k)\\
\int e^{(-3-i)t}t{\rm d}t (0.5-i-k)
\end{bmatrix}=\\\begin{bmatrix} (e^{(-1-i)t}\,(t(-0.5+0.5i)+0.5i)+g_{1}) (-i-0.5j) \\
 (e^{-it}\,\,(t i+1) +g_{2})(-0.5-i+0.5j+2k)\\
 (e^{(-3-i)t}\,(t(-0.3+0.1i)-0.08+0.06i))+g_{3}) (0.5-i-k)
\end{bmatrix}=\\\begin{bmatrix} e^{(-1-i)t}\,(t(0.5+0.5i+0.25j-0.25k)+0.5-0.25k)+g_{1} (-i-0.5j) \\
 e^{-it}\,\,(t (1-0.5i-2j+0.5k) -0.5-i+0.5j+2k)+g_{2}(-0.5-i+0.5j+2k)\\
 e^{(-3-i)t}\,(t(-0.05+0.35i+0.1j+0.3k)+0.02+0.11i+0.06j+0.08k)+g_{3} (0.5-i-k)
\end{bmatrix},
\end{multline*}
where $g_{n}\in {\mathbb H}$ is arbitrary for all $n=1,2,3.$

By the direct matrix multiplication, we have
\[{\bf V}e^{{\bf D}t}=\begin{bmatrix}(1-0.5i+j-0.5k)e^{(1+i)t} & j e^{it} & (1+0.5i-j-0.5k)e^{(3+i)t}\\
 ie^{(1+i)t} & k e^{it}  & ie^{(3+i)t}\\
                                                    ( -0.5+0.5i-0.5j+0.5k)e^{(1+i)t} &  e^{it} &  (-0.5+0.5i-0.5j-0.5k)e^{(3+i)t}
\end{bmatrix}.\]
If we put $g_{n}=0$ for all $n=1,2,3$, then, finally,  we obtain the following partial solution of (\ref{eq:eq2})
$$
{\bf x}(t)=\begin{bmatrix}2.4+0.7i+1.2j+0.1k \\
-1.35+2.45i-0.55j+1.35k\\
0.75-0.45i-2.25j+1.65k
\end{bmatrix}t +\begin{bmatrix}-0.06+1.57i-0.18j+0.71k \\
-2.11+0.02i-0.83j-0.44k\\
-0.6-0.57i+0.3j+2.49k
\end{bmatrix}.
$$
The correctness of the result can easily be verified by substituting it in (\ref{eq:eq2}).
\end{ex}
 \subsection{Determinantal representations of  solutions of right and left linear systems with constant coefficient matrices and sources vectors}
 \subsubsection{The case  with invertible coefficient matrices}
If ${\bf A}$ is invertible, then
 \begin{equation*}\int e^{- {\bf A}t}dt=-{\bf A}^{-1}e^{- {\bf A}t}+{\bf G},
\end{equation*}
where ${\bf G}$ is an arbitrary $n\times n$ matrix.
Then for the right nonhomogeneous system,
$
{\bf x}^{'}(t) = {\bf  A} {\bf x}(t)+ {\bf b},
$
we have the following general solution and solution of the right initial problem, respectively,
\begin{gather*}
{\bf x}(t)=e^{{\bf A}t} \int e^{-{\bf A}t}  {\rm d}t\,{\bf b}= -{\bf A}^{-1}{\bf b}+e^{{\bf A}t}{\bf G}{\bf b},\\
{\bf x}(t)=-{\bf A}^{-1}{\bf b}+e^{{\bf A}(t-t_{0})}{\bf x_{0}}{\bf b}.
\end{gather*}
For the left nonhomogeneous system,
$
{\bf x}^{'}(t) =  {\bf x}(t){\bf  A}+ {\bf b},
$
we evidently obtain the following general solution and solution of the right initial problem, respectively,
\begin{gather*}
{\bf x}(t)={\bf b}\int e^{-{\bf A}t}  {\rm d}t\,e^{{\bf A}t}= -{\bf b}{\bf A}^{-1}+{\bf b}{\bf G}e^{{\bf A}t},\\
{\bf x}(t)=-{\bf b}{\bf A}^{-1}+{\bf b}{\bf x}_{0}e^{{\bf A}(t-t_{0})}.
\end{gather*}
If ${\bf G}\equiv {\bf 0}$ or (that is equivalent) ${\bf x}_{0}\equiv {\bf 0}$, then   the partial solution of the right nonhomogeneous system, ${\bf x}(t)=-{\bf A}^{-1}{\bf b}$, ${\bf x}=\begin{bmatrix}x_{1}\\\vdots\\ x_{n}\end{bmatrix}$, due to Theorem \ref{theorem:right_system}, possess the following determinantal representations for all $i=1,\ldots,n$:
\begin{itemize}
  \item[(i)]  $x_{i}=-\frac{{\rm cdet}_{i}{\bf A}_{.i}({\bf b})}{\det{\bf A}}$  when ${\bf A}$ is Hermitian;
  \item[(ii)]  $x_{i}=-\frac{{\rm cdet}_{i}({\bf A}^{*}{\bf A})_{.i}({\bf b})}{{\rm ddet}{\bf A}}$ when ${\bf A}$ is  arbitrary.
\end{itemize}
Similarly, if ${\bf G}$ or  ${\bf x}_{0}$ is the zero matrix or the zero row-vector, respectively, then   the partial solution of the left nonhomogeneous system, ${\bf x}(t)=-{\bf b}{\bf A}^{-1}$, ${\bf x}=\begin{bmatrix}x_{1},\ldots, x_{n}\end{bmatrix}$, due to Theorem \ref{theorem:left_system}, possess the following determinantal representations for all $i=1,\ldots,n$:
\begin{itemize}
  \item[(i)]  $x_{i}=-\frac{{\rm rdet}_{i}{\bf A}_{i.}({\bf b})}{\det{\bf A}}$ when ${\bf A}$ is Hermitian;
  \item[(ii)]  $x_{i}=-\frac{{\rm rdet}_{i}({\bf A}{\bf A}^{*})_{i.}({\bf b})}{{\rm ddet}{\bf A}}$ when ${\bf A}$ is arbitrary.
\end{itemize}
 \subsubsection{The case  with non-invertible coefficient matrices}
If ${\bf A}$ is non-invertible, then due to \cite{ca1} the following theorem  can be expended to quaternion matrices.
 \begin{theorem} If ${\bf A}\in{\rm {\mathbb{H}}}^{n \times n}$ has index $k$, then
 \begin{multline}\label{eq:int_exp_sing}
 \int e^{ -{\bf A}t}{\rm d}t=\\-{\bf A}^{D}e^{- {\bf A}t}+({\bf I}-{\bf A}{\bf A}^{D})t\left[{\bf I}-\frac{{\bf A}}{2}t+\frac{{\bf A}^{2}}{3!}t^{2}+...+\frac{(-1)^{k-1}{\bf A}^{k-1}}{k!}t^{k-1}\right]+{\bf G}.  \end{multline}
\end{theorem}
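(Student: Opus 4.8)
The plan is to verify the claimed formula by differentiating its right-hand side, rather than by integrating the exponential series term by term, so that no delicate justification of term-by-term integration is needed. The computation is driven by two algebraic facts about the Drazin inverse. Since ${\bf A}$ and ${\bf A}^{D}$ commute and ${\bf A}^{k+1}{\bf A}^{D}={\bf A}^{k}$, multiplying the last relation on the left by powers of ${\bf A}$ gives ${\bf A}^{m+1}{\bf A}^{D}={\bf A}^{m}$ --- equivalently ${\bf A}^{D}{\bf A}^{m+1}={\bf A}^{m}$ --- for every $m\ge k$, and hence
\[
({\bf I}-{\bf A}{\bf A}^{D}){\bf A}^{m}={\bf A}^{m}-{\bf A}^{D}{\bf A}^{m+1}={\bf 0}\qquad(m\ge k).
\]
Multiplying the absolutely convergent series of the matrix exponential by the idempotent ${\bf I}-{\bf A}{\bf A}^{D}$ and discarding the terms that vanish, we obtain the key identity
\[
({\bf I}-{\bf A}{\bf A}^{D})\,e^{-{\bf A}t}=({\bf I}-{\bf A}{\bf A}^{D})\sum_{m=0}^{k-1}\frac{(-{\bf A}t)^{m}}{m!},
\]
so the degree-$(k-1)$ truncation of $e^{-{\bf A}t}$ is exact after projection by ${\bf I}-{\bf A}{\bf A}^{D}$.

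Next I would differentiate
\[
{\bf F}(t):=-{\bf A}^{D}e^{-{\bf A}t}+({\bf I}-{\bf A}{\bf A}^{D})\sum_{m=0}^{k-1}\frac{(-1)^{m}{\bf A}^{m}}{(m+1)!}\,t^{m+1}+{\bf G}.
\]
Using that $t\mapsto e^{-{\bf A}t}$ is differentiable with $\bigl(e^{-{\bf A}t}\bigr)'=-{\bf A}e^{-{\bf A}t}=-e^{-{\bf A}t}{\bf A}$ --- which follows from Definition \ref{def:exp_matr}, the absolute-convergence estimate recorded immediately after it, and properties (iv)--(vi) of the quaternionic matrix exponential, so that ${\bf A}$, ${\bf A}^{D}$ and $e^{-{\bf A}t}$ commute pairwise --- termwise differentiation of the finite sum yields
\[
{\bf F}'(t)={\bf A}{\bf A}^{D}e^{-{\bf A}t}+({\bf I}-{\bf A}{\bf A}^{D})\sum_{m=0}^{k-1}\frac{(-{\bf A}t)^{m}}{m!}.
\]
By the key identity the second summand equals $({\bf I}-{\bf A}{\bf A}^{D})e^{-{\bf A}t}$, so
\[
{\bf F}'(t)={\bf A}{\bf A}^{D}e^{-{\bf A}t}+({\bf I}-{\bf A}{\bf A}^{D})e^{-{\bf A}t}=e^{-{\bf A}t},
\]
which is exactly what must be shown; the free additive constant ${\bf G}$ records the indeterminacy of the antiderivative.

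Alternatively, one can integrate the series directly: $\int e^{-{\bf A}t}\,{\rm d}t=\sum_{m\ge 0}\frac{(-1)^{m}{\bf A}^{m}}{(m+1)!}t^{m+1}+{\bf C}$, substitute ${\bf I}={\bf A}{\bf A}^{D}+({\bf I}-{\bf A}{\bf A}^{D})$ into each term, reindex the ${\bf A}{\bf A}^{D}$-part by $j=m+1$ to recognise it as $-{\bf A}^{D}\bigl(e^{-{\bf A}t}-{\bf I}\bigr)$, and truncate the $({\bf I}-{\bf A}{\bf A}^{D})$-part at $m=k-1$; absorbing the leftover constant ${\bf A}^{D}$ into ${\bf G}:={\bf C}+{\bf A}^{D}$ reproduces the statement. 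Since every matrix occurring in the argument --- ${\bf A}$, ${\bf A}^{D}$, the matrix polynomials in ${\bf A}$, and $e^{-{\bf A}t}$ --- commutes with every other, the noncommutativity of $\mathbb{H}$ plays no role, and the whole proof is a formal transcription of the complex case \cite{ca1}; the only genuinely quaternionic ingredients are the exponential calculus over $\mathbb{H}$ and the Drazin-inverse theory set up earlier. Accordingly, the one point deserving care --- the main, though very mild, obstacle --- is to verify the Drazin identities ${\bf A}^{D}{\bf A}^{m+1}={\bf A}^{m}$ and $({\bf I}-{\bf A}{\bf A}^{D}){\bf A}^{m}={\bf 0}$ for $m\ge k$, since it is precisely these that make the truncation at degree $k-1$ exact.
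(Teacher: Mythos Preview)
Your proof is correct and follows essentially the same approach as the paper: both differentiate the proposed antiderivative and verify that the result is $e^{-{\bf A}t}$, using ${\bf A}^{D}{\bf A}={\bf A}{\bf A}^{D}$ together with the vanishing $({\bf I}-{\bf A}{\bf A}^{D}){\bf A}^{m}={\bf 0}$ for $m\ge k$ to see that the truncated polynomial reproduces $({\bf I}-{\bf A}{\bf A}^{D})e^{-{\bf A}t}$. Your write-up is in fact more explicit than the paper's about why the truncation at degree $k-1$ is exact, isolating the key identity $({\bf I}-{\bf A}{\bf A}^{D})e^{-{\bf A}t}=({\bf I}-{\bf A}{\bf A}^{D})\sum_{m=0}^{k-1}\frac{(-{\bf A}t)^{m}}{m!}$ as a separate lemma rather than leaving it implicit in the final cancellation.
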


\begin{proof} Differentiating Eq. (\ref{eq:int_exp_sing}) and using the series expansion
for $e^{ -{\bf A}t}$, we obtain
\begin{multline}\label{pr:int_exp}
e^{ -{\bf A}t}={\bf A}^{D}{\bf A}\left({\bf I}-{\bf A}t+\frac{{\bf A}^{2}}{2}t^{2}+\ldots+\frac{(-1)^{k}{\bf A}^{k}}{k!}t^{k}+\ldots\right)+({\bf I}-{\bf A}{\bf A}^{D})-\\({\bf I}-{\bf A}{\bf A}^{D}){\bf A}t+({\bf I}-{\bf A}{\bf A}^{D})\frac{{\bf A}^{2}}{2}t^{2}+\ldots+({\bf I}-{\bf A}{\bf A}^{D})\frac{(-1)^{k-1}{\bf A}^{k-1}}{(k-1)!}t^{k-1}.
\end{multline}
Since ${\bf A}^{D}{\bf A}={\bf A}{\bf A}^{D}$, from (\ref{pr:int_exp}) it follows the identity,
\begin{multline*}e^{ -{\bf A}t}={\bf A}^{D}{\bf A}\left({\bf I}-{\bf A}t+\frac{{\bf A}^{2}}{2}t^{2}+\ldots+\frac{(-1)^{k}{\bf A}^{k}}{k!}t^{k}+\ldots\right)-\\{\bf A}^{D}{\bf A}\left({\bf I}-{\bf A}t+\frac{{\bf A}^{2}}{2}t^{2}+\ldots+\frac{(-1)^{k}{\bf A}^{k}}{k!}t^{k}+\ldots\right)+\\
{\bf I}-{\bf A}t+\frac{{\bf A}^{2}}{2}t^{2}+\ldots+\frac{(-1)^{k}{\bf A}^{k}}{k!}t^{k}+\ldots
\end{multline*}
It completes the proof.
\end{proof}
\begin{remark}Note that from (\ref{pr:int_exp}), we have the following identity,
\begin{equation*}
e^{ -{\bf A}t}({\bf I}-{\bf A}{\bf A}^{D})=({\bf I}-{\bf A}{\bf A}^{D})\left[{\bf I}-{\bf A}t+\frac{{\bf A}^{2}}{2!}t^{2}+\cdots+\frac{(-1)^{k-1}{\bf A}^{k-1}}{(k-1)!}t^{k-1}\right].
\end{equation*}
Similarly, we have
\begin{equation}\label{eq:eA}
e^{ {\bf A}t}({\bf I}-{\bf A}{\bf A}^{D})=({\bf I}-{\bf A}{\bf A}^{D})\left[{\bf I}+{\bf A}t+\frac{{\bf A}^{2}}{2!}t^{2}+\cdots+\frac{{\bf A}^{k-1}}{(k-1)!}t^{k-1}\right].
\end{equation}
\end{remark}

Consider the right nonhomogeneous system
\begin{equation*}
{\bf x}^{'}(t) = {\bf  A} {\bf x}(t)+ {\bf b},
\end{equation*}
where ${\bf A}\in {\mathbb{H}}^{n \times n}$ is singular and $Ind\,{\bf A}=k$.
Due to Eqs. (\ref{eq:int_exp_sing}) and (\ref{eq:eA}), we have the following general solution and solution of the right initial problem, respectively,
\begin{gather}\label{eq:right_gen_dr}
{\bf x}(t)= \left\{-{\bf A}^{D}+ ({\bf I}-{\bf A}{\bf A}^{D})t \left[{\bf I}+{\bf A}t+\frac{{\bf A}^{2}}{2!}t^{2}+\cdots+\frac{{\bf A}^{k-1}}{(k-1)!}t^{k-1}\right]+e^{{\bf A}t}{\bf G}\right\}{\bf b},\\
{\bf x}(t)=\nonumber\\ \left\{-{\bf A}^{D}+ ({\bf I}-{\bf A}{\bf A}^{D})t \left[{\bf I}+{\bf A}t+\frac{{\bf A}^{2}}{2!}t^{2}+\cdots+\frac{{\bf A}^{k-1}}{(k-1)!}t^{k-1}\right]+e^{{\bf A}(t-t_{0})}{\bf x_{0}}\right\}{\bf b}.\nonumber
\end{gather}

If we put ${ \bf G}={\bf 0}$, then   the following  partial solution of (\ref{eq:right_gen_dr}) is obtained,
 \begin{equation}\label{eq:right_dif_part_sol}
                                               {\bf X}(t)=-{\bf A}^{D}{\bf b}+({\bf b}-{\bf A}^{D}{\bf A}{\bf b})t+\frac{1}{2}({\bf A}{\bf b}-{\bf A}^{D}{\bf A}^{2}{\bf b})t^{2}+...
                                                \frac{1}{k!}({\bf A}^{k-1}{\bf b}-{\bf A}^{D}{\bf A}^{k}{\bf b})t^{k}.
\end{equation}
\begin{theorem}\label{th:right_dif_part_sol}If ${\bf A}\in{\rm {\mathbb{H}}}^{n \times n}$  has index $k$ and $\rank{\rm {\bf
A}}^{k + 1} = \rank{\rm {\bf A}}^{k}=r \le n$,  then the partial solution (\ref{eq:right_dif_part_sol}), ${\bf x}(t)=(x_{i}(t))$, possess the following determinantal representation,
\begin{itemize}
  \item[(i)] when  ${\bf A}\in{\rm {\mathbb{H}}}^{n \times n}$ is Hermitian,
\begin{multline*}
  x_{i}=
  -{\frac{{{\sum\limits_{\beta \in
J_{r,\,n} {\left\{ {i} \right\}}} {{{\rm cdet}_{i} \left( {{\rm
{\bf A}}^{k+1}_{\,.\,i} \left( {\widehat{
{\bf b}}^{(k)}} \right)} \right) {\kern 1pt} _{\beta}
^{\beta} }} } }}{{{\sum\limits_{\beta \in J_{r,\,\,n}}
{{\left| {\left( {{\rm {\bf A}}^{ k+1} } \right){\kern
1pt}  _{\beta} ^{\beta} }  \right|}}} }}}+\\
 \left ({b_{i}- {\frac{{{\sum\limits_{\beta \in
J_{r,\,n} {\left\{ {i} \right\}}} {{{\rm cdet}_{i} \left( {{\rm
{\bf A}}^{k+1}_{\,.\,i} \left( {\widehat{{\rm
{\bf b}}}^{(k+1)}} \right)} \right) {\kern 1pt} _{\beta}
^{\beta} }} } }}{{{\sum\limits_{\beta \in J_{r,\,n}}
{{\left| {\left( {{\rm {\bf A}}^{k+1} } \right){\kern
1pt} _{\beta} ^{\beta} }  \right|}}} }}}} \right)t+\end{multline*}\begin{multline}\label{eq:her_left_dif_repr}
\frac{1}{2} \left ({\widehat{b}^{(1)}_{i}- {\frac{{{\sum\limits_{\beta \in
J_{r,\,n} {\left\{ {i} \right\}}} {{{\rm cdet}_{i} \left( {{\rm
{\bf A}}^{k+1}_{\,.\,i} \left( {\widehat{{\rm
{\bf b}}}^{(k+2)}} \right)} \right) {\kern 1pt} _{\beta}
^{\beta} }} } }}{{{\sum\limits_{\beta \in J_{r,\,\,n}}
{{\left| {\left( {{\rm {\bf A}}^{k+1} } \right) {\kern 1pt} _{\beta} ^{\beta} }  \right|}}} }}}} \right)t^{2}+\ldots\\
+\frac{1}{k!}
\left ({\widehat{b}^{(k-1)}_{i}- {\frac{{{\sum\limits_{\beta \in
J_{r,\,n} {\left\{ {i} \right\}}} {{{\rm cdet}_{i} \left( {{\rm
{\bf A}}^{k+1}_{\,.\,i} \left( {\widehat{{\rm
{\bf b}}}^{(2k)}} \right)} \right) {\kern 1pt} _{\beta}
^{\beta} }} } }}{{{\sum\limits_{\beta \in J_{r,\,\,n}}
{{\left| {\left( {{\rm {\bf A}}^{k+1} } \right){\kern
1pt} _{\beta} ^{\beta} }  \right|}}} }}}} \right)t^{k}
\end{multline}
where ${\rm {\bf A}}^{l}{\rm {\bf b}}=:\widehat{{{\rm
{\bf b}}}}^{(l)}
= (\widehat{b}^{(l)}_{i})\in {\mathbb{H}}^{n\times 1}$ for all $l= {k,\ldots,2k}$;

  \item[(ii)]when ${\bf A}$ is arbitrary,

 \begin{multline}\label{eq:arb_left_dif_repr}
   x_{i}=
 -{\frac{{ \sum\limits_{s = 1}^{n} {a}_{is}^{(k)}   {\sum\limits_{\beta \in J_{r,\,n} {\left\{ {s}
\right\}}} {{\rm{cdet}} _{s} \left( {\left({\bf A}^{ 2k+1} \right)^{*}\left({\bf A}^{ 2k+1} \right)_{. \,s} \left( {\widehat{{\rm
{\bf d}}}^{(0)}} \right)} \right){\kern 1pt}  _{\beta} ^{\beta} } }
}}{{{\sum\limits_{\beta \in J_{r,\,n}} {{\left| {\left({\bf A}^{ 2k+1} \right)^{*}\left({\bf A}^{ 2k+1} \right){\kern 1pt} _{\beta} ^{\beta}
}  \right|}}} }}}+\\
 \left ({b_{i}- {\frac{{\sum\limits_{s = 1}^{n} {a}_{is}^{(k)}{\sum\limits_{\beta \in
J_{r,\,n} {\left\{ {s} \right\}}} {{{\rm cdet}_{s} \left({\left({\bf A}^{ 2k+1} \right)^{*}\left({\bf A}^{ 2k+1} \right)_{. \,s} \left( {\widehat{{\rm
{\bf d}}}^{(1)}} \right)} \right) {\kern 1pt} _{\beta}
^{\beta} }} } }}{{{\sum\limits_{\beta \in J_{r,\,n}}
{{\left| {\left({\bf A}^{ 2k+1} \right)^{*}\left({\bf A}^{ 2k+1} \right){\kern 1pt} _{\beta} ^{\beta}
}  \right|}}} }}}} \right)t+\\
\frac{1}{2} \left ({\widehat{b}^{(1)}_{i}- {\frac{{\sum\limits_{s = 1}^{n} {a}_{is}^{(k)}{\sum\limits_{\beta \in
J_{r,\,n} {\left\{ {s} \right\}}} {{{\rm cdet}_{s} \left( {\left({\bf A}^{ 2k+1} \right)^{*}\left({\bf A}^{ 2k+1} \right)_{. \,s} \left( {\widehat{{\rm
{\bf d}}}^{(2)}} \right)} \right) {\kern 1pt} _{\beta}
^{\beta} }} } }}{{{\sum\limits_{\beta \in J_{r,\,n}}
{{\left| {\left({\bf A}^{ 2k+1} \right)^{*}\left({\bf A}^{ 2k+1} \right){\kern 1pt} _{\beta} ^{\beta}
} \right|}}} }}}} \right)t^{2}+\ldots+\\\frac{1}{k!}
\left ({\widehat{b}^{(k-1)}_{i}- {\frac{{\sum\limits_{s = 1}^{n} {a}_{is}^{(k)}{\sum\limits_{\beta \in
J_{r,\,n} {\left\{ {s} \right\}}} {{{\rm cdet}_{s} \left( {\left({\bf A}^{ 2k+1} \right)^{*}\left({\bf A}^{ 2k+1} \right)_{. \,s} \left( {\widehat{{\rm
{\bf d}}}^{(k)}} \right)} \right) {\kern 1pt} _{\beta}
^{\beta} }} } }}{{{\sum\limits_{\beta \in J_{r,\,n}}
{{\left| {\left({\bf A}^{ 2k+1} \right)^{*}\left({\bf A}^{ 2k+1} \right){\kern 1pt} _{\beta} ^{\beta}
}  \right|}}} }}}} \right)t^{k}
\end{multline}
where $({\bf A}^{ 2k+1})^{*} {\bf A}^{k+l}{\bf b}=:
\widehat{{
{\bf d}}}^{(l)}
= (\widehat{d}^{(l)}_{i})\in {\mathbb{H}}^{n\times 1}$ for all $l=
{0,\ldots,k}$ and
for all $i=1,\ldots,n$.
\end{itemize}
\end{theorem}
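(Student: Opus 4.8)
The plan is to take the closed polynomial form (\ref{eq:right_dif_part_sol}) of the partial solution as given and to rewrite, term by term, the Drazin-inverse--vector products occurring in it, namely ${\bf A}^{D}{\bf b}$ and ${\bf A}^{D}{\bf A}^{l}{\bf b}$ for $l=1,\ldots,k$. Read off in the $i$th coordinate, (\ref{eq:right_dif_part_sol}) says that the constant term of $x_{i}(t)$ is $-({\bf A}^{D}{\bf b})_{i}$ and that the coefficient of $t^{l}$ is $\frac{1}{l!}(\widehat{b}^{(l-1)}_{i}-({\bf A}^{D}{\bf A}^{l}{\bf b})_{i})$, where $\widehat{b}^{(l-1)}_{i}$ is the $i$th entry of ${\bf A}^{l-1}{\bf b}$ and $\widehat{b}^{(0)}_{i}=b_{i}$. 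Hence it is enough to produce, once, a determinantal formula for $({\bf A}^{D}{\bf c})_{i}=\sum_{j}a^{D}_{ij}c_{j}$ with an arbitrary constant column-vector ${\bf c}=(c_{j})$, and then to specialize ${\bf c}={\bf b}$ and ${\bf c}={\bf A}^{l}{\bf b}$.

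For part (i), with ${\bf A}$ Hermitian, I would substitute the representation (\ref{eq:dr_rep_cdet}) of Theorem \ref{theor:det_rep_draz_her} for $a^{D}_{ij}$. In each summand ${\rm cdet}_{i}(({\bf A}^{k+1})_{.\,i}({\bf a}^{(k)}_{.\,j}))_{\beta}^{\beta}$ the single entry coming from the replaced $i$th column sits at the rightmost position of its cyclic factor, so ${\rm cdet}_{i}$ is additive and right-homogeneous in that column; this operation also commutes with passing to the principal submatrix $(\cdot)_{\beta}^{\beta}$ since $i\in\beta$, and the denominator $\sum_{\beta\in J_{r,n}}|({\bf A}^{k+1})_{\beta}^{\beta}|$ is real, hence central. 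Therefore $\sum_{j}{\rm cdet}_{i}(({\bf A}^{k+1})_{.\,i}({\bf a}^{(k)}_{.\,j}))_{\beta}^{\beta}c_{j}={\rm cdet}_{i}(({\bf A}^{k+1})_{.\,i}(\sum_{j}{\bf a}^{(k)}_{.\,j}c_{j}))_{\beta}^{\beta}={\rm cdet}_{i}(({\bf A}^{k+1})_{.\,i}({\bf A}^{k}{\bf c}))_{\beta}^{\beta}$, using $\sum_{j}{\bf a}^{(k)}_{.\,j}c_{j}={\bf A}^{k}{\bf c}$. Setting ${\bf c}={\bf b}$ inserts ${\bf A}^{k}{\bf b}=\widehat{{\bf b}}^{(k)}$ into the constant term, while ${\bf c}={\bf A}^{l}{\bf b}$ inserts ${\bf A}^{k}{\bf A}^{l}{\bf b}={\bf A}^{k+l}{\bf b}=\widehat{{\bf b}}^{(k+l)}$ into the coefficient of $t^{l}$; after summing over $\beta\in J_{r,n}\{i\}$ and dividing, this is exactly (\ref{eq:her_left_dif_repr}).

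Part (ii), with ${\bf A}$ arbitrary, is structurally the same, now starting from (\ref{eq:cdet_draz}) in Theorem \ref{theor:det_rep_draz}, where $a^{D}_{ij}$ is built from ${\rm cdet}_{t}(({\bf A}^{2k+1})^{*}({\bf A}^{2k+1})_{.\,t}(\widehat{{\bf a}}_{.\,j}))_{\beta}^{\beta}$ with $\widehat{{\bf a}}_{.\,j}$ the $j$th column of $\widehat{{\bf A}}=({\bf A}^{2k+1})^{*}{\bf A}^{k}$. Since the scalars $a^{(k)}_{it}$ are independent of $j$, the same right-linearity of ${\rm cdet}_{t}$ in its replaced column together with $\sum_{j}\widehat{{\bf a}}_{.\,j}c_{j}=\widehat{{\bf A}}{\bf c}=({\bf A}^{2k+1})^{*}{\bf A}^{k}{\bf c}$ turns $({\bf A}^{D}{\bf c})_{i}$ into the quotient whose replaced column is $({\bf A}^{2k+1})^{*}{\bf A}^{k}{\bf c}$. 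With ${\bf c}={\bf A}^{l}{\bf b}$ this column is $({\bf A}^{2k+1})^{*}{\bf A}^{k+l}{\bf b}=\widehat{{\bf d}}^{(l)}$ (and $\widehat{{\bf d}}^{(0)}$ for ${\bf c}={\bf b}$), and substituting into the constant term and into $\frac{1}{l!}(\widehat{b}^{(l-1)}_{i}-({\bf A}^{D}{\bf A}^{l}{\bf b})_{i})$ yields (\ref{eq:arb_left_dif_repr}).

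The only genuinely delicate point is justifying the interchange of the finite $j$-summation with the column determinant and with the principal-submatrix restriction; I would record the right-linearity of ${\rm cdet}$ in a single replaced column as a short preliminary observation, after which both parts follow from the identical computation specialized to ${\bf c}={\bf b}$ and ${\bf c}={\bf A}^{l}{\bf b}$. The remainder is bookkeeping with powers of ${\bf A}$ and with the index families $J_{r,n}$ and $J_{r,n}\{i\}$, and the resulting polynomial can be differentiated directly as a check that it solves (\ref{eq:A_const_right}).
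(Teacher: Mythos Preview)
Your proposal is correct and follows essentially the same approach as the paper: both start from the polynomial form (\ref{eq:right_dif_part_sol}), substitute the Drazin-inverse representations (\ref{eq:dr_rep_cdet}) and (\ref{eq:cdet_draz}) for the entries $a^{D}_{ij}$, and collapse the resulting $j$-sum via right-linearity of ${\rm cdet}$ in its replaced column, so that the inserted column becomes ${\bf A}^{k}{\bf c}$ (respectively $({\bf A}^{2k+1})^{*}{\bf A}^{k}{\bf c}$). Your explicit remark that the right-linearity of ${\rm cdet}_{i}$ in the replaced column is the one nontrivial point is a useful clarification, but the underlying computation is the same as the paper's.
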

\begin{proof}\begin{itemize}
               \item[(i)]Using the determinantal representation of  ${\bf A}^{D}$ by  (\ref{eq:dr_rep_cdet}), we obtain the following determinantal representation of  ${\bf A}^{D}{\bf A}^{m}{\bf b}:=(y_{i})$,
\begin{multline*}
y_{i}=\sum\limits_{s = 1}^{n}a^{D}_{is}\sum\limits_{t = 1}^{n}a^{(m)}_{st}b_{tj}=
{\sum\limits_{\beta \in J_{r,n} {\left\{ {i} \right\}}}}\frac{{ \sum\limits_{s = 1}^{n}{{{\rm cdet}_{i}
{\left( {{\rm {\bf A}}_{.\,i}^{k+1} \left( {{\rm {\bf
a}}_{.s}}^{(k)} \right)} \right)_{\beta} ^{\beta} } }}}\cdot \sum\limits_{t = 1}^{n}a^{(m)}_{st}b_{tj}}{{\sum\limits_{\beta \in J_{r,n} } {{\left|
{\left( {{\rm {\bf A}}^{k+1} } \right)_{\beta} ^{\beta} } \right|}}}}=
\\{\sum\limits_{\beta \in J_{r,n} {\left\{ {i} \right\}}}}\frac{{ \sum\limits_{t= 1}^{n}{{{\rm cdet}_{i}
{\left( {{\rm {\bf A}}_{.\,i}^{k+1} \left( {{\rm {\bf
a}}_{.t}}^{(k+m)} \right)} \right)_{\beta} ^{\beta} } }}}\cdot b_{tj}}{{\sum\limits_{\beta \in J_{r,n} } {{\left|
{\left( {{\rm {\bf A}}^{k+1} } \right)_{\beta} ^{\beta} } \right|}}}}=
  {\frac{{{\sum\limits_{\beta \in
J_{r,\,n} {\left\{ {i} \right\}}} {{{\rm cdet}_{i} \left( {{\rm
{\bf A}}^{k+1}_{\,.\,i} \left( {\widehat{{\rm
{\bf b}}}^{(k+m)}_{.j}} \right)} \right) {\kern 1pt} _{\beta}
^{\beta} }} } }}{{{\sum\limits_{\beta \in J_{r,\,\,n}}
{{\left| {\left( {{\rm {\bf A}}^{ k+1} } \right){\kern
1pt} {\kern 1pt} _{\beta} ^{\beta} }  \right|}}} }}}
\end{multline*}
for all $i={1,\ldots,n}$ and $m={1,\ldots,k}$.
 From this, it follows (\ref{eq:her_left_dif_repr}).
               \item[(ii)]The proof of (\ref{eq:arb_left_dif_repr}) is similar to the proof of (\ref{eq:her_left_dif_repr}) by using the determinantal representation of  ${\bf A}^{D}$ by  (\ref{eq:cdet_draz}).
             \end{itemize}

\end{proof}
For the left nonhomogeneous system
\begin{equation*}
{\bf x}^{'}(t) =  {\bf x}(t){\bf  A}+ {\bf b},
\end{equation*} where ${\bf A}\in {\mathbb{H}}^{n \times n}$ is singular and $Ind\,{\bf A}=k$,
we evidently obtain the following general solution and solution of the left initial problem, respectively,
\begin{gather}\label{eq:left__gen_dr}
{\bf x}(t)={\bf b}\left\{-{\bf A}^{D}+ ({\bf I}-{\bf A}{\bf A}^{D})t \left[{\bf I}+{\bf A}t+\frac{{\bf A}^{2}}{2!}t^{2}+\cdots+\frac{{\bf A}^{k-1}}{(k-1)!}t^{k-1}\right]+{\bf Ge^{{\bf A}t}}\right\},\\
{\bf x}(t)=\nonumber\\{\bf b}\left\{-{\bf A}^{D}+ ({\bf I}-{\bf A}{\bf A}^{D})t \left[{\bf I}+{\bf A}t+\frac{{\bf A}^{2}}{2!}t^{2}+\cdots+\frac{{\bf A}^{k-1}}{(k-1)!}t^{k-1}\right]+{\bf x}_{0}e^{{\bf A}(t-t_{0})}\right\}.\nonumber
\end{gather}
If we put ${ \bf G}={\bf 0}$, then we obtain the following  partial solution of (\ref{eq:left__gen_dr}),
  \begin{equation}\label{eq:left_dif_part_sol}
                                                 {\bf X}(t)=-{\bf b}{\bf A}^{D}+({\bf b}-{\bf b}{\bf A}{\bf A}^{D})t+\frac{1}{2}({\bf b}{\bf A}-{\bf b}{\bf A}^{2}{\bf A}^{D})t^{2}+...
                                                  \frac{1}{k!}({\bf b}{\bf A}^{k-1}-{\bf b}{\bf A}^{k}{\bf A}^{D})t^{k}.
                                             \end{equation}
\begin{theorem}If ${\bf A}\in{\rm {\mathbb{H}}}^{n \times n}$  has index $k$ and $\rank{\rm {\bf
A}}^{k + 1} = \rank{\rm {\bf A}}^{k}=r \le n$,  then the partial solution (\ref{eq:left_dif_part_sol}), ${\bf X}(t)=(x_{i}(t)
)$, possess the following determinantal representation,
\begin{itemize}
  \item[(i)]when  ${\bf A}\in{\rm {\mathbb{H}}}^{n \times n}$ is Hermitian,
                                    \begin{multline*}
 x_{i}=-
  {\frac{{{\sum\limits_{\alpha \in I_{r,n}
{\left\{ {i} \right\}}} {{{\rm rdet}_{i} \left( {{\rm
{\bf A}}^{k+1}_{i\,.} \left( {\check{{\rm
{\bf b}}}^{(k)}} \right)} \right){\kern 1pt}  _{\alpha} ^{\alpha}  }} } }}{{{\sum\limits_{\alpha \in I_{r,n} }
{{\left| {\left( {{\rm {\bf A}}^{ k+1} } \right){\kern
1pt} _{\alpha} ^{\alpha}  }  \right|}}} }}}+\\
 \left ({b_{i}- {\frac{{{\sum\limits_{\alpha \in I_{r,n}
{\left\{ {i} \right\}}} {{{\rm rdet}_{j} \left( {{\rm
{\bf A}}^{k+1}_{i\,.} \left( {\check{{\rm
{\bf b}}}^{(k+1)}} \right)} \right){\kern 1pt} _{\alpha} ^{\alpha}}} } }}{{{\sum\limits_{\alpha \in I_{r,n} } {{\left| {\left( {{\rm {\bf A}}^{k+1} } \right){\kern
1pt} _{\alpha} ^{\alpha}  }  \right|}}} }}}} \right)t+ \\
\frac{1}{2} \left ({\check{b}^{(1)}_{i}- {\frac{{{\sum\limits_{\alpha \in I_{r,n}
{\left\{ {i} \right\}}} {{{\rm rdet}_{i} \left( {{\rm
{\bf A}}^{k+1}_{i\,.} \left( {\check{{\rm
{\bf b}}}^{(k+2)}} \right)} \right){\kern 1pt} _{\alpha} ^{\alpha} }} } }}{{{\sum\limits_{\alpha \in I_{r,n} } {{\left| {\left( {{\rm {\bf A}}^{k+1} } \right){\kern
1pt}  _{\alpha} ^{\alpha}  }  \right|}}} }}}} \right)t^{2}+...\\+\frac{1}{k!}
\left ({\check{b}^{(k-1)}_{ij}- {\frac{{{\sum\limits_{\alpha \in I_{r,n}
{\left\{ {i} \right\}}} {{{\rm rdet}_{i} \left( {{\rm
{\bf A}}^{k+1}_{i\,.} \left( {\check{{\rm
{\bf b}}}^{(2k)}} \right)} \right) {\kern 1pt} _{\alpha} ^{\alpha} }} } }}{{{\sum\limits_{\alpha \in I_{r,n} }
{{\left| {\left( {{\rm {\bf A}}^{k+1} } \right){\kern
1pt} {\kern 1pt} _{\alpha} ^{\alpha} }  \right|}}} }}}} \right)t^{k},
\end{multline*}
where ${\rm {\bf b}}{\rm {\bf A}}^{l}=:\check{{{\rm
{\bf b}}}}^{(l)}
= (\check{b}^{(l)}_{i})\in {\mathbb{H}}^{1\times n}$ for all $l= {k,\ldots,2k}$;
  \item[(ii)]when ${\bf A}\in{\rm {\mathbb{H}}}^{n \times n}$ is arbitrary,
 \begin{multline*}
   x_{i}=
 {\frac{\sum\limits_{s = 1}^{n}\left({{\sum\limits_{\alpha \in I_{r,\,n} {\left\{ {s}
\right\}}} {{\rm{rdet}} _{s} \left( {\left( { {\bf A}^{ 2k+1} \left({\bf A}^{ 2k+1} \right)^{*}
} \right)_{\,. s} (\check{{\rm {\bf d}}}^{(0)})} \right) {\kern 1pt} _{\alpha} ^{\alpha} } }
}\right){a}_{si}^{(k)}}{{{\sum\limits_{\alpha \in I_{r,\,n}} {{\left| {\left( { {\bf A}^{ 2k+1} \left({\bf A}^{ 2k+1} \right)^{*}
} \right){\kern 1pt} _{\alpha} ^{\alpha}
}  \right|}}} }}}+\\
 \left ({b_{ij}-  {\frac{\sum\limits_{s = 1}^{n}\left({{\sum\limits_{\alpha \in I_{r,\,n} {\left\{ {s}
\right\}}} {{\rm{rdet}} _{s} \left( {\left( { {\bf A}^{ 2k+1} \left({\bf A}^{ 2k+1} \right)^{*}
} \right)_{\,. s} (\check{{\rm {\bf d}}}^{(1)})} \right) {\kern 1pt} _{\alpha} ^{\alpha} } }
}\right){a}_{si}^{(k)}}{{{\sum\limits_{\alpha \in I_{r,\,n}} {{\left| {\left( { {\bf A}^{ 2k+1} \left({\bf A}^{ 2k+1} \right)^{*}
} \right){\kern 1pt} _{\alpha} ^{\alpha}
}  \right|}}} }}}} \right)t+\\
\frac{1}{2} \left ({\check{b}^{(1)}_{i}-  {\frac{\sum\limits_{s = 1}^{n}\left({{\sum\limits_{\alpha \in I_{r,\,n} {\left\{ {s}
\right\}}} {{\rm{rdet}} _{s} \left( {\left( { {\bf A}^{ 2k+1} \left({\bf A}^{ 2k+1} \right)^{*}
} \right)_{\,. s} (\check{{\rm {\bf d}}}^{(2)})} \right) {\kern 1pt} _{\alpha} ^{\alpha} } }
}\right){a}_{si}^{(k)}}{{{\sum\limits_{\alpha \in I_{r,\,n}} {{\left| {\left( { {\bf A}^{ 2k+1} \left({\bf A}^{ 2k+1} \right)^{*}
} \right){\kern 1pt} _{\alpha} ^{\alpha}
}  \right|}}} }}}} \right)t^{2}+...\end{multline*}\begin{equation*}+\frac{1}{k!}
\left ({\check{b}^{(k-1)}_{i}-  {\frac{\sum\limits_{s = 1}^{n}\left({{\sum\limits_{\alpha \in I_{r,\,n} {\left\{ {s}
\right\}}} {{\rm{rdet}} _{s} \left( {\left( { {\bf A}^{ 2k+1} \left({\bf A}^{ 2k+1} \right)^{*}
} \right)_{\,. s} (\check{{\rm {\bf d}}}^{(k)})} \right) {\kern 1pt} _{\alpha} ^{\alpha} } }
}\right){a}_{si}^{(k)}}{{{\sum\limits_{\alpha \in I_{r,\,n}} {{\left| {\left( { {\bf A}^{ 2k+1} \left({\bf A}^{ 2k+1} \right)^{*}
} \right){\kern 1pt} _{\alpha} ^{\alpha}
}  \right|}}} }}}} \right)t^{k},
\end{equation*}
where ${\bf b}{\bf A}^{k+l}({\bf A}^{ 2k+1})^{*} =:
\check{{{\rm
{\bf D}}}}^{(l)}
= (\check{d}^{(l)}_{ij})\in {\mathbb{H}}^{1\times n}$ for all $l=
{1,\ldots,k}$ and
for all $i={1,\ldots,n}$.
\end{itemize}
\end{theorem}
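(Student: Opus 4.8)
The plan is to mirror, with columns and rows interchanged, the proof of Theorem~\ref{th:right_dif_part_sol}, using the row--determinant representations of the Drazin inverse, \eqref{eq:dr_rep_rdet} (Hermitian case) and \eqref{eq:rdet_draz} (general case), in place of the column--determinant ones. First I would read off from \eqref{eq:left_dif_part_sol} that the $i$th component of the partial solution is the polynomial
\[
x_{i}(t)=-({\bf b}{\bf A}^{D})_{i}+\sum_{m=1}^{k}\frac{1}{m!}\Bigl(\check{b}^{(m-1)}_{i}-({\bf b}{\bf A}^{m}{\bf A}^{D})_{i}\Bigr)t^{m},
\]
where $\check{b}^{(0)}_{i}:=b_{i}$ and $\check{b}^{(l)}_{i}$ is the $i$th entry of ${\bf b}{\bf A}^{l}$. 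The terms $\check{b}^{(m-1)}_{i}$ are already explicit, so it suffices to find, for $m=0,\ldots,k$, a determinantal representation of the row vector ${\bf b}{\bf A}^{m}{\bf A}^{D}$ and then substitute.

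To this end I would write $({\bf b}{\bf A}^{m}{\bf A}^{D})_{i}=\sum_{q=1}^{n}({\bf b}{\bf A}^{m})_{q}\,a^{D}_{qi}$ and insert the row--determinant formula for $a^{D}_{qi}$. In both \eqref{eq:dr_rep_rdet} and \eqref{eq:rdet_draz} the first index $q$ of $a^{D}_{qi}$ enters only through the distinguished row on which the row determinant acts --- the $q$th row ${\bf a}^{(k)}_{q.}$ of ${\bf A}^{k}$ in the Hermitian case, the $q$th row $\check{\bf a}_{q.}$ of ${\bf A}^{k}({\bf A}^{2k+1})^{*}$ in the general case --- whereas the output index $i$ sits in a factor (${\rm rdet}_{i}$, or a trailing $a^{(k)}_{si}$) that is independent of $q$. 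Since the row determinant ${\rm rdet}_{s}$ is left-linear in its $s$th row (in each summand of the defining expansion precisely one factor, the leftmost in its cycle, comes from that row), the scalars $({\bf b}{\bf A}^{m})_{q}$ may be drawn inside and absorbed into that row, and this remains valid within each principal submatrix $(\cdot)_{\alpha}^{\alpha}$ indexed by an $\alpha$ containing the distinguished index. Using $\sum_{q}({\bf b}{\bf A}^{m})_{q}\,{\bf a}^{(k)}_{q.}={\bf b}{\bf A}^{m+k}=\check{\bf b}^{(m+k)}$ and $\sum_{q}({\bf b}{\bf A}^{m})_{q}\,\check{\bf a}_{q.}={\bf b}{\bf A}^{m+k}({\bf A}^{2k+1})^{*}=\check{\bf d}^{(m)}$, one obtains
\[
({\bf b}{\bf A}^{m}{\bf A}^{D})_{i}=\frac{\sum_{\alpha\in I_{r,n}\{i\}}{\rm rdet}_{i}\bigl(({\bf A}^{k+1})_{i.}(\check{\bf b}^{(k+m)})\bigr)_{\alpha}^{\alpha}}{\sum_{\alpha\in I_{r,n}}\bigl|({\bf A}^{k+1})_{\alpha}^{\alpha}\bigr|}
\]
in the Hermitian case, and the analogous expression built from ${\bf A}^{2k+1}({\bf A}^{2k+1})^{*}$, $\check{\bf d}^{(m)}$ and the trailing factors $a^{(k)}_{si}$ in the general case.

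Finally I would substitute these formulas for $m=0,1,\ldots,k$ into the polynomial for $x_{i}(t)$: the value $m=0$ produces the constant term $-({\bf b}{\bf A}^{D})_{i}$ and the values $m=1,\ldots,k$ produce the coefficients of $t,\ldots,t^{k}$, which are exactly the two displayed determinantal representations. The point that needs care --- and which I expect to be the only real obstacle --- is the justification that ${\rm rdet}_{s}$ commutes with finite left-linear combinations in its distinguished row, also when restricted to an $\alpha$-principal submatrix; this follows from the structure of the expansion (cf.\ Theorem~\ref{theorem:row_combin}). In the non-Hermitian case one must additionally watch which factor of $a^{D}_{qi}$ carries the moving index $q$ --- it is the distinguished row $\check{\bf a}_{q.}$, not the trailing factor $a^{(k)}_{si}$ --- so that the left-linearity is applied in the right slot; the rest is the same bookkeeping as in the proof of Theorem~\ref{th:right_dif_part_sol}.
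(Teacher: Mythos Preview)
Your proposal is correct and follows essentially the same route as the paper: the paper's proof simply states that the argument is analogous to that of Theorem~\ref{th:right_dif_part_sol}, using the row--determinant representations \eqref{eq:dr_rep_rdet} and \eqref{eq:rdet_draz} of the Drazin inverse in place of the column versions. Your write-up is in fact more detailed than the paper's, making explicit the left-linearity of ${\rm rdet}_{s}$ in its distinguished row and the resulting absorption $\sum_{q}({\bf b}{\bf A}^{m})_{q}\,{\bf a}^{(k)}_{q.}=\check{\bf b}^{(k+m)}$ (respectively $\check{\bf d}^{(m)}$), which is exactly the step the paper carries out in the dual setting for Theorem~\ref{th:right_dif_part_sol}.
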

\begin{proof}The proof is similar to the proof of Theorem \ref{th:right_dif_part_sol} by using the determinantal representation of the Drazin inverse (\ref{eq:dr_rep_rdet}) for the case (i) and  (\ref{eq:rdet_draz}) for the case  (ii), respectively.
\end{proof}
\subsection{An example}
 Let
us consider the matrix equation
\begin{equation}\label{eq:ex_left_dif}
 {\bf x}'+ {\bf A}{\bf x}={\bf b},
\end{equation}
where\[{\bf A}=\begin{bmatrix}
  1 & k & -i \\
  -k & 2 & j \\
 i & -j & 1
\end{bmatrix},\,\, {\bf b}=\begin{bmatrix}
   j  \\
   -k  \\
  i
\end{bmatrix}.\]
Since ${\bf A}$ is Hermitian, ${\bf A}^{2}=\begin{bmatrix}
  3 & 4k & -3i \\
  -4k & 6 & 4j \\
 3i & -4j & 3
\end{bmatrix}$,
 $\det {\bf A}=\det {\bf A}^{2}=0$, and $\det \begin{bmatrix}
  1 & k  \\
  -k & 2
\end{bmatrix}=1$, $\det \begin{bmatrix}
  3 & 4k \\
  -4k & 6
\end{bmatrix}=2$, then $Ind\,{\bf A}=1$ and $r=\rank {\bf A}=2$. We shall find the solutions $\left(x_{i}\right)\in{\mathbb{H}}^{3\times 1}$ by (\ref{eq:her_left_dif_repr}),
\begin{equation*}
 x_{i}=
  {\frac{{{\sum\limits_{\beta \in
J_{2,\,3} {\left\{ {i} \right\}}} {{{\rm cdet}_{i} \left( {{\rm
{\bf A}}^{2}_{\,.\,i} \left( {\widehat{{\rm
{\bf b}}}^{(1)}} \right)} \right) {\kern 1pt} _{\beta}
^{\beta} }} } }}{{{\sum\limits_{\beta \in J_{2,\,3}}
{{\left| {\left( {{\rm {\bf A}}^{ 2} } \right){\kern
1pt} _{\beta} ^{\beta} }  \right|}}} }}}+
 \left ({b_{i}- {\frac{{{\sum\limits_{\beta \in
J_{2,\,3} {\left\{ {i} \right\}}} {{{\rm cdet}_{i} \left( {{\rm
{\bf A}}^{2}_{\,.\,i} \left( {\widehat{{\rm
{\bf b}}}^{(2)}} \right)} \right) {\kern 1pt} _{\beta}
^{\beta} }} } }}{{{\sum\limits_{\beta \in J_{2,\,3}}
{{\left| {\left( {{\rm {\bf A}}^{2} } \right){\kern
1pt} _{\beta} ^{\beta} }  \right|}}} }}}} \right)t\\
\end{equation*}
for all $i= {1,2,3}$. We have, ${{{\sum\limits_{\beta \in J_{2,\,3}} {{\left| {\left( {{\rm {\bf
A}}^{2}} \right) {\kern 1pt} _{\beta} ^{\beta} }
\right|}}} }}=4,$
\begin{equation*}\widehat{{
{\bf b}}}^{(1)}= {\bf A} {\bf b}=\begin{bmatrix}
  2+j  \\
   i-3k  \\
 2i+k
\end{bmatrix},\,\, \widehat{{
{\bf b}}}^{(2)}= {\bf A}^{2} {\bf b}=\begin{bmatrix}
 7+3j  \\
   4i-10k \\
 7i+3k
\end{bmatrix}.\end{equation*}
Therefore,
\begin{multline*}
x_{1}=\frac{1}{4}\left({\rm cdet}_{1}\begin{bmatrix}2+j & 4k \\
                                                      i-3k & 6
\end{bmatrix}+{\rm cdet}_{1}\begin{bmatrix}2+j & -3i \\
                                                      2i+k & 3
\end{bmatrix}\right)+\\\left(j-\frac{1}{4}\left({\rm cdet}_{1}\begin{bmatrix}7+3j & 4k \\
                                                      4i-10k & 6
\end{bmatrix}+{\rm cdet}_{1}\begin{bmatrix}7+3j & -3i \\
                                                      7i+3k & 3
\end{bmatrix}\right)\right)t=\\\frac{1}{4}(2j+0)+\left(j-\frac{1}{4}(2+2j+0)\right)t=0.5j+(-0.5+0.5j)t;\end{multline*}
\begin{multline*}
x_{2}=\frac{1}{4}\left({\rm cdet}_{2}\begin{bmatrix}3 & 2+j \\
                                                      -4k& i-3k
\end{bmatrix}+{\rm cdet}_{1}\begin{bmatrix}i-3k & 4j \\
                                                      2i+k & 3
\end{bmatrix}\right)+\\\left(-k-\frac{1}{4}\left({\rm cdet}_{2}\begin{bmatrix}3 & 7+3j \\
                                                      -4k & 4i-10k
\end{bmatrix}+{\rm cdet}_{1}\begin{bmatrix}4i-10k & 4j \\
                                                      7i+3k & 3
\end{bmatrix}\right)\right)t=-0.5i-0.5k;\end{multline*}
\begin{multline*}
x_{3}=\frac{1}{4}\left({\rm cdet}_{2}\begin{bmatrix}3 & 2+j \\
                                                      3i& 2i+k
\end{bmatrix}+{\rm cdet}_{2}\begin{bmatrix}6 & i-3k \\
                                                      -4j& 2i+k
\end{bmatrix}\right)+\\\left(i-\frac{1}{4}\left({\rm cdet}_{2}\begin{bmatrix}3 & 7+3j \\
                                                      3i& 7i+3k
\end{bmatrix}+{\rm cdet}_{1}\begin{bmatrix}6 & 4i-10k \\
                                                      -4j& 7i+3k
\end{bmatrix}\right)\right)t=\\0.5k+\left(0.5i -0.5k\right)t.
\end{multline*}
Note that we used Maple with the package CLIFFORD in the calculations.

\section{Conclusion}  A  basic theory on  first order right and left linear quaternion differential systems (LQDS) is considered in this paper. We adopted the theory of column-row determinants  for quaternion matrix   to proceed the theory of LQDS.
The algebraic structure of their general solutions are established.  Determinantal representations of solutions of  systems with constant coefficient matrices and sources vectors are obtained in both cases when coefficient matrices are invertible and singular. In the  last case, we use determinantal representations of the quaternion Drazin inverse   within the framework of the theory of column-row determinants.

The author declares that there is no conflict of interest regarding the publication of this paper.

\end{document}